\newcommand{\eee}{{\rm e}}
\newcommand{\me}{\mathbb{E}}
\newcommand{\mn}{\mathbb{N}}
\newcommand{\mr}{\mathbb{R}}
\DeclareMathOperator{\1}{\mathbbm{1}}
\newcommand{\mmp}{\mathbb{P}}
\newtheorem{thm}{Theorem}[section]
\newtheorem{lemma}[thm]{Lemma}
\newtheorem{assertion}[thm]{Proposition}
\theoremstyle{definition}
\theoremstyle{remark}
\newtheorem{rem}[thm]{Remark}
\begin{document}
\title{Precise tail behavior of some Dirichlet series}\date{}
\author{Alexander Iksanov\footnote{Faculty of Computer Science and Cybernetics, Taras Shevchenko National University of Kyiv, Ukraine; e-mail address:
iksan@univ.kiev.ua}
\ \ and \ \ Vitali Wachtel\footnote{Faculty of Mathematics, University of Bielefeld, Germany; e-mail address: wachtel@math.uni-bielefeld.de}}
\maketitle

\begin{abstract}
Let $\eta_1$, $\eta_2,\ldots$ be independent copies of a random variable $\eta$ with zero mean and finite variance which is 
bounded from the right, that is, $\eta\leq b$ almost surely for some $b>0$. Considering different types of the asymptotic behaviour of the probability $\mathbb{P}\{\eta\in[b-x,b]\}$ as $x\to 0+$ we derive precise tail asymptotics of the random Dirichlet series $\sum_{k\geq 1}k^{-\alpha}\eta_k$ for $\alpha\in (1/2, 1]$.
\end{abstract}

\noindent Key words: change of measure; local limit theorem; precise tail behavior; random Dirichlet series

\noindent 2020 Mathematics Subject Classification: Primary: 60G50, 60E99
\hphantom{2020 Mathematics Subject Classification: } Secondary: 60F05

\section{Introduction and main results}
Let $\eta_1$, $\eta_2,\ldots$ be independent copies of a random variable $\eta$ with zero mean and finite variance. By Kolmogorov's three-series theorem, the random Dirichlet series
$$
S(\alpha):=\sum_{k\geq 1}k^{-\alpha}\eta_k
$$
converges almost surely (a.s.) if, and only if, $\alpha>1/2$. Throughout the paper we additionally assume $\eta\leq b$ a.s.\ for some $b>0$. Our purpose is to find a precise (as opposed to logarithmic) asymptotic behavior of the distribution right tail of $S(\alpha)$, that is, $\mmp\{S(\alpha)>x\}$ as $x\to\infty$. If $\alpha>1$, then $S(\alpha)\leq b\zeta(\alpha)$ a.s., where $\zeta$ is the Riemann zeta function. Thus, in what follows our attention is restricted to the non-trivial case $\alpha\in (1/2, 1]$. At the end of Section \ref{sect:discussion} we discuss this case and the assumption $\eta\le b$ in more details. The other part of Section \ref{sect:discussion} is concerned with the complementary situation, in which the random variable $\eta$ is a.s.\ unbounded. 

Put $\psi(t):=\log \me [ \eee^{t\eta}]$ for $t\in\mr$. Under the present assumptions, the function $\psi$ is infinitely differentiable, strictly convex and strictly increasing on $[0,\infty)$. In particular, $\psi^\prime$ is positive and strictly increasing on $(0,\infty)$ and $\psi^{\prime\prime}$ is positive on $[0,\infty)$. Recall that, for $\rho\in (0,1]$, the Euler-Mascheroni constant $\gamma_\rho$ is defined by
\begin{equation}\label{eq:EulMas<1}
\gamma_\rho:=\lim_{n\to\infty}\Big(\sum_{k=1}^n k^{-\rho}-(1-\rho)^{-1}n^{1-\rho}\Big)
\end{equation}
if $\rho\in (0,1)$, and
\begin{equation}\label{eq:EulMas=1}
\gamma_1:=\lim_{n\to\infty}\Big(\sum_{k=1}^n k^{-1}-\log n\Big)
\end{equation}
if $\rho=1$.
\begin{thm}\label{thm:main1}
Assume that $\eta\leq b$ a.s.\ with $\mmp\{\eta=b\}=\theta\in (0,1)$, $\me[\eta]=0$ and $\me [\eta^2]<\infty$.

\noindent If $\alpha=1$, then, as $x\to\infty$,
$$
\mmp\{S(1)>x\}~\sim~\frac{1}{(2\pi\theta b)^{1/2}}\exp\Big(-\frac{x-q}{2b}-b\eee^{(x-q) 
/b}\Big),$$ where $$q=b\gamma_1+\int_0^1 x^{-1}\psi^\prime(x){\rm d}x+\int_1^\infty x^{-1}(\psi^\prime(x)-b){\rm d}x\in\mr.
$$

\noindent
If $\alpha\in (1/2, 1)$, then, as $x\to\infty$,
\begin{multline*}
\mmp\{S(\alpha)>x\}\\~\sim~\frac{1}{(2\pi\theta)^{1/2}}\Big(\frac{\alpha(\sigma_\alpha^2)^\alpha}{1-\alpha}\Big)^{1/(2(1-\alpha))}\frac{1}{x^{1/(2(1-\alpha))}}
\exp\Big(-\Big(\frac{1-\alpha}{(\alpha\sigma_\alpha^2)^\alpha}\Big)^{1/(1-\alpha)}(x-b\gamma_\alpha)^{1/(1-\alpha)}\Big),
\end{multline*}
where
$$
\sigma_\alpha^2:=\alpha^{-1}\int_0^\infty x^{1-1/\alpha}\psi^{\prime\prime}(x){\rm d}x=(1-\alpha)\alpha^{-3}\int_0^\infty x^{-1-1/\alpha}\psi(x){\rm d}x\in (0,\infty).
$$
\end{thm}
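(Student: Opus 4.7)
I would prove Theorem~\ref{thm:main1} by combining Cram\'er-type exponential tilting with a local limit theorem (LLT), in the Bahadur--Rao / Richter tradition adapted to the non-i.i.d.\ Dirichlet-series setting. Set $\Psi(\lambda) := \log\me[\eee^{\lambda S(\alpha)}] = \sum_{k\ge 1}\psi(\lambda k^{-\alpha})$ and, for each $\lambda>0$, introduce the tilted measure $\widetilde{\mmp}_\lambda$ defined by $\dd \widetilde{\mmp}_\lambda/\dd \mmp = \eee^{\lambda S(\alpha) - \Psi(\lambda)}$. Under $\widetilde{\mmp}_\lambda$ the $\eta_k$ remain independent --- each Esscher-transformed at level $\lambda k^{-\alpha}$ --- so that $\widetilde{\me}_\lambda[S(\alpha)] = \Psi'(\lambda)$ and $\var_{\widetilde{\mmp}_\lambda}S(\alpha) = \Psi''(\lambda)$. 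The Esscher identity
\[
\mmp\{S(\alpha)>x\} \;=\; \eee^{\Psi(\lambda) - \lambda x}\,\widetilde{\me}_\lambda\!\bigl[\eee^{-\lambda(S(\alpha)-x)};\, S(\alpha)>x\bigr]
\]
will be specialised at the saddle point $\lambda = \lambda(x)$ solving $\Psi'(\lambda(x)) = x$; the residual expectation is then evaluated via Laplace's method, using an LLT for $S(\alpha)$ under $\widetilde{\mmp}_{\lambda(x)}$.

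The second ingredient is an asymptotic expansion of $\Psi(\lambda)$ as $\lambda\to\infty$ sharp enough to retain the constant term. The crucial input is $\psi(t) = bt + \log\theta + o(1)$ as $t\to\infty$, which encodes the atom $\mmp\{\eta=b\}=\theta$. Using the Mellin transform $\tilde\psi(s) := \int_0^\infty \psi(u)\,u^{-s-1}\,\dd u$, convergent for $s\in(1,2)$ and continued meromorphically to a left-neighbourhood of $s=0$ with simple poles at $s=1$ (residue $b$) and $s=0$ (residue $\log\theta$), one has
\[
\Psi(\lambda) \;=\; \frac{1}{2\pi i}\int_{c-i\infty}^{c+i\infty}\zeta(\alpha s)\,\tilde\psi(s)\,\lambda^{s}\,\dd s,\qquad c\in(1/\alpha,2).
\]
Shifting the contour past the poles at $s=1/\alpha$ (from $\zeta(\alpha s)$), $s=1$ and $s=0$, and using $\zeta(0)=-1/2$ together with $\zeta(\alpha)=\gamma_\alpha$ for $\alpha\in(0,1)$, yields
\[
\Psi(\lambda) \;=\; \tfrac{\alpha^2\sigma_\alpha^2}{1-\alpha}\lambda^{1/\alpha} \,+\, b\gamma_\alpha\lambda \,-\, \tfrac12\log\theta \,+\, o(1),\qquad \alpha\in(1/2,1),
\]
and $\Psi(\lambda) = b\lambda\log\lambda + (q-b)\lambda - \tfrac12\log\theta + o(1)$ when $\alpha=1$ (the two simple poles at $s=1$ then merge into a double pole, producing the $\log\lambda$ factor and the constant $b\gamma_1$ in $q$). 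Inverting the saddle-point equation $\Psi'(\lambda(x))=x$ and computing the Legendre transform $I(x)=\lambda(x)x-\Psi(\lambda(x))$ reproduces precisely the exponents of the theorem, while the $-\tfrac12\log\theta$ contributes the factor $\theta^{-1/2}$ in the prefactor.

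Setting $\sigma_\lambda^2 := \Psi''(\lambda(x))$, the final step is the LLT: the density of $S(\alpha)-x$ at the origin under $\widetilde{\mmp}_{\lambda(x)}$ equals $(1+o(1))(2\pi\sigma_\lambda^2)^{-1/2}$ (or an appropriate smoothed analogue, should $\eta$ be arithmetic and $S(\alpha)$ fail to be absolutely continuous). I would establish this by Fourier analysis of
\[
\log\widetilde{\me}_{\lambda(x)}[\eee^{it(S(\alpha)-x)}] \;=\; \sum_{k\ge 1}\bigl[\psi(\lambda k^{-\alpha}+itk^{-\alpha}) - \psi(\lambda k^{-\alpha}) - itk^{-\alpha}\psi'(\lambda k^{-\alpha})\bigr],
\]
using a quadratic expansion at small $|t|$ together with block-wise bounds at medium and large $|t|$. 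Granting the LLT, Laplace's method gives $\widetilde{\me}_{\lambda(x)}[\eee^{-\lambda(S(\alpha)-x)};\,S(\alpha)>x]\sim(\lambda\sigma_\lambda\sqrt{2\pi})^{-1}$, valid because $\lambda\sigma_\lambda\to\infty$ so the exponential concentrates at scale $1/(\lambda\sigma_\lambda)\ll\sigma_\lambda$, and combining with the expansion of $\eee^{\Psi(\lambda(x))-\lambda(x)x}$ yields the theorem in both cases. The main obstacle is the LLT, because the tilted summands are drastically inhomogeneous --- for $k\ll\lambda^{1/\alpha}$ the tilted $\eta_k$ is nearly deterministic at $b$ with exponentially small variance, whereas for $k\gg\lambda^{1/\alpha}$ it is essentially the untilted $\eta$ --- so controlling the characteristic-function product uniformly in $t$ requires a careful split into low-, mid-, and high-frequency regimes, and handling any arithmetic structure in $\eta$ likely demands a smoothing argument (e.g.\ convolution with a small independent Gaussian) to avoid singular behaviour of the law of $S(\alpha)$.
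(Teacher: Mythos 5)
Your proposal is correct in outline and follows the same architecture as the paper: Esscher tilting, the saddle point $t(x)$ solving $\Psi'(t)=x$, an $o(1)$-precise expansion of the cumulant generating function, a local limit theorem under the tilted measure on the scale $1/\lambda$, and a Laplace evaluation of the residual expectation giving the prefactor $(\lambda\sigma_\lambda\sqrt{2\pi})^{-1}$ with $\sigma_\lambda^2=\Psi''(\lambda)\sim\sigma_\alpha^2\lambda^{1/\alpha-2}$. Your constants check out: the paper's Propositions \ref{prop:asymp1} and \ref{prop:first part1} combine to give exactly your $\Psi(\lambda)=\frac{\alpha^2\sigma_\alpha^2}{1-\alpha}\lambda^{1/\alpha}+b\gamma_\alpha\lambda-\frac12\log\theta+o(1)$ (and the $\alpha=1$ analogue), and your Stone-type LLT with the split at $k\asymp\lambda^{1/\alpha}$ is precisely the content of Theorem \ref{thm:Stone} and Lemma \ref{lem:bound at zero}. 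Where you genuinely diverge is in how the expansion of $\Psi$ is obtained: you use a Mellin--Barnes representation and contour shifting, which elegantly explains why $\gamma_\alpha=\zeta(\alpha)$ and $\zeta(0)=-\tfrac12$ (hence $\theta^{-1/2}$) appear; the paper instead applies Euler--Maclaurin summation separately to $\sum_k k^{-\alpha}\psi'(t/k^\alpha)$ and to $\sum_k(\psi(t/k^\alpha)-(t/k^\alpha)\psi'(t/k^\alpha))$, which is more pedestrian but needs only the qualitative remainder $\psi(u)=bu+\log\theta+o(1)$. This points to the one real caveat in your route: shifting the contour \emph{past} $s=0$ requires $\tilde\psi$ to continue to $\{\operatorname{Re}s<0\}$, i.e.\ $\psi(u)-bu-\log\theta=O(u^{-\epsilon})$, and the hypothesis $\mmp\{\eta=b\}=\theta$ gives only $o(1)$ (the remainder is $\log(1+\theta^{-1}\me[\eee^{-u(b-\eta)}\1_{\{\eta<b\}}])$, which can decay arbitrarily slowly). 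You would need either a supplementary dominated-convergence argument for the $s=0$ contribution or the Euler--Maclaurin route to recover the constant term in full generality. A second, minor point: the stated $o(1)$ expansion of $\Psi$ cannot simply be differentiated to locate $\lambda(x)$; you need a separate (if cruder) asymptotic for $\Psi'$, which convexity or a parallel Mellin computation supplies. The LLT, which you correctly identify as the main obstacle, the paper proves without invoking absolute continuity of $S(\alpha)$, by a Stone-type statement for intervals of length $h/t$; your proposed Gaussian smoothing would serve the same purpose.
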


\begin{thm}\label{thm:main2}
Assume that $\eta\leq b$ a.s., $\mmp\{b-\eta\leq x\}\sim \lambda x^r$ as $x\to 0+$ for positive $\lambda$ and $r$,  $\me[\eta]=0$ and $\me [\eta^2]<\infty$.

\noindent If $\alpha=1$, then, as $x\to\infty$, $$\mmp\{S(1)>x\}~\sim~\Big(\frac{(2\pi)^{r-1}}{\lambda\Gamma(r+1)b}\Big)^{1/2}\exp\Big(\frac{r-1}{2b}(x-q)-b\eee^{(x-q) 
/b}\Big)$$ with $q$ as defined in Theorem \ref{thm:main1}.

\noindent If $\alpha\in (1/2, 1)$, then, as $x\to\infty$,
\begin{multline*}
\mmp\{S(\alpha)>x\}~\sim~\Big(\frac{(2\pi)^{r\alpha-1}}{\lambda\Gamma(r+1)}\Big)^{1/2}\Big(\frac{(1-\alpha)^{r\alpha-1}}{\alpha^{r\alpha-1}(\sigma_\alpha^2)^{(r-1)\alpha}}\Big)^{1/(2(1-\alpha))}\exp\Big(\frac{r}{2}
\Big(\Big(\frac{\alpha\sigma_\alpha^2}{1-\alpha}\Big)^{\alpha/(1-\alpha)}-1\Big)\Big)\\
\times x^{(r\alpha-1)/(2(1-\alpha))}\exp\Big(-\Big(\frac{1-\alpha}{(\alpha\sigma_\alpha^2)^\alpha}\Big)^{1/(1-\alpha)}(x-b\gamma_\alpha)^{1/(1-\alpha)}\Big)
\end{multline*}
with 
$\sigma^2_\alpha$ as defined in Theorem \ref{thm:main1}.
\end{thm}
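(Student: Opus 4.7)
The plan is to adapt the exponential-tilting and saddle-point scheme used to prove Theorem~\ref{thm:main1} to the present polynomial tail hypothesis. The starting observation is that the leading behaviour of $\psi(t):=\log\me[\eee^{t\eta}]$ as $t\to\infty$ is still $\psi(t)\sim bt$, so the large-deviation rate function of $S(\alpha)$, and hence the exponential factor in the final asymptotics, should coincide with that of Theorem~\ref{thm:main1}. Writing $\me[\eee^{t\eta}]=\eee^{bt}\me[\eee^{-t(b-\eta)}]$ and applying Karamata's Tauberian theorem to the Laplace transform of $b-\eta$ under $\mmp\{b-\eta\leq x\}\sim\lambda x^r$ sharpens the expansion of $\psi$ to
\begin{equation*}
\psi(t)=bt-r\log t+\log(\lambda\Gamma(r+1))+o(1),\qquad \psi^\prime(t)=b-\frac{r}{t}+o\!\Big(\frac{1}{t}\Big),\qquad \psi^{\prime\prime}(t)=\frac{r}{t^2}+o\!\Big(\frac{1}{t^2}\Big)
\end{equation*}
as $t\to\infty$. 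The new $-r\log t$ term and the replacement $\log\theta\rightsquigarrow\log(\lambda\Gamma(r+1))$ are precisely what will drive the modified prefactor in Theorem~\ref{thm:main2}.

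Next I would plug these expansions into
\begin{equation*}
\Psi(t):=\sum_{k\geq 1}\psi(tk^{-\alpha}),\qquad \Psi^\prime(t)=\sum_{k\geq 1}k^{-\alpha}\psi^\prime(tk^{-\alpha}),\qquad \Psi^{\prime\prime}(t)=\sum_{k\geq 1}k^{-2\alpha}\psi^{\prime\prime}(tk^{-\alpha}),
\end{equation*}
and evaluate each sum via Euler--Maclaurin, splitting at $k\asymp t^{1/\alpha}$ (at $k\asymp t$ when $\alpha=1$): on the range where $tk^{-\alpha}\to\infty$ the refined expansions above apply, with \eqref{eq:EulMas<1}--\eqref{eq:EulMas=1} producing $\gamma_\alpha$ (resp.\ $\gamma_1$); on the complementary range a Riemann-sum comparison against the integrals in Theorem~\ref{thm:main1} introduces $\sigma_\alpha^2$. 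For $\alpha\in(1/2,1)$ this yields
\begin{equation*}
\Psi^\prime(t)=\frac{\alpha\sigma_\alpha^2}{1-\alpha}\,t^{(1-\alpha)/\alpha}+b\gamma_\alpha+o(1),\qquad \Psi^{\prime\prime}(t)\sim\frac{\sigma_\alpha^2}{1-\alpha}\,t^{(1-2\alpha)/\alpha},
\end{equation*}
with an analogous computation giving $\Psi^\prime(t)=b\log t+q+o(1)$ in the $\alpha=1$ case. Inverting $\Psi^\prime(t)=x$ furnishes the saddle point $t=t(x)$; the refined $\psi$ asymptotics contribute an additional factor $\sim t(x)^{-r}$ to $\eee^{\Psi(t)-tx}$ which, after substitution of $t(x)$, produces the extra power of $x$ that distinguishes Theorem~\ref{thm:main2} from Theorem~\ref{thm:main1}.

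With $t=t(x)$ fixed, I would perform the exponential change of measure ${\rm d}\mmp_t/{\rm d}\mmp=\exp(tS(\alpha)-\Psi(t))$, under which $S(\alpha)$ has mean $x$ and variance $\Psi^{\prime\prime}(t)$, and decompose
\begin{equation*}
\mmp\{S(\alpha)>x\}=\eee^{\Psi(t)-tx}\,\me_t\!\big[\eee^{-t(S(\alpha)-x)}\1_{\{S(\alpha)>x\}}\big].
\end{equation*}
A local limit theorem for $S(\alpha)$ under $\mmp_t$, asserting that its density near the mean is $\sim(2\pi\Psi^{\prime\prime}(t))^{-1/2}$, then reduces the expectation on the right-hand side to $(1+o(1))/(t\sqrt{2\pi\Psi^{\prime\prime}(t)})$, giving
\begin{equation*}
\mmp\{S(\alpha)>x\}\sim\frac{\eee^{\Psi(t)-tx}}{t\sqrt{2\pi\Psi^{\prime\prime}(t)}}.
\end{equation*}
Substituting the explicit expressions for $\Psi(t)-tx$, $\Psi^{\prime\prime}(t)$ and $t(x)$, and collecting the contributions of $-r\log t(x)$ and $\log(\lambda\Gamma(r+1))$, should then deliver the stated asymptotics in both the $\alpha=1$ and $\alpha\in(1/2,1)$ cases.

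The main technical hurdle is justifying the local limit theorem for $S(\alpha)$ under the tilted law $\mmp_t$ with $t=t(x)\to\infty$: the tilted summands $k^{-\alpha}\eta_k$ live on very different scales (large $k$ contributes a near-Gaussian bulk, small $k$ a heavily tilted tail), so the characteristic function of $S(\alpha)$ under $\mmp_t$ must be controlled uniformly in $t$ over the whole Fourier line, including in the intermediate and high-frequency regions where the arbitrary structure of $\eta$ on $(-\infty,b)$ leaves little room for clean bounds. A secondary but non-trivial source of care is the bookkeeping required in the Euler--Maclaurin expansions of $\Psi$, which must be carried to sufficient precision that the subtle prefactor constants---in particular the exponent $(r\alpha-1)/(2(1-\alpha))$ of $x$, the factor $((2\pi)^{r\alpha-1}/(\lambda\Gamma(r+1)))^{1/2}$ replacing $1/(2\pi\theta)^{1/2}$ in Theorem~\ref{thm:main1}, and the nested term $\exp\!\big(\tfrac{r}{2}\big(\big(\alpha\sigma_\alpha^2/(1-\alpha)\big)^{\alpha/(1-\alpha)}-1\big)\big)$---emerge exactly.
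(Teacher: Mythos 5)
Your overall strategy is exactly the paper's: exponential tilting, the saddle-point representation $\mmp\{S(\alpha)>x\}=\eee^{\Psi(t)-tx}\,\me^{(t)}[\eee^{-t(S(\alpha)-x)}\1_{\{S(\alpha)>x\}}]$ with $\Psi'(t)=x$, a Stone-type local limit theorem at scale $t^{-1}$ for the tilted sum, and the reduction of the expectation to $(t\sqrt{2\pi\Psi''(t)})^{-1}$. However, the quantitative part of your sketch has gaps precisely where Theorem \ref{thm:main2} differs from Theorem \ref{thm:main1}. First, the expansion $\Psi'(t)=\frac{\alpha\sigma_\alpha^2}{1-\alpha}t^{(1-\alpha)/\alpha}+b\gamma_\alpha+o(1)$ is far too crude: since the exponent of the final answer is $\Psi(t(x))-t(x)x$ and contains the term $-\alpha\sigma_\alpha^2\,(t(x))^{1/\alpha}$, an $o(1)$ error in the equation $\Psi'(t)=x$ propagates to an $o(t)$ error in $(t(x))^{1/\alpha}$, i.e.\ an unbounded error in the exponent. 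One must expand $\Psi'(t)$ to $o(1/t)$, and under the hypothesis $\mmp\{b-\eta\le x\}\sim\lambda x^r$ the Euler--Maclaurin boundary term $\tfrac12(\psi'(t)-b)=-\tfrac{r}{2t}+o(1/t)$ produces a genuine extra summand $rt^{-1}/2$ in $\Psi'(t)$ (Proposition \ref{prop:asymp2}); after inversion this shifts $(t(x))^{1/\alpha}$ by the constant $-\tfrac{r}{2(1-\alpha)}r_\alpha^{(2\alpha-1)/(1-\alpha)}$ and is exactly the origin of the factor $\exp\big(\tfrac{r}{2}(\alpha\sigma_\alpha^2/(1-\alpha))^{\alpha/(1-\alpha)}\big)$ in the statement. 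Your sketch, which absorbs this term into $o(1)$, cannot produce that constant.

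Second, your claim that the refined asymptotics of $\psi$ contribute ``an additional factor $\sim t(x)^{-r}$'' to $\eee^{\Psi(t)-tx}$ is incorrect in both sign and size. The $-r\log(t/k^\alpha)$ corrections must be summed over all $k\lesssim t^{1/\alpha}$ and compared with the corresponding integral; a Stirling-type cancellation (see the treatment of $C_2(t)$ in the proof of Proposition \ref{prop:first part2}) leaves $+(r/2)\log t$ in $\sum_k\big(\psi(t/k^\alpha)-(t/k^\alpha)\psi'(t/k^\alpha)\big)$, i.e.\ a factor $t(x)^{r/2}$, which combined with the $t(x)^{-1/(2\alpha)}$ from the local limit theorem yields the correct power $x^{(r\alpha-1)/(2(1-\alpha))}$; your $t(x)^{-r}$ would give the wrong power of $x$. (A smaller slip of the same kind: $\Psi''(t)\sim\sigma_\alpha^2t^{(1-2\alpha)/\alpha}$, without the factor $(1-\alpha)^{-1}$ you wrote, as one sees by differentiating your own formula for $\Psi'$; the extra factor would corrupt the prefactor by $(1-\alpha)^{-1/2}$.) You do flag the bookkeeping as delicate, but since the displayed expansions are the ones that would have to carry the new constants and they are either too coarse or wrong, the proposal as written does not reach the stated asymptotics.
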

\begin{rem}
Putting formally $r=0$ and $\lambda=\theta$ 
on the right-hand sides of the tail asymptotics of Theorem \ref{thm:main2} we obtain the right-hand sides of the tail asymptotics of Theorem \ref{thm:main1}, as it must be. Thus, in principle the two theorems could have been combined into a single result. The same remark also concerns Theorems \ref{thm:dens1} and \ref{thm:dens2} stated below. 
\end{rem}

The random variable $S(\alpha)$ can be seen as a special case of the series $\sum_{k\ge1}c_k\eta_k$, where $c_1$, $c_2,\ldots$ are real numbers. Perhaps, the most known representative of this family is called the Bernoulli convolution. It corresponds to $c_k=a^k$ with some $a\in(0,1)$ and $\eta$ having a Rademacher distribution. 
The main question for the Bernoulli convolutions is whether the distribution of the series $\sum_{k\ge 1}a^k\eta_k$ is absolutely continuous with respect to (w.r.t.) Lebesgue measure or not. It has long been known 
and is quite easy to prove 
that the distribution of $\sum_{k\ge 1}a^k\eta_k$ is singular continuous for every $a<1/2$. On the other hand, 
if $a=1/2$, then
$\sum_{k\ge 1}a^k\eta_k$ has a uniform distribution on $[-1,1]$. Solomyak~\cite{Solomyak:1995} has shown that the distribution of $\sum_{k\ge 1}a^k\eta_k$ is absolutely continuous for almost all, w.r.t.\ 
Lebesgue measure, $a\in(1/2,1)$.

It has been noticed by Yaskov~\cite{Yaskov:2011} that the results of Reich~\cite{Reich:1986} can be used to show 
that the distribution of $S(\alpha)$ is absolutely continuous whenever a random variable 
$\eta$ has zero mean and finite variance. Furthermore, there exists a smooth version of the density that we denote by 
$g_\alpha$. 
Assuming that $\eta$ has a Rademacher distribution Yaskov~\cite{Yaskov:2011} has found an asymptotic behaviour of $\log g_\alpha(x)$ as $x\to\infty$. 
Our approach 
allows us to determine a precise, rather than logarithmic, 
asymptotic of $g_\alpha$.
\begin{thm}\label{thm:dens1}
Assume that $\eta\leq b$ a.s.\ with $\mmp\{\eta=b\}=\theta\in (0,1)$, $\me[\eta]=0$ and $\me[\eta^2]<\infty$.

\noindent If $\alpha=1$, then, as $x\to\infty$,
$$
g_1(x)~\sim~\frac{1}{(2\pi\theta b)^{1/2}}
\exp\Big(\frac{x-q}{2b} 
-b\eee^{(x-q) 
/b}\Big)
$$
with $q$ as defined in Theorem \ref{thm:main1}.

\noindent
If $\alpha\in (1/2, 1)$, then, as $x\to\infty$,
\begin{multline*}
g_\alpha(x)\\~\sim~\frac{1}{(2\pi\theta)^{1/2}}\Big(\frac{(1-\alpha)^{2\alpha-1}}{\alpha^{2\alpha-1}(\sigma_\alpha^2)^\alpha}\Big)^{1/(2(1-\alpha))} 
x^{(2\alpha-1)/(2(1-\alpha))}\exp\Big(-\Big(\frac{1-\alpha}{(\alpha\sigma_\alpha^2)^\alpha}\Big)^{1/(1-\alpha)}(x-b\gamma_\alpha)^{1/(1-\alpha)}\Big) 
\end{multline*}
with $\sigma^2_\alpha$ as defined in Theorem \ref{thm:main1}.
\end{thm}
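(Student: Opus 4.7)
The plan is to adapt the exponential tilting (Cram\'er/Esscher) approach underlying Theorem \ref{thm:main1}, replacing the integration that produces a tail with a local limit theorem that produces a density. Set
\[
\Psi_\alpha(h):=\log \me[\eee^{hS(\alpha)}]=\sum_{k\ge1}\psi(hk^{-\alpha}),
\]
finite for every $h\ge 0$ since $\eta\le b$ almost surely. For $h>0$ let $\mmp_h$ denote the probability with density $\exp(hS(\alpha)-\Psi_\alpha(h))$ with respect to $\mmp$; under $\mmp_h$ the series $S(\alpha)$ has mean $\Psi_\alpha^\prime(h)$, variance $\Psi_\alpha^{\prime\prime}(h)$, and a density $g_{\alpha,h}$ related to the target by the pointwise identity $g_\alpha(x)=\eee^{\Psi_\alpha(h)-hx}g_{\alpha,h}(x)$. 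Solving the saddle-point equation $\Psi_\alpha^\prime(h)=x$ for $h=h(x)$ places $g_{\alpha,h(x)}$ at its own mean, whereupon a local central limit theorem is expected to give
\[
g_{\alpha,h(x)}(x)\sim\bigl(2\pi\Psi_\alpha^{\prime\prime}(h(x))\bigr)^{-1/2},\qquad x\to\infty,
\]
so that substituting back yields the claimed asymptotic for $g_\alpha$.

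The saddle-point bookkeeping---asymptotics of $\Psi_\alpha(h)$, $\Psi_\alpha^\prime(h)$, $\Psi_\alpha^{\prime\prime}(h)$ as $h\to\infty$ and inversion of $\Psi_\alpha^\prime(h)=x$---is essentially the calculation already performed for Theorem \ref{thm:main1} and can be imported. Under the hypothesis $\mmp\{\eta=b\}=\theta$ one has $\psi(t)=bt+\log\theta+o(1)$, $\psi^\prime(t)\to b$ and $\psi^{\prime\prime}(t)\to 0$ (exponentially fast) as $t\to\infty$. For $\alpha=1$ the series $\sum_k\psi(h/k)$, split according as $h/k$ is large or small and summed by Euler-Maclaurin, produces the constant $q$ and places $h(x)$ at exponential scale in $x$; assembly of the Gaussian prefactor $(2\pi\Psi_1^{\prime\prime}(h(x)))^{-1/2}$ with the tilted exponent $\Psi_1(h(x))-h(x)x$ then delivers the first claim of Theorem \ref{thm:dens1}. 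For $\alpha\in(1/2,1)$ the change of variable $u=hk^{-\alpha}$ combined with Euler-Maclaurin converts $\sum_k\psi(hk^{-\alpha})$ into $\alpha^{-1}h^{1/\alpha}\!\int_0^\infty u^{-1-1/\alpha}\psi(u)\,\dd u+b\gamma_\alpha h+o(1)$, the integral producing $\sigma_\alpha^2$; inverting the saddle-point equation yields $h(x)\sim\bigl((1-\alpha)(x-b\gamma_\alpha)/(\alpha\sigma_\alpha^2)\bigr)^{\alpha/(1-\alpha)}$, and a routine substitution delivers the algebraic factor $x^{(2\alpha-1)/(2(1-\alpha))}$ and the stated exponential.

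The main obstacle is the local limit theorem itself,
\[
g_{\alpha,h}\bigl(\Psi_\alpha^\prime(h)\bigr)\sim\frac{1}{\sqrt{2\pi \Psi_\alpha^{\prime\prime}(h)}},\qquad h\to\infty,
\]
because $S(\alpha)$ is an infinite sum of independent but non-identically distributed summands whose tilted marginals concentrate near $b$ at rates depending on $hk^{-\alpha}$. I would proceed via Fourier inversion,
\[
g_{\alpha,h}\bigl(\Psi_\alpha^\prime(h)\bigr)=\frac{1}{2\pi}\int_\mr \eee^{-i\xi\Psi_\alpha^\prime(h)}\prod_{k\ge1}\me_h\bigl[\eee^{i\xi k^{-\alpha}\eta_k}\bigr]\,\dd\xi,
\]
and split $\xi$ into three zones: (i) a small zone around the origin on which Taylor expansion of the log-integrand reproduces the Gaussian contribution $(2\pi\Psi_\alpha^{\prime\prime}(h))^{-1/2}$; (ii) an intermediate range on which the product decays exponentially, obtained from uniform non-lattice estimates on the characteristic functions of those tilted summands $k^{-\alpha}\eta_k$ with $hk^{-\alpha}$ of order one; and (iii) a far-field range absorbed using the integrability of the characteristic function of $S(\alpha)$ that follows from the smoothness of $g_\alpha$. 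Zone (ii) is the delicate step: as $h\to\infty$ the tilted marginals of small-index $\eta_k$ collapse to the atom at $b$, so one must isolate a robust block of indices whose tilted distributions retain genuine oscillation and show that their joint contribution uniformly dominates. Combining the resulting local limit theorem with the saddle-point asymptotics then yields both stated forms of $g_\alpha$.
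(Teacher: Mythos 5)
Your proposal follows essentially the same route as the paper: exponential tilting, inversion of the saddle-point equation $\Psi_\alpha'(h)=x$ with the same Euler--Maclaurin bookkeeping, and a Fourier-inversion local limit theorem for the tilted density at its mean with $\Psi_\alpha''(h)\sim\sigma_\alpha^2h^{-2+1/\alpha}$, where the delicate step you correctly identify (isolating a block of indices whose tilted marginals retain genuine oscillation) is exactly what the paper's Lemma \ref{lem:bound at zero} does for the indices $k\ge h^{1/\alpha}$. The only soft spot is your zone (iii): integrability of the characteristic function of $S(\alpha)$ under $\mmp$ does not by itself give the needed uniform-in-$h$ control of the \emph{tilted} characteristic function at large frequencies, but your zone-(ii) estimate extends to cover this (as in the paper's analysis of $I_2$, yielding a stretched-exponential bound), so this is a misattribution rather than a gap.
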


One of the earliest works on the distribution of $S(\alpha)$ is the paper \cite{Rice:1973} by Rice, who has studied the case $\alpha=1$ and $\mmp\{\eta=\pm1\}=1/2$. He has obtained a version of Theorem~\ref{thm:dens1} for this particular case. It is worth mentioning that his interest on $S(\alpha)$ was motivated by applications to digital communication systems.

\begin{thm}\label{thm:dens2}
Assume that $\eta\leq b$ a.s., $\mmp\{b-\eta\leq x\}\sim \lambda x^r$ as $x\to 0+$ for positive $\lambda$ and $r$,  $\me[\eta]=0$ and $\me [\eta^2]<\infty$.

\noindent If $\alpha=1$, then
$$
g_1(x)~\sim~\Big(\frac{(2\pi)^{r-1}}{\lambda\Gamma(r+1)b}\Big)^{1/2}\exp\Big(\frac{r+1}{2b}(x-q)-b\eee^{(x-q) 
/b}\Big),\quad x\to\infty.
$$ with $q$ as defined in Theorem \ref{thm:main1}.

\noindent If $\alpha\in (1/2, 1)$, then, as $x\to\infty$,
\begin{multline*}
g_\alpha(x)~\sim~
\Big(\frac{(2\pi)^{r\alpha-1}}{\lambda\Gamma(r+1)}\Big)^{1/2}\Big(\frac{(1-\alpha)^{(r+2)\alpha-1}}{\alpha^{(r+2)\alpha-1}(\sigma_\alpha^2)^{(r+1)\alpha}}\Big)^{1/(2(1-\alpha))} \exp\Big(\frac{r}{2}
\Big(\Big(\frac{\alpha\sigma_\alpha^2}{1-\alpha}\Big)^{\alpha/(1-\alpha)}-1\Big)\Big) 
\\\times x^{((r+2)\alpha-1)/(2(1-\alpha))} \exp\Big(-\Big(\frac{1-\alpha}{(\alpha\sigma_\alpha^2)^\alpha}\Big)^{1/(1-\alpha)}(x-b\gamma_\alpha)^{1/(1-\alpha)}\Big) 
\end{multline*}
with 
$\sigma^2_\alpha$ as defined in Theorem \ref{thm:main1}.
\end{thm}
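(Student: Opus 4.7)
The plan is to run the same exponential-tilting (Esscher) argument used for Theorem~\ref{thm:main2}, but with the integral central limit theorem replaced by its local (density) version; the exponential decay then emerges automatically, and only the algebraic prefactor has to be reworked.

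Set $\Psi(t):=\log\me[\eee^{tS(\alpha)}]=\sum_{k\ge1}\psi(t/k^\alpha)$ and, for $t>0$, define the tilted measure $\mmp^{(t)}$ by $\dd\mmp^{(t)}/\dd\mmp=\eee^{tS(\alpha)-\Psi(t)}$. The $\eta_k$ remain independent under $\mmp^{(t)}$, and writing $f_t$ for the density of $S(\alpha)$ under $\mmp^{(t)}$ yields the exact identity
$$
g_\alpha(x)=\eee^{\Psi(t)-tx}f_t(x).
$$
Fix $t=t(x)$ as the unique solution of $\Psi'(t)=x$, making $x$ the tilted mean. The analysis of $\psi$, $\Psi$ and $t(x)$ parallels Theorem~\ref{thm:main2}: from $\psi(t)=tb+\log\me[\eee^{-t(b-\eta)}]$ and a Laplace/Tauberian application of $\mmp\{b-\eta\le y\}\sim\lambda y^r$ one obtains
$$
\psi(t)=tb-r\log t+\log(\lambda\Gamma(r+1))+o(1),\qquad t\to\infty,
$$
together with analogous expansions of $\psi'$ and $\psi''$. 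Summing over $k\ge1$ with a split at $k\asymp t^{1/\alpha}$ (small-$k$ head by the expansion above, large-$k$ tail by $\psi(u)\sim\tfrac12\var(\eta)u^2$ near zero) produces asymptotics for $\Psi$, $\Psi'$ and $\Psi''$ in the constants $\sigma_\alpha^2$, $b$, $\gamma_\alpha$, $\lambda$, $r$; inverting $\Psi'(t)=x$ gives $t(x)$, and evaluating $\Psi(t(x))-t(x)x$ reproduces via the Legendre-transform bookkeeping already performed for Theorem~\ref{thm:main2} the exponential factor displayed in the statement.

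The genuinely new ingredient is the local limit theorem
$$
f_t(x)=\frac{1}{\sqrt{2\pi\Psi''(t)}}\bigl(1+o(1)\bigr),\qquad t\to\infty,
$$
which, combined with $\eee^{\Psi(t(x))-t(x)x}$, produces the algebraic prefactor $x^{((r+2)\alpha-1)/(2(1-\alpha))}$ (and its $\alpha=1$ analogue) in the statement. The proof proceeds by Fourier inversion: the tilted characteristic function equals $\phi_t(u)=\eee^{\Psi(t+iu)-\Psi(t)}$, and the $u$-integral is split into a central range $|u|\le\delta\sqrt{\Psi''(t)}$, where a second-order Taylor expansion around the saddle at $u=0$ gives the Gaussian $\eee^{-u^2\Psi''(t)/2}$; an intermediate range handled by the uniform strict convexity of $\psi$; and a tail $|u|\ge A$ that must be bounded uniformly in $t$. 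The hardest step is this last range: the tilt drives the distribution of $\eta_k$ for small $k$ toward the endpoint $b$, so Fourier decay could a priori deteriorate. The hypothesis $r>0$ excludes any atom at $b$ and, combined with the smoothness of $g_\alpha$ guaranteed by Reich's result cited in the introduction, should supply the required uniform-in-$t$ integrable decay of $\phi_t$ and close the argument.
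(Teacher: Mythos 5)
Your overall route is the same as the paper's: tilt by $\eee^{tS(\alpha)-\Psi(t)}$, choose $t=t(x)$ with $\Psi'(t)=x$, prove a local limit theorem giving $f_t(x)\sim(2\pi\Psi''(t))^{-1/2}$ with $\Psi''(t)\sim\sigma_\alpha^2t^{1/\alpha-2}$, and assemble the prefactor from the Legendre-transform expansions already established for Theorem~\ref{thm:main2}. The bookkeeping you sketch (including the exponent $((r+2)\alpha-1)/(2(1-\alpha))$, which is the tail exponent of Theorem~\ref{thm:main2} shifted by $\alpha/(1-\alpha)$ coming from the extra factor $t(x)$) is consistent with the paper, which indeed reduces the density asymptotics to $g_\alpha(x)\sim t(x)\,\mmp\{S(\alpha)>x\}$.

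However, the step you yourself flag as the hardest --- the uniform-in-$t$ integrable decay of the tilted characteristic function $\phi_t(u)$ for large $|u|$ --- is not actually proved, and the fix you propose does not work. Invoking $r>0$ to ``exclude an atom at $b$'' attacks the wrong difficulty: the companion result (Theorem~\ref{thm:dens1}) holds \emph{with} an atom at $b$, so atomlessness cannot be the mechanism; and Reich's theorem gives smoothness of the \emph{untilted} density $g_\alpha$, which does not transfer to a bound on $\phi_t$ that is uniform in $t$ (the tilt degrades each individual factor, exactly as you worry). The paper closes this gap differently and more robustly: in the product $\phi_t(u)=\prod_k|\me^{(t)}[\eee^{{\rm i}u k^{-\alpha}\eta_k}]|$ it simply \emph{discards} the strongly tilted factors and keeps only the indices $k\geq t^{1/\alpha}$, for which the tilt parameter $s=t/k^\alpha$ stays in $[0,1]$. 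Lemma~\ref{lem:bound at zero} then gives $\sup_{s\in[0,1]}|\me[\eee^{({\rm i}u+s)\eta}]|/\me[\eee^{s\eta}]\leq\eee^{-cu^2}$ for $|u|\leq\rho$, uniformly in $s$, using only $\eta\le b$, $\me[\eta]=0$ and $\me[\eta^2]\in(0,\infty)$. Multiplying over $k\geq t^{1/\alpha}$ (and, for $|u|>\rho t^{1/(2\alpha)}$, over $k\geq(\rho^{-1}|u|t^{1-1/(2\alpha)})^{1/\alpha}$) yields a Gaussian bound on the central range and a stretched-exponential bound $\eee^{-c|u|^{1/\alpha}}$ on the far tail, which is exactly the uniform integrable majorant your Fourier inversion needs. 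Without this (or an equivalent) device your proof is incomplete at its decisive point.
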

\begin{rem}
Of course, Theorems \ref{thm:main1} and \ref{thm:main2} can be deduced from Theorems \ref{thm:dens1} and \ref{thm:dens2} with the help of the L'H\^opital rule. However, we believe our present approach has its own merits and may be used for investigating precise distributional tail behavior of other infinite weighted sums of independent identically distributed random variables. Its main advantage is that absolute continuity of the distribution of a sum is not a priori required.
\end{rem}

\section{A look from a broader perspective}\label{sect:discussion}

We start by discussing the situation, in which $\mmp\{\eta>y\}>0$ for all $y>0$. 
Assume first that 
some exponential moments of positive orders are finite, that is,
$$
t_0:=\sup\{t>0:\me[\eee^{t\eta}]<\infty\}\in(0,\infty).
$$
Set $S'(\alpha):=\sum_{k\ge 2}k^{-\alpha}\eta_k$. It is clear that
$$
\me[\eee^{tS'(\alpha)}]<\infty
\quad\text{for all }~ 0\leq t<2^\alpha t_0.
$$
Therefore, the distribution tail of $S'(\alpha)$ is lighter than that of
$\eta$. This ensures 
that the distribution tail of $S(\alpha)$ is proportional to the distribution tail of $\eta$. We 
illustrate this observation 
under the assumption that, 
for some $a\in\mathbb{R}$, 
$$
\mmp\{\eta>x\}~\sim~ x^a\eee^{-t_0x},\quad x\to\infty.
$$
Indeed, by the total probability formula,
$$
\frac{\mmp\{S(\alpha)>x\}}{\mmp\{\eta 
>x\}}
=\int_\mathbb{R}\frac{\mmp\{\eta 
>x-y\}}{\mmp\{\eta 
>x\}}
\mmp\{S'(\alpha)\in {\rm d}y\}.
$$
Noting that
$\lim_{x\to\infty}\frac{\mmp\{\eta 
>x-y\}}{\mmp\{\eta 
>x\}}= \eee^{t_0y}$ for every fixed $y\in\mr$ and using Lebesgue's dominated convergence theorem, we conclude that
$$
\lim_{x\to\infty}
\frac{\mmp\{S(\alpha)>x\}}{\mmp\{\eta 
>x\}}
=\int_\mathbb{R}\eee^{t_0y}
\mmp\{S'(\alpha)\in {\rm d}y\}\in(0,\infty).
$$

A similar relation holds true 
in the case where the distribution tail of $\eta$ is heavy, that is, 
$t_0=0$. If, for example, the function $x\mapsto \mmp\{\eta>x\}$ is regularly varying at $\infty$ of index $-\theta$ for $\theta>2$, then
$$
\lim_{x\to\infty}
\frac{\mmp\{S(\alpha)>x\}}{\mmp\{\eta 
>x\}}
=\lim_{x\to\infty}
\frac{\mmp\{\eta_k>xk^{\alpha}\text{ for some }k\ge 1\}}{\mmp\{\eta 
>x\}}
=\sum_{k\geq 1} k^{-\alpha \theta 
}.
$$
These equalities can be proven with the help of arguments which are standard for heavy-tailed distributions. The driving force behind the aforementioned situations is 
the classical `one big jump strategy', according to which a large value of the sum
$S(\alpha)$ is caused by a large value of a single summand.

If the moment generating function $t\mapsto \me[\eee^{t\eta}]$ is finite for all $t>0$, 
it is still possible that the distribution tail of $S(\alpha)$ is of the same type as the distribution tail of $\eta$. Here is a rather simple example. 
Assume that $\eta$ is normally distributed with zero mean and unit variance. The sum 
$S(\alpha)$ is then also normally distributed with zero mean and variance
$\theta^2_\alpha:=\sum_{k\ge1}k^{-2\alpha}$. In particular, 
$$
\mmp\{S(\alpha)>x\}=\mmp\{\eta>x/\theta_\alpha\}, \quad x\in\mr. 
$$
Actually, a weaker version of this equality holds true 
for a much wider class of distributions. Assume that
\begin{equation}
\label{eq:mgf-log}
\psi(t)=\log\me[\eee^{t\eta}]\sim \gamma t^{p},\quad t\to\infty
\end{equation}
for some $\gamma>0$ and $p>1$.
If $\alpha>1/p$, then 
$$
\log\me[\eee^{tS(\alpha)}]
=\sum_{k\ge1}\psi(tk^{-\alpha})~\sim~ \gamma t^p\sum_{k\ge1}k^{-\alpha p},\quad t\to\infty.
$$
Applying now the Kasahara Tauberian theorem (see Theorem 4.12.7 in \cite{Bingham+Goldie+Teugels:1989}), we conclude that
\begin{equation}\label{eq:log-equiv}
\log\mmp\{S(\alpha)>x\}~\sim~\log\mmp\{\eta>x/(\sum_{k\ge1}k^{-\alpha p})^{1/p}\}, \quad x\to\infty.
\end{equation}
In other words, the distribution tails of $S(\alpha)$ and $\eta$ are proportional to each other on the logarithmic scale provided that $\alpha>1/p$. This proportionality disappears for
$\alpha\le 1/p$. More precise versions of \eqref{eq:log-equiv} (with the logarithms omitted) 
show up in the tail asymptotics of series $\sum_{k\ge 1}c_k\eta_k$ with $(c_k)_{k\geq 1}$ being a summable sequence.
This case has been investigated by Rootz\'en~\cite{Rootzen:1986,Rootzen:1987} and 
Kl\"uppelberg and Lindner~\cite{KlueppelbergLindner:2005}.

The proportionality arising in \eqref{eq:log-equiv} disappears for $\alpha\le 1/p$. Indeed, assume, for instance, that 
\eqref{eq:mgf-log} holds and that $\alpha<$ 
$1/p$. It can be checked along the lines of the proofs of Propositions~\ref{prop:first part1} and \ref{prop:first part2} below that   
$$
\log \me [\eee^{tS(\alpha)}] 
~\sim~\frac{t^{1/\alpha}}{\alpha}
\int_0^\infty v^{-1-1/\alpha}\psi(v){\rm d}v,\quad t\to\infty,
$$
but we omit details. 
Since $1/(1-\alpha)<p/(p-1)$, 
the Kasahara Tauberian theorem implies that
$$
-\log\mmp\{S(\alpha)>x\}~\sim~ C_1 x^{1/(1-\alpha)}
\ll C_2 x^{p/(p-1)}~\sim~ - 
\log\mmp\{\eta>x\},\quad x\to\infty
$$
for appropriate positive constants $C_1$ and $C_2$. The first limit relation 
agrees with the precise (rather than logarithmic) tail behavior exhibited in Theorems~\ref{thm:main1} and \ref{thm:main2} for $\alpha\in (1/2,1)$.

Plainly, 
relation \eqref{eq:mgf-log} is not sufficient for obtaining a precise asymptotic. To succeed,  
more 
information on the function $\psi$ is needed. Under our standing assumption 
$\eta\le b$ a.s., 
$\psi(t)\sim bt$ as $t\to\infty$. Further terms in the asymptotic expansion for $\psi$ depend on the behaviour of probabilities $\mmp\{\eta\in[b-\delta,b]\}$ as $\delta\to 0+$. This fact justifies our assumptions in Theorems~\ref{thm:main1} and \ref{thm:main2}. It will be clear from the proofs that our argument still applies whenever  
$$
\psi''(t)=t^{p-2}L(t)+o(t^{-2}),\quad t\to\infty
$$
for some $p\le1/\alpha$ and some $L$ slowly varying at infinity. 
Summarizing, the assumed 
boundedness of $\eta$ from the right admits a simple link between the asymptotics of  
$\mmp\{\eta\in[b-\delta,b]\}$ as $\delta\to 0+$ and 
$\psi(t)$ as $t\to\infty$ and does not lead to a significant restriction of generality.
\section{Description of our approach to the tail asymptotics}\label{sec:structure}

We first perform the 
exponential change of measure, which is standard in the area of large deviations. More precisely,
for each $t>0$, we define a new probability measure $\mmp^{(t)}$ by
\begin{equation}\label{eq:change2}
\me^{(t)}[g(\eta_1,\ldots, \eta_k)]=\frac{\me [\eee^{tS(\alpha)}g(\eta_1,\ldots,\eta_k)]}{\me [\eee^{tS(\alpha)}]},
\end{equation}
where $\me^{(t)}$ denotes expectation with respect to $\mmp^{(t)}$. The equality is assumed to hold for all $k\in\mn$ and each bounded Borel function $g:\mr^k\to\mr$. Let $f:\mr\to\mr$ be any bounded Borel function. Putting $g(y_1,\ldots, y_k)=f\big(\sum_{j=1}^k j^{-\alpha} y_j\big)$ for $(y_1,\ldots, y_k)\in\mr^k$ and letting $k\to\infty$ we infer
\begin{equation}\label{eq:change1}
\me^{(t)}[f(S(\alpha))]=\frac{\me [\eee^{tS(\alpha)}f(S(\alpha))]}{\me [\eee^{tS(\alpha)}]}.
\end{equation}
Equality \eqref{eq:change1} holds true for any Borel function $f:\mr\to\mr$ which is not necessarily bounded, whenever the left- or right-hand side of \eqref{eq:change1} is well-defined, possibly infinite.

Recall the notation $\psi(t)=\log \me [ \eee^{t\eta}]$ for $t\in\mr$. Then $\me [\eee^{tS(\alpha)}]=\exp(\sum_{k\geq 1}\psi(t/k^\alpha))$ for $t\in\mr$. Using \eqref{eq:change1} with $f(y)=y$ we obtain $$\me^{(t)}[S(\alpha)]=\frac{\me [\eee^{tS(\alpha)}S(\alpha)]}{\me [\eee^{tS(\alpha)}]}=\sum_{k\geq 1}k^{-\alpha}\psi^\prime(t/k^\alpha).$$ Fix any $x>0$ and put $f(y)=\me \eee^{-tS(\alpha)}\1_{(x,\infty)}(y)$. Then \eqref{eq:change1} reads $$\mmp\{S(\alpha)>x\}=\me [\eee^{tS(\alpha)}]\me^{(t)}[\eee^{-tS(\alpha)}\1_{\{S(\alpha)>x\}}]=\eee^{-tx+\sum_{k\geq 1}\psi(t/k^\alpha)}\me^{(t)}[\eee^{-t(S(\alpha)-x)}\1_{\{S(\alpha)-x>0\}}].$$ Under $\mmp^{(t)}$, put $S_0^{(t)}(\alpha)=S(\alpha)-\me^{(t)}[S(\alpha)]$. We shall write $S_0(\alpha)$ for $S_0^{(t)}(\alpha)$ unless it leads to ambiguity. The function $t\mapsto \sum_{k\geq 1}k^{-\alpha}\psi^\prime(t/k^\alpha)$ is continuous, strictly increasing on $[0,\infty)$ and equal to $0$ at $0$. Hence, for each $x\geq 0$, the equation
\begin{equation}
\label{eq:t(x)}
\sum_{k\geq 1}k^{-\alpha}\psi^\prime(t/k^\alpha)=x
\end{equation}
has a unique solution $t=t(x)$. We shall investigate the asymptotic behavior of $\mmp\{S(\alpha)>x\}$ as $x\to\infty$ with the help of the representation
\begin{multline}\label{eq:repr}
\mmp\{S(\alpha)>x\}\\=\exp\Big\{\sum_{k\geq 1}(\psi(t(x)/k^\alpha)-(t(x)/k^\alpha)\psi^\prime(t(x)/k^\alpha))\Big\}\me^{(t(x))}[\eee^{-t(x)S_0(\alpha)}\1_{\{S_0(\alpha)>0\}}].
\end{multline}

Our analysis of the terms on the right-hand 
side of \eqref{eq:repr} consists of the following four steps.

\noindent {\sc Step 1}. Find an asymptotic expansion of
$\me^{(t)}[S(\alpha)]=\sum_{k\geq 1}k^{-\alpha}\psi^\prime(t/k^\alpha)$ as $t\to\infty$ with a sufficient precision. It turns out that an expansion up to the term $o(1/t)$ serves our purpose. 
Use the 
expansion to determine the asymptotic behaviour of the solution $t(x)$ to 
equation \eqref{eq:t(x)}. This step is implemented in Propositions \ref{prop:asymp1} and \ref{prop:asymp2}.

\noindent {\sc Step 2}. Find an asymptotic expansion of $\sum_{k\geq 1}(\psi(t/k^\alpha)-(t/k^\alpha)\psi^\prime(t/k^\alpha))$ as $t\to\infty$ up to the term $o(1)$. The $o(1)$-precision is needed to obtain precise asymptotics for the exponential term in \eqref{eq:repr}. This is done in Propositions \ref{prop:first part1} and \ref{prop:first part2}.

\noindent {\sc Step 3}. Find the first-order asymptotic of $\me^{(t)}[\eee^{-tS_0(\alpha)}\1_{\{S_0(\alpha)>0\}}]$ as $t\to\infty$. To this end we prove a local Central Limit Theorem (CLT) for $S_0(\alpha)$ under $\mmp^{(t)}$. We note 
that the use of CLT-like results is also a very common tool in the study of large deviation probabilities. For example, 
one applies the Berry-Esseen inequality to derive an exact large deviation asymptotic 
for sums of independent random variables, see Chapter VIII in Petrov's book \cite{Petrov:1975}. Surprisingly, it turned out that the application of the Berry-Esseen inequality to
$S(\alpha)$ does not allow one to determine the asymptotics of $\me^{(t)}[\eee^{-tS_0(\alpha)}\1_{\{S_0(\alpha)>0\}}]$.
A further peculiarity of $S(\alpha))$ consists in the fact that its variance under $\mmp^{(t)}$ goes to zero and, consequently, one has a kind of 'superconcentration' effect around $\me^{(t)}[S(\alpha)]$. \newline
The local CLT is proven in Theorem~\ref{thm:Stone}. The expectation
$\me^{(t)}[\eee^{-tS_0(\alpha)}\1_{\{S_0(\alpha)>0\}}]$ is analysed in Proposition \ref{prop:second part}.

\noindent {\sc Step 4}. To conclude, replace $t$ with $t(x)$ in the asymptotic expansions obtained at Steps 2 and 3. In this way both Theorems \ref{thm:main1} and \ref{thm:main2} follow.

\section{Preparation for the proofs}

\subsection{A version of the Euler-Maclaurin formula}

There are several versions of the Euler-Maclaurin summation formula. Below we state the one that serves our needs.

Let $m,n\in\mn$, $m<n$ and $f: [m,n]\to \mr$ be a twice continuously differentiable function. A specialization of formula (9.78) on p.~460 in \cite{Graham+Knuth+Patashnik:1990} yields
\begin{equation}\label{eq:EulMas finite}
\sum_{j=m}^n f(j)=\int_m^n f(x){\rm d}x+(f(n)+f(m))/2+(f^\prime(n)-f^\prime(m))/12+R_{m,n},
\end{equation}
where $|R_{m,n}|\leq (1/12)\int_m^n |f^{\prime\prime}(x)|{\rm d}x$.

If $f: [m,\infty)\to \mr$ is a twice continuously differentiable function with $\lim_{x\to\infty}f(x)=\lim_{x\to\infty}f^\prime(x)=0$ and $\int_m^\infty |f^{\prime\prime}(x)|{\rm d}x<\infty$, then
\begin{equation}\label{eq:EulMas infinite}
\sum_{j\geq m} f(j)=\int_m^\infty f(x){\rm d}x+ (f(m))/2- (f^\prime(m))/12+R_m,
\end{equation}
where $|R_{m}|\leq (1/12)\int_m^\infty |f^{\prime\prime}(x)|{\rm d}x$.

\subsection{Auxiliary results}


\begin{lemma}\label{lem:aux1}
Assume that $\eta\leq b$ a.s.\ and $\mmp\{\eta=b\}=\theta\in (0,1)$. Then, as $t\to\infty$,

\noindent (a) $\psi(t)=bt+\log \theta+o(1)$;

\noindent (b) $t\psi^\prime(t)=bt+o(1)$ and $L^\prime(t)=o(1/t)$, where $L(t):=\log \me [\eee^{-t(b-\eta)}]$ for $t\geq 0$;

\noindent (c) $\lim_{t\to\infty} t^2 \psi^{\prime\prime}(t)=\lim_{t\to\infty} t^3 |\psi^{\prime\prime\prime}(t)|=0$.
\end{lemma}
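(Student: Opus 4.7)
My plan is to factor out the boundary behaviour by setting $Y:=b-\eta\geq 0$, so that $\mmp\{Y=0\}=\theta$ and $\psi(t)=bt+L(t)$. All three parts will then follow from asymptotic control of the moments
\[
m_k(t):=\me[Y^k\eee^{-tY}],\qquad k\geq 0,
\]
as $t\to\infty$. For part (a), I will use that $\eee^{-tY}\leq 1$ and $\eee^{-tY}\to \1_{\{Y=0\}}$ a.s., so dominated convergence gives $m_0(t)\to\theta$ and hence $\psi(t)-bt=L(t)=\log m_0(t)\to\log\theta$.

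For part (b), I will use $\psi'(t)=b-m_1(t)/m_0(t)$ and $L'(t)=-m_1(t)/m_0(t)$. The key observation is that $(tY)\eee^{-tY}\leq \eee^{-1}$ and tends to $0$ pointwise on $\{Y>0\}$, so dominated convergence yields $tm_1(t)\to 0$. Combined with $m_0(t)\to\theta>0$ this gives both $t\psi'(t)=bt+o(1)$ and $tL'(t)\to 0$, i.e.\ $L'(t)=o(1/t)$.

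For part (c), one further differentiation gives $\psi''(t)=L''(t)=m_2/m_0-(m_1/m_0)^2$. The same trick applies: $t^2m_2(t)=\me[(tY)^2\eee^{-tY}]\to 0$ (integrand bounded by $4\eee^{-2}$, zero in the pointwise limit on $\{Y>0\}$), and so $t^2\psi''(t)\to 0$. For the third derivative a routine calculation yields
\[
L'''(t)=-\frac{m_3(t)}{m_0(t)}+\frac{3m_1(t)m_2(t)}{m_0(t)^2}-\frac{2m_1(t)^3}{m_0(t)^3};
\]
the leading term is handled by $t^3m_3(t)\to 0$ as before, while $t^3m_1m_2/m_0^2=(tm_1)(t^2m_2)/m_0^2$ and $t^3m_1^3/m_0^3=(tm_1)^3/m_0^3$ vanish because each factor $t^jm_j(t)$ already tends to $0$. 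The whole argument is really just a bundle of applications of dominated convergence to $(tY)^k\eee^{-tY}$, which is uniformly bounded by $k^k\eee^{-k}$ and converges to $0$ a.s.\ on $\{Y>0\}$; I do not anticipate any real obstacle, the only point worth flagging is that the atom at $Y=0$, which is what prevents $m_0(t)$ from tending to zero, is filtered out of every $m_k$ with $k\geq 1$ by the factor $Y^k$, and this is precisely what keeps all the ratios $m_k/m_0^\ell$ harmless.
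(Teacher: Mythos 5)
Your proposal is correct and follows essentially the same route as the paper: with $\ell(t)=\me[\eee^{-t(b-\eta)}]$ one has $m_k(t)=(-1)^k\ell^{(k)}(t)$, and the paper likewise proves $t^n|\ell^{(n)}(t)|\to 0$ by dominated convergence (boundedness of $x\mapsto x^n\eee^{-x}$) together with $\ell(t)\to\theta>0$, then expands $\psi''$ and $\psi'''$ as ratios of these quantities exactly as you do.
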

\begin{proof}
(a) This is justified as follows $$\psi(t)=bt+\log \me\big[\eee^{-t(b-\eta)}\big]=bt+\log \theta+\log\big(1+\theta^{-1}\me\big[\eee^{-t(b-\eta)}\1_{\{\eta<b\}}\big]\big)=bt+\log\theta+o(1),\quad t\to\infty.$$ The last equality stems 
from the fact that $\lim_{t\to\infty}\me \big[\eee^{-t(b-\eta)}\1_{\{\eta<b\}}\big]=0$.

\noindent (b) Put $\ell(t):=\me [\eee^{-t(b-\eta)}]$ for $t\geq 0$. Then $t\psi^\prime(t)=bt+t\ell^\prime(t)/\ell(t)=bt+tL^\prime(t)$. By Lebesgue's dominated convergence theorem, for $n\in\mn$,
\begin{equation}\label{eq:conv to zero}
t^n\big|\ell^{(n)}(t)\big|=\me [(t(b-\eta))^n \eee^{-t(b-\eta)}]~\to~ 0,\quad t\to\infty,
\end{equation}
where $\ell^{(n)}$ denotes the $n$th derivative of the function $\ell$. Indeed, $\lim_{t\to\infty}(t(b-\eta))^n \eee^{-t(b-\eta)}=0$ a.s., and the function $x\mapsto x^n\eee^{-x}$ is bounded on $[0,\infty)$. Since
\begin{equation}\label{eq:gamma}
\lim_{t\to\infty}\ell(t)=\theta,
\end{equation}
the claims of part (b) follow from \eqref{eq:conv to zero} with $n=1$.

\noindent (c) The proof is analogous to that of part (b). We only treat the third derivative. Since
\begin{equation}\label{eq:third}
\psi^{\prime\prime\prime}(t)=\frac{\ell^{\prime\prime\prime}(t)\ell(t)-\ell^\prime(t)\ell^{\prime\prime}(t)}{(\ell(t))^2}-\frac{2\ell^\prime(t)}{\ell(t)}\frac{\ell^{\prime\prime}(t)\ell(t)-(\ell^\prime(t))^2}{(\ell(t))^2},
\end{equation}
the result is secured by 
\eqref{eq:conv to zero} with $n=1,2,3$ and \eqref{eq:gamma}.
\end{proof}

\begin{lemma}\label{lem:aux2}
Assume that $\eta\leq b$ a.s.\ and $\mmp\{b-\eta\leq x\}\sim \lambda x^r$ as $x\to 0+$ for positive $\lambda$ and $r$. Then, as $t\to\infty$,

\noindent (a) $\psi(t)=bt-r\log t+\log\left(\lambda\Gamma(r+1)\right)+o(1)$;

\noindent (b) $L^\prime(t)=-rt^{-1}+o(1/t)$ and $\psi^\prime(t)=b-rt^{-1}+o(1/t)$;

\noindent (c) $\psi^{\prime\prime}(t)=L^{\prime\prime}(t)=rt^{-2}+o(1/t^2)$ and $\psi^{\prime\prime\prime}(t)=L^{\prime\prime\prime}(t)=-2rt^{-3}+o(1/t^3)$.
\end{lemma}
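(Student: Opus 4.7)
The plan is to mimic the proof of Lemma~\ref{lem:aux1}, working with $\ell(t):=\mathbb{E}[e^{-t(b-\eta)}]$ and $L(t):=\log\ell(t)$, so that $\psi(t)=bt+L(t)$, $\psi'(t)=b+L'(t)$, and $\psi^{(n)}(t)=L^{(n)}(t)$ for $n\ge 2$. The crux is an Abelian-type asymptotic expansion for the derivatives of $\ell$. Setting $F(x):=\mathbb{P}\{b-\eta\le x\}$ (continuous at $0$ since the hypothesis forces $\mathbb{P}\{\eta=b\}=0$), I would prove that for each $n\in\{0,1,2,3\}$,
\[
\ell^{(n)}(t)=(-1)^n\int_0^\infty x^n e^{-tx}\,dF(x)=(-1)^n\lambda r\,\Gamma(r+n)\,t^{-(r+n)}+o\bigl(t^{-(r+n)}\bigr),\qquad t\to\infty.
\]
Integration by parts rewrites the left-hand side as $\int_0^\infty F(x)(tx-n)x^{n-1}e^{-tx}\,dx$; writing $F(x)=\lambda x^r+x^r\epsilon(x)$ with $\epsilon(x)\to0$ and splitting the integral at a small threshold $\delta>0$, the leading part yields exactly the announced main term via $\int_0^\infty x^{n+r-1}(tx-n)e^{-tx}\,dx=r\,\Gamma(n+r)\,t^{-n-r}$, while the error is controlled by $\sup_{x\le\delta}|\epsilon(x)|$ on $[0,\delta]$ and by the exponential decay of $e^{-tx}$ on $[\delta,\infty)$.

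Part~(a) then follows by taking logarithms in the case $n=0$: $L(t)=-r\log t+\log(\lambda\Gamma(r+1))+o(1)$. For parts~(b) and~(c) I would substitute the consequence
\[
\frac{\ell^{(n)}(t)}{\ell(t)}=(-1)^n\,\frac{r\,\Gamma(r+n)}{\Gamma(r+1)}\,t^{-n}+o(t^{-n})
\]
into the standard identities
\[
L'=\frac{\ell'}{\ell},\qquad L''=\frac{\ell''}{\ell}-\Big(\frac{\ell'}{\ell}\Big)^{\!2},\qquad L'''=\frac{\ell'''}{\ell}-3\,\frac{\ell'\ell''}{\ell^{2}}+2\,\Big(\frac{\ell'}{\ell}\Big)^{\!3}.
\]
This yields $L'(t)=-r/t+o(1/t)$ immediately, and the cancellations $r(r+1)-r^2=r$ and $-r(r+1)(r+2)+3r^2(r+1)-2r^3=-2r$ give $L''(t)=r/t^2+o(1/t^2)$ and $L'''(t)=-2r/t^3+o(1/t^3)$. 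Adding the deterministic term $b$ in $\psi'=b+L'$ finishes~(b), and~(c) follows since $\psi^{(n)}=L^{(n)}$ for $n\ge 2$.

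The principal obstacle lies entirely in the Abelian step: the cancellation structure in $L''$ and $L'''$ requires each $\ell^{(n)}(t)/\ell(t)$ to be known with genuine $o(t^{-n})$ precision, since the candidate leading coefficients in $\ell''/\ell$ and $(\ell'/\ell)^2$ at order $1/t^2$ are $r(r+1)$ and $r^2$ respectively, differing only by $r$, so any $O(t^{-2})$ slack in either would destroy the claimed asymptotic. Once these sharp $o$-bounds are secured, all remaining computations are routine algebra.
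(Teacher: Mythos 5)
Your proposal is correct, and its overall skeleton coincides with the paper's: reduce everything to the ratios $\ell^{(n)}(t)/\ell(t)$ for $n\le 3$, establish the power asymptotics $(-1)^n\ell^{(n)}(t)\sim\lambda r\Gamma(r+n)t^{-(r+n)}$, and then do the logarithmic-derivative algebra (your cancellation checks $r(r+1)-r^2=r$ and $-r(r+1)(r+2)+3r^2(r+1)-2r^3=-2r$ are exactly what is needed, and your observation that each ratio must be known with relative error $o(1)$ is the right diagnosis of where the difficulty sits). Where you genuinely diverge is in how the asymptotics of $\ell^{(n)}$ are obtained. The paper cites Karamata's Abelian theorem (Theorem 1.7.1$'$ in Bingham--Goldie--Teugels) to get $\ell(t)\sim\lambda\Gamma(1+r)t^{-r}$, and then, writing $\ell(t)=\int_t^\infty(-\ell'(x))\,\dd x$ and exploiting that $(-1)^n\ell^{(n)}$ is nonincreasing (complete monotonicity), it applies the monotone density theorem iteratively to pass from $\ell$ to $\ell'$, $\ell''$, $\ell'''$. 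You instead prove all four asymptotics directly and simultaneously by integrating $\int_0^\infty x^ne^{-tx}\,\dd F(x)$ by parts, substituting $F(x)=\lambda x^r+x^r\epsilon(x)$, and splitting at a threshold $\delta$; this is in effect a self-contained proof of the Abelian theorem with the required $o$-precision, and the exact evaluation $\int_0^\infty x^{n+r-1}(tx-n)e^{-tx}\,\dd x=r\Gamma(n+r)t^{-n-r}$ checks out. The paper's route is shorter because it leans on two standard references; yours is longer but elementary and avoids invoking regular-variation machinery, at the cost of having to carry out the $\delta$-splitting estimate once (uniformly in $n\le3$). Both are sound.
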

\begin{proof}
(a) By Theorem 1.7.1' in \cite{Bingham+Goldie+Teugels:1989},
\begin{equation}\label{eq:relat1}
\ell(t)=\me \eee^{-t(b-\eta)}~\sim~\lambda \Gamma(1+r)t^{-r},\quad t\to\infty,
\end{equation}
where $\Gamma$ is the Euler gamma function. This entails
$$
L(t)=\log \ell(t)=-r\log t+\log\left(\lambda\Gamma(r+1)\right)+o(1),
$$
whence
$$
\psi(t)=bt+L(t)=bt-r\log t+\log\left(\lambda\Gamma(r+1)\right)+o(1).
$$

\noindent (b) Using $\ell(t)=\int_t^\infty (-\ell^\prime(x)){\rm d}x$, the fact that $-\ell'$ is nonincreasing and the monotone density theorem (Theorem 1.7.2 in \cite{Bingham+Goldie+Teugels:1989}) we infer
\begin{equation}\label{eq:relat2}
-\ell^\prime(t)~\sim~\lambda r\Gamma(1+r)t^{-(1+r)},\quad t\to\infty.
\end{equation}
Relations \eqref{eq:relat1} and \eqref{eq:relat2} entail $$L^\prime(t)=rt^{-1}+o(1/t),\quad t\to\infty.$$

The proof of part (c) is analogous, hence omitted. The basic observation is that, for $n\geq 2$, $(-1)^n \ell^{(n)}$ is a nonincreasing function. This enables us to use the monotone density theorem.
\end{proof}

\begin{lemma}\label{lem:at zero}
Under the sole assumptions $\me[\eta]=0$ and $\me [\eta^2]\in (0,\infty)$, as $t\to0$,

$$
\psi(t)~\sim~\me [\eta^2]t^2/2,
\ \ \psi^\prime(t)~\sim~\me [\eta^2] t,\quad \text{and} \ \ \psi^{\prime\prime}(t)\sim\me \eta^2.
$$
If, in addition, $\eta\leq b$ a.s., then $$\lim_{t\to 0+}t \psi^{\prime\prime\prime}(t)=0.$$
\end{lemma}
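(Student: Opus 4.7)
The plan is to derive all four asymptotics from a local Taylor expansion of $M(t):=\me[\eee^{t\eta}]=\eee^{\psi(t)}$ at the origin, with a separate argument for the $\psi'''$ statement. On any interval where $M$ is finite (at least $[0,\infty)$ under the paper's standing assumption $\eta\le b$), dominated convergence yields $M^{(k)}(t)=\me[\eta^k\eee^{t\eta}]$ for $k\le 2$; in particular $M'(0)=\me[\eta]=0$, $M''(0)=\me[\eta^2]$, and $M''$ is continuous at~$0$.

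Applying Taylor's formula with integral remainder,
\[
M(t)=1+\int_0^t(t-s)M''(s)\,\dd s,\qquad M'(t)=\int_0^t M''(s)\,\dd s,
\]
combined with $M''(s)\to\me[\eta^2]$ as $s\to 0$, I would obtain $M(t)=1+\tfrac12\me[\eta^2]t^2+o(t^2)$ and $M'(t)=\me[\eta^2]\,t+o(t)$. The three asymptotics $\psi(t)\sim\me[\eta^2]t^2/2$ (via $\log(1+u)\sim u$), $\psi'(t)=M'(t)/M(t)\sim\me[\eta^2]\,t$, and $\psi''(t)=M''(t)/M(t)-(M'(t)/M(t))^2\to\me[\eta^2]$ follow at once.

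For the $\psi'''$ claim I would start from the identity
\[
\psi'''(t)=\frac{M'''(t)}{M(t)}-3\,\frac{M'(t)M''(t)}{M(t)^2}+2\Big(\frac{M'(t)}{M(t)}\Big)^3,
\]
multiply by $t$, and use $M'(t)=O(t)$, $M''(t)=O(1)$ to see that the second and third contributions are $O(t^2)$ and $O(t^4)$. Everything therefore reduces to showing $tM'''(t)\to 0$ as $t\to 0+$. The hypothesis $\me[\eta^2]<\infty$ does not secure $\me|\eta|^3<\infty$, so one cannot pass the limit inside $\me[\eta^3\eee^{t\eta}]$ directly. The idea is to exploit the prefactor $t$ by splitting according to the sign of~$\eta$: on $\{\eta\ge 0\}$ the bound $\eta\le b$ gives $t|\eta|^3\eee^{t\eta}\le tb^3\eee^{tb}\to 0$; on $\{\eta<0\}$ one rewrites $t|\eta|^3\eee^{t\eta}=\eta^2\cdot(t|\eta|)\eee^{-t|\eta|}$, bounds it by the integrable function $\eta^2/\eee$ via $x\eee^{-x}\le 1/\eee$ for $x\ge 0$, and notes pointwise convergence to~$0$ as $t\to 0+$. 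Dominated convergence then delivers $tM'''(t)\to 0$.

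The only genuinely delicate point is this last step: the left tail of $\eta$ may carry an infinite third absolute moment, and the prefactor~$t$ must be converted (via $x\eee^{-x}\le 1/\eee$) into a reduction of the cubic weight $|\eta|^3$ to the integrable $\eta^2$. The right tail is handled directly by $\eta\le b$.
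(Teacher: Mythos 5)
Your proof is correct and follows essentially the same route as the paper: the first three asymptotics via a standard Taylor expansion of the moment generating function (which the paper omits as standard), and the $\psi'''$ claim by reducing to the third derivative of the m.g.f.\ and using dominated convergence with the bound $x\eee^{-x}\le 1/\eee$ to trade the factor $t$ for a reduction of the cubic weight to the integrable $\eta^2$. The paper works with $\ell(t)=\me[\eee^{-t(b-\eta)}]$ so that $b-\eta\ge 0$ replaces your split by the sign of $\eta$, but this is a purely cosmetic difference.
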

\begin{proof}
The first three claims are standard, and we omit a proof.

As for the last limit relation, observe that, in view of \eqref{eq:third}, the limit $\lim_{t\to 0+}\psi^{\prime\prime\prime}(t)$ is finite provided that $\me [|\eta|^3]<\infty$. Thus, the last claim holds trivially in this case. If $\me [|\eta|^3]=\infty$, then, as $t\to 0+$, the limits of all functions, except $\ell^{\prime\prime\prime}$, appearing in \eqref{eq:third} are still finite. It remains to note that, by Lebesgue's dominated convergence theorem,
\begin{equation*}
t\big|\ell^{\prime\prime\prime}(t)\big|=\me [(b-\eta)^2(t(b-\eta))\eee^{-t(b-\eta)}]~\to~ 0,\quad t\to 0+
\end{equation*}
because the function $x\mapsto x\eee^{-x}$ is bounded on $[0,\infty)$ and $\me [(b-\eta)^2]<\infty$.

\end{proof}

\section{Proofs of Theorems \ref{thm:main1} and \ref{thm:main2} 
}

We follow the steps outlined in Section \ref{sec:structure}. Step 1 is realized by the following propositions.
\begin{assertion}\label{prop:asymp1}
Assume that $\eta\leq b$ a.s.\ with $\mmp\{\eta=b\}=\theta\in (0,1)$, $\me[\eta]=0$ and $\me[\eta^2]<\infty$. If $\alpha=1$, then
\begin{equation}\label{eq:formula 1}
\sum_{k\geq 1}k^{-1}\psi^\prime(t/k)=b\log t+q+o(1/t),\quad t\to\infty,
\end{equation}
where $q=b\gamma_1+\int_0^1 x^{-1}\psi^\prime(x){\rm d}x+\int_1^\infty x^{-1}(\psi^\prime(x)-b){\rm d}x\in\mr$. For each $x\geq 0$, the equation $\sum_{k\geq 1}k^{-1}\psi^\prime(t/k)=x$ has a unique solution $t=t(x)$ satisfying
\begin{equation}\label{eq:expansion}
t(x)=\exp((x-q)/b)+o(1),\quad x\to\infty.
\end{equation}
If $\alpha\in (1/2, 1)$, then
\begin{equation}\label{eq:formula 2}
\sum_{k\geq 1}k^{-\alpha}\psi^\prime(t/k^\alpha)=r_\alpha t^{-1+1/\alpha}+b\gamma_\alpha 
+o(1/t),\quad t\to\infty,
\end{equation}
where $r_\alpha:=\alpha\sigma^2_\alpha/(1-\alpha) 
\in (0,\infty)$. 
For each $x\geq 0$, the equation $\sum_{k\geq 1}k^{-1}\psi^\prime(t/k)=x$ has a unique solution $t=t(x)$ satisfying
\begin{equation}\label{eq:expansion2}
t(x)=(r_\alpha^{-1}(x-b\gamma_\alpha 
)+o(x^{-\alpha/(1-\alpha)}))^{\alpha/(1-\alpha)},\quad x\to\infty.
\end{equation}
\end{assertion}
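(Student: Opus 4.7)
The map $t\mapsto\sum_{k\ge1}k^{-\alpha}\psi'(t/k^\alpha)$ is continuous and strictly increasing on $[0,\infty)$, vanishes at $0$, and tends to $+\infty$ as $t\to\infty$ (because $\psi'(t/k^\alpha)\to b$ for each fixed $k$), and hence is a bijection onto $[0,\infty)$; thus \eqref{eq:t(x)} has a unique solution $t(x)$. The inversions \eqref{eq:expansion} and \eqref{eq:expansion2} will follow from the expansions \eqref{eq:formula 1} and \eqref{eq:formula 2} by direct algebra: for $\alpha=1$ one exponentiates $b\log t=x-q+o(1/t)$ and uses $t\sim\eee^{(x-q)/b}\to\infty$ to turn the $o(1/t)$ relative error into an absolute $o(1)$; for $\alpha\in(1/2,1)$ one isolates $t^{(1-\alpha)/\alpha}=r_\alpha^{-1}(x-b\gamma_\alpha)+o(x^{-\alpha/(1-\alpha)})$ (using $t\sim((x-b\gamma_\alpha)/r_\alpha)^{\alpha/(1-\alpha)}$, so that $o(1/t)=o(x^{-\alpha/(1-\alpha)})$) and raises to the $\alpha/(1-\alpha)$ power. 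The substantive task is therefore the sum asymptotics.

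\textbf{Sum-to-integral via Euler--Maclaurin.} My plan is to apply the Euler--Maclaurin formula \eqref{eq:EulMas infinite} to $f_t(x):=x^{-\alpha}\psi'(t/x^\alpha)$ on $[1,\infty)$ and evaluate $\int_1^\infty f_t(x)\,{\rm d}x$ by the substitution $u=t/x^\alpha$. For $\alpha=1$ this yields $\int_0^t\psi'(u)/u\,{\rm d}u$, which splits at $u=1$ into $b\log t+\int_0^1\psi'(u)/u\,{\rm d}u+\int_1^\infty(\psi'(u)-b)/u\,{\rm d}u+o(1/t)$; the $o(1/t)$ tail uses $\psi'(u)-b=o(1/u)$ from Lemma \ref{lem:aux1}(b). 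For $\alpha\in(1/2,1)$ it gives $\alpha^{-1}t^{1/\alpha-1}\int_0^t u^{-1/\alpha}\psi'(u)\,{\rm d}u$; integration by parts, together with the two displayed formulas for $\sigma_\alpha^2$, identifies $\int_0^\infty u^{-1/\alpha}\psi'(u)\,{\rm d}u=\alpha^2\sigma_\alpha^2/(1-\alpha)$, producing the leading term $r_\alpha t^{-1+1/\alpha}$, while the tail $-\alpha^{-1}t^{1/\alpha-1}\int_t^\infty u^{-1/\alpha}\psi'(u)\,{\rm d}u$ contributes a constant $-b/(1-\alpha)$ up to $o(1/t)$. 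The boundary terms $f_t(1)/2$ and $-f_t'(1)/12$ expand via Lemma \ref{lem:aux1}(b)--(c) to $b/2$ and $\alpha b/12$, respectively, with $o(1/t)$ errors.

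\textbf{Remainder, constant, and main obstacle.} The principal technical difficulty is sharpening the Euler--Maclaurin remainder to $o(1/t)$ precision, as the crude bound $\tfrac{1}{12}\int_1^\infty|f_t''|\,{\rm d}x$ stated in \eqref{eq:EulMas infinite} is only $O(1)$ and does not pin down the nonzero limit of $R_1(t)$. I would replace it with the exact Bernoulli-polynomial form $R_1(t)=\tfrac12\int_1^\infty B_2(\{x\})f_t''(x)\,{\rm d}x$. A direct computation gives $f_t''(x)=\alpha(\alpha+1)x^{-\alpha-2}[\psi'(t/x^\alpha)+c_1(t/x^\alpha)\psi''(t/x^\alpha)+c_2(t/x^\alpha)^2\psi'''(t/x^\alpha)]$ for explicit $c_1,c_2$, which converges pointwise to $b\alpha(\alpha+1)x^{-\alpha-2}$ in view of the uniform decay $u^n|\psi^{(n)}(u)|=o(1)$ for $n=2,3$ from Lemma \ref{lem:aux1}(c); splitting $[1,\infty)$ at $x=(t/U)^{1/\alpha}$ for arbitrarily large $U$ translates this uniform decay into $R_1(t)-\tfrac{b\alpha(\alpha+1)}{2}\int_1^\infty B_2(\{x\})x^{-\alpha-2}\,{\rm d}x=o(1/t)$. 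Finally, the constants assembled from integral, boundary, and the remainder limit must equal $q$ when $\alpha=1$ (respectively $b\gamma_\alpha$ when $\alpha<1$); I would confirm this by running the same Euler--Maclaurin scheme on the reference sum $\sum_{k=1}^n k^{-\alpha}$, which by the defining relations \eqref{eq:EulMas<1}--\eqref{eq:EulMas=1} pins down the Bernoulli integral $\int_1^\infty B_2(\{x\})x^{-\alpha-2}\,{\rm d}x$ in terms of $\gamma_\alpha$ and the explicit boundary constants, thereby closing the identification.
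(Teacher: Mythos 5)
Your proposal is correct, but it follows a genuinely different route from the paper's, and the difference is worth spelling out. The paper never applies Euler--Maclaurin to the full summand $x^{-\alpha}\psi^\prime(t/x^\alpha)$: it first writes $\psi^\prime=b+L^\prime$ with $L(t)=\log\me[\eee^{-t(b-\eta)}]$, handles $b\sum_{k\le \lfloor t^\beta\rfloor}k^{-\alpha}$ by quoting the classical expansion of the partial sums of $\zeta$ (Apostol), which delivers the constant $b\gamma_\alpha$ for free, and applies the finite-range formula \eqref{eq:EulMas finite} only to the $L^\prime$-part, where Lemma \ref{lem:aux1}(b,c) makes every boundary term and the crude remainder bound $o(1/t)$; the truncation at $\lfloor t^\beta\rfloor$ is what keeps the argument of $L^\prime$ bounded away from the origin in a controlled way. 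You instead apply the infinite-range formula directly to $f_t(x)=x^{-\alpha}\psi^\prime(t/x^\alpha)$, which forces you to confront boundary terms ($b/2$, $\alpha b/12$) and a remainder $R_1(t)$ with a nonzero limit; you correctly diagnose that the bound $|R_1|\le\frac1{12}\int_1^\infty|f_t^{\prime\prime}|$ stated in \eqref{eq:EulMas infinite} is useless here and upgrade to the exact Bernoulli-polynomial form, compute $\lim_t R_1(t)=\tfrac{b\alpha(\alpha+1)}{2}\int_1^\infty B_2(\{x\})x^{-\alpha-2}\,{\rm d}x$ with $o(1/t)$ error via the uniform decay $u(\psi^\prime(u)-b),\,u^2\psi^{\prime\prime}(u),\,u^3\psi^{\prime\prime\prime}(u)\to 0$ of Lemma \ref{lem:aux1}, and then eliminate the Bernoulli integral by running the identical scheme on $\sum_{k\le n}k^{-\alpha}$ and invoking \eqref{eq:EulMas<1}--\eqref{eq:EulMas=1}. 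I checked the bookkeeping: for $\alpha\in(1/2,1)$ the constant assembles as $-b/(1-\alpha)+b/2+\alpha b/12+\lim_tR_1(t)=b\gamma_\alpha$, and for $\alpha=1$ as $b/2+b/12+\lim_tR_1(t)=b\gamma_1$, exactly as your self-consistency argument predicts; the leading term $r_\alpha t^{-1+1/\alpha}$ and the tail contribution $-b/(1-\alpha)+o(1/t)$ are also right, as are the inversions \eqref{eq:expansion} and \eqref{eq:expansion2}. What your route buys is that it avoids the $\psi^\prime=b+L^\prime$ splitting and the external reference to Apostol, at the price of needing the sharper (but standard) form of the Euler--Maclaurin remainder and the extra calibration step against $\sum k^{-\alpha}$; what the paper's route buys is that every correction term beyond the explicit $A_\alpha(t)$ is $o(1/t)$ by inspection, which is also why the same skeleton transfers verbatim to Proposition \ref{prop:asymp2}, where $L^\prime(t)\sim -r/t$ produces a genuine extra term $rt^{-1}/2$ from the boundary contribution $L^\prime(t)/2$ --- a term your direct scheme would have to extract from the boundary and remainder analysis with correspondingly more care.
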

\begin{assertion}\label{prop:asymp2}
Assume that $\eta\leq b$ a.s., $\mmp\{b-\eta\leq x\}\sim \lambda x^r$ as $x\to 0+$ for positive $\lambda$ and $r$, $\me[\eta]=0$ and $\me [\eta^2] 
<\infty$. If $\alpha=1$, then
\begin{equation}\label{eq:formula 11}
\sum_{k\geq 1}k^{-1}\psi^\prime(t/k)=b\log t+q+rt^{-1}/2+o(1/t),\quad t\to\infty
\end{equation}
with the same $q$ as in Proposition \ref{prop:asymp1}. For each $x\geq 0$, the equation $\sum_{k\geq 1}k^{-1}\psi^\prime(t/k)=x$ has a unique solution $t=t(x)$ satisfying
\begin{equation}\label{eq:expansion11}
t(x)=\exp((x-q)/b)-r/(2b)+o(1),\quad x\to\infty.
\end{equation}
If $\alpha\in (1/2, 1)$, then
\begin{equation}\label{eq:formula 21}
\sum_{k\geq 1}k^{-\alpha}\psi^\prime(t/k^\alpha)=r_\alpha t^{-1+1/\alpha}+b\gamma_\alpha 
+rt^{-1}/2+o(1/t),\quad t\to\infty
\end{equation}
with the same $r_\alpha$ 
as in Proposition \ref{prop:asymp1}. 
For each $x\geq 0$, the equation $\sum_{k\geq 1}k^{-1}\psi^\prime(t/k)=x$ has a unique solution $t=t(x)$ satisfying
\begin{equation}\label{eq:expansion22}
t(x)=(r_\alpha^{-1}(x-b\gamma_\alpha 
)-(rr_\alpha^{(2\alpha-1)/(1-\alpha)}/2)x^{-\alpha/(1-\alpha)}+o(x^{-\alpha/(1-\alpha)}))^{\alpha/(1-\alpha)},\quad x\to\infty.
\end{equation}
\end{assertion}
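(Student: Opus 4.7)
The plan is to extend the proof of Proposition~\ref{prop:asymp1} by carrying one additional order in the asymptotics of $\psi^\prime$. Lemma~\ref{lem:aux2}(b) provides $\psi^\prime(u)=b-r/u+o(1/u)$ whereas Proposition~\ref{prop:asymp1} relied only on $\psi^\prime(u)=b+o(1/u)$; the extra $-r/u$ piece, once summed against the weights $k^{-\alpha}$, will produce the new term $rt^{-1}/2$ in~\eqref{eq:formula 11} and~\eqref{eq:formula 21}.

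I would re-run the decomposition used to prove Proposition~\ref{prop:asymp1}, namely Euler--Maclaurin~\eqref{eq:EulMas infinite} applied to $f(k)=k^{-\alpha}\psi^\prime(t/k^\alpha)$, and identify where the sharper expansion of $\psi^\prime$ alters the result. Two sources of $1/t$-order excess appear. First, after the substitution $u=t/x^\alpha$ the integral satisfies $\int_1^\infty f(x)\,\dd x=(1/\alpha)t^{-1+1/\alpha}\int_0^t\psi^\prime(u)u^{-1/\alpha}\,\dd u=(1/\alpha)t^{-1+1/\alpha}[\int_0^\infty-\int_t^\infty]$, and the tail $\int_t^\infty$ picks up an additional term $-r\alpha t^{-1/\alpha}$ coming from $-r\int_t^\infty u^{-1-1/\alpha}\,\dd u$; after multiplication by $-(1/\alpha)t^{-1+1/\alpha}$ this contributes $+r/t$ to $\int_1^\infty f(x)\,\dd x$. (The $\alpha=1$ case is analogous: the tail $\int_t^\infty u^{-1}(\psi^\prime(u)-b)\,\dd u=-r/t+o(1/t)$ enters with a minus sign into $\int_1^t u^{-1}(\psi^\prime(u)-b)\,\dd u$ and contributes $+r/t$.) Second, the boundary term $\tfrac12 f(1)=\tfrac12\psi^\prime(t)=b/2-r/(2t)+o(1/t)$ contributes $-r/(2t)$ of excess. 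No other piece contributes at order $1/t$: the endpoint term $-\tfrac1{12}f^\prime(1)=\tfrac{\alpha}{12}[\psi^\prime(t)+t\psi^{\prime\prime}(t)]$ sees an exact cancellation between the $-r/t$ from $\psi^\prime(t)$ and the $+r/t$ from $t\psi^{\prime\prime}(t)=r/t+o(1/t)$ (Lemma~\ref{lem:aux2}(c)). Summing, $+r/t-r/(2t)=+r/(2t)$ matches the stated excess.

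For the inversion relations~\eqref{eq:expansion11} and~\eqref{eq:expansion22}, I would substitute the Ansatz $t=t_0+\delta$ into~\eqref{eq:formula 11} with $t_0=\exp((x-q)/b)$: the leading-order identity $b\log t_0+q=x$ from Proposition~\ref{prop:asymp1} reduces the equation to $b\delta/t_0+r/(2t_0)+o(1/t_0)=0$, hence $\delta=-r/(2b)+o(1)$. For $\alpha\in(1/2,1)$ substitute $t^{(1-\alpha)/\alpha}=r_\alpha^{-1}(x-b\gamma_\alpha)+\Delta$ into~\eqref{eq:formula 21} and use $1/t_0\sim r_\alpha^{\alpha/(1-\alpha)}x^{-\alpha/(1-\alpha)}$ with $t_0=(r_\alpha^{-1}(x-b\gamma_\alpha))^{\alpha/(1-\alpha)}$; matching the $1/t$ coefficient forces $\Delta=-(rr_\alpha^{(2\alpha-1)/(1-\alpha)}/2)x^{-\alpha/(1-\alpha)}+o(x^{-\alpha/(1-\alpha)})$, as in~\eqref{eq:expansion22}.

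The main obstacle will be controlling the Euler--Maclaurin remainder $R_1$ at order $o(1/t)$. The standard bound $|R_1|\le(1/12)\int_1^\infty|f^{\prime\prime}(x)|\,\dd x$ yields only $|R_1|=O(1/t)$, since $f^{\prime\prime}$ contains the piece $tx^{-2\alpha-2}\psi^{\prime\prime}(t/x^\alpha)$ whose size is $\asymp r/(tx^2)$ on $[1,t^{1/\alpha}]$ by Lemma~\ref{lem:aux2}(c). One must therefore either extend Euler--Maclaurin by the next Bernoulli correction, or identify the $R_1$ limit directly via the definitions~\eqref{eq:EulMas<1}--\eqref{eq:EulMas=1} of $\gamma_\alpha$ (as is presumably done in Proposition~\ref{prop:asymp1}) and verify that the additional $-r/(2t)$ arising from the $\tfrac12 f(1)$ term coexists cleanly with this identification. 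The rest of the argument is essentially bookkeeping.
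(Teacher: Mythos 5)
Your accounting of where the extra $rt^{-1}/2$ comes from agrees with the paper's: the tail correction to the integral contributes $+r/t$ (in the paper this is the step $-\alpha^{-1}t^{-1+1/\alpha}\int_t^\infty y^{-1/\alpha}L^\prime(y)\,\dd y=rt^{-1}+o(1/t)$), the midpoint boundary term contributes $-r/(2t)$ (the paper sees this as $L^\prime(t)/2=-r/(2t)+o(1/t)$; your $\tfrac12\psi^\prime(t)$ differs only by the constant $b/2$, which belongs to the other half of the decomposition), and the derivative boundary term is harmless because $L^\prime(t)+tL^{\prime\prime}(t)=o(1/t)$. Your inversion arguments for \eqref{eq:expansion11} and \eqref{eq:expansion22} are also correct.

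The genuine gap is the remainder, which you flag but neither resolve nor diagnose correctly. Applying \eqref{eq:EulMas infinite} to the full $f(x)=x^{-\alpha}\psi^\prime(t/x^\alpha)$ does not give $|R_1|=O(1/t)$: the second derivative contains $\alpha(\alpha+1)x^{-\alpha-2}\psi^\prime(t/x^\alpha)\approx\alpha(\alpha+1)bx^{-\alpha-2}$ on $[1,t^{1/\alpha}]$, whose integral is a nonzero constant, so $R_1(t)=O(1)$ and the constant $b\gamma_\alpha$ is buried inside $\lim_{t\to\infty}R_1(t)$; one would then have to prove $R_1(t)=R_1(\infty)+o(1/t)$, which is not easier than the original problem. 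The paper sidesteps this by writing $\sum k^{-\alpha}\psi^\prime(t/k^\alpha)$ (truncated at $\lfloor t^\beta\rfloor$) as $b\sum k^{-\alpha}+\sum k^{-\alpha}L^\prime(t/k^\alpha)$, treating the first sum exactly via Apostol's formula (which produces $b\gamma_\alpha$ with error $o(1/t)$) and applying Euler--Maclaurin only to $f_t(x)=x^{-\alpha}L^\prime(t/x^\alpha)$. Even then, your term-by-term bound would only give $O(1/t)$, exactly as you worry; the rescue is a cancellation you did not find: after substitution the remainder is bounded by $t^{-(1+1/\alpha)}\int_0^t\big|(\alpha+1)y^{1/\alpha}L^\prime(y)+(3\alpha+1)y^{1+1/\alpha}L^{\prime\prime}(y)+\alpha y^{2+1/\alpha}L^{\prime\prime\prime}(y)\big|\,\dd y$, and by Lemma \ref{lem:aux2} the leading coefficients combine as $-(\alpha+1)r+(3\alpha+1)r-2\alpha r=0$, so the integrand is $o(y^{-1+1/\alpha})$ and the remainder is $o(1/t)$. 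Since neither of your proposed fixes (a further Bernoulli correction, or identifying the limit of $R_1$) is carried out, the $o(1/t)$ precision claimed in \eqref{eq:formula 11} and \eqref{eq:formula 21} is not established as written.
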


Now we are passing to Step 2.
\begin{assertion}\label{prop:first part1}
Assume that $\eta\leq b$ a.s.\ with $\mmp\{\eta=b\}=\theta\in (0,1)$, $\me[\eta]=0$ and $\me [\eta^2] 
<\infty$. If $\alpha=1$, then
\begin{equation}\label{eq:first 1}
\sum_{k\geq 1}\big(\psi(t/k)-(t/k)\psi^\prime(t/k)\big)=-bt-2^{-1}\log\theta+o(1),\quad t\to\infty,
\end{equation}
whereas if $\alpha\in (1/2, 1)$, then
$$\sum_{k\geq 1}\big(\psi(t/k^\alpha)-(t/k^\alpha)\psi^\prime(t/k^\alpha)\big)=-\alpha^{-2}(1-\alpha)\kappa_\alpha t^{1/\alpha}-2^{-1}\log\theta+o(1),\quad t\to\infty,$$ where $\kappa_\alpha=\int_0^\infty x^{-1-1/\alpha}\psi(x){\rm d}x<\infty$.
\end{assertion}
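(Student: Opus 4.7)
\noindent\textit{Proof plan.} The strategy is to set $h(y):=\psi(y)-y\psi'(y)$, note the useful identity $h'(y)=-y\psi''(y)$, and apply the Euler--Maclaurin formula \eqref{eq:EulMas infinite} to $f(x):=h(t/x^\alpha)$, treating $t$ as a fixed large parameter. This yields
\[
\sum_{k\ge 1}h(t/k^\alpha)=\int_1^\infty h(t/x^\alpha)\,\dd x+\tfrac12 h(t)-\tfrac{1}{12}f'(1)+R_1.
\]
The Euler--Mascheroni-style main term will come from the integral; the value $\tfrac12 h(t)\to\tfrac12\log\theta$ (by Lemma~\ref{lem:aux1}(a) applied to $\psi(t)-t\psi'(t)=\log\theta+o(1)$) will produce the $-\tfrac12\log\theta$ on the right-hand side; the remaining Euler--Maclaurin contributions will be shown to be $o(1)$ using Lemma~\ref{lem:aux1}(c).

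\noindent For the integral, the substitution $u=t/x^\alpha$ gives
\[
\int_1^\infty h(t/x^\alpha)\,\dd x=\frac{t^{1/\alpha}}{\alpha}\int_0^t u^{-1-1/\alpha}h(u)\,\dd u.
\]
The identity $\tfrac{\dd}{\dd u}\bigl(-\alpha u^{-1/\alpha}\psi(u)\bigr)=u^{-1-1/\alpha}\psi(u)-\alpha u^{-1/\alpha}\psi'(u)$ lets me rewrite $u^{-1-1/\alpha}h(u)=-\alpha\bigl(u^{-1/\alpha}\psi(u)\bigr)'+(\alpha-1)u^{-1/\alpha}\psi'(u)$, which after integration (the boundary term at $0$ vanishes since $\psi(u)\sim\tfrac12\E[\eta^2]u^2$ and $\alpha>1/2$) produces
\[
\frac{t^{1/\alpha}}{\alpha}\int_0^t u^{-1-1/\alpha}h(u)\,\dd u=-\psi(t)+\frac{(\alpha-1)t^{1/\alpha}}{\alpha}\int_0^t u^{-1/\alpha}\psi'(u)\,\dd u.
\]
For $\alpha=1$ this collapses to $-\psi(t)=-bt-\log\theta+o(1)$, immediately yielding \eqref{eq:first 1}. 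For $\alpha\in(1/2,1)$ I split $\int_0^t=\int_0^\infty-\int_t^\infty$. A further integration by parts gives $\int_0^\infty u^{-1/\alpha}\psi'(u)\,\dd u=\kappa_\alpha/\alpha$ (with $\kappa_\alpha<\infty$ because $\psi(u)/u^{1+1/\alpha}$ is integrable near $0$ for $\alpha>1/2$ and near $\infty$ since $\psi(u)\sim bu$), and the tail $\int_t^\infty u^{-1/\alpha}\psi'(u)\,\dd u=b\alpha t^{1-1/\alpha}/(1-\alpha)+o(t^{-1/\alpha})$ follows by writing $\psi'(u)=b+(\psi'(u)-b)$ and using $\psi'(u)-b=o(1/u)$ from Lemma~\ref{lem:aux1}(b). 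Assembling and cancelling $-\psi(t)+bt=-\log\theta+o(1)$ produces the integral $=-\alpha^{-2}(1-\alpha)\kappa_\alpha t^{1/\alpha}-\log\theta+o(1)$.

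\noindent Finally, the Euler--Maclaurin corrections must be shown negligible. Direct differentiation gives $f'(1)=\alpha t^2\psi''(t)$, which is $o(1)$ by Lemma~\ref{lem:aux1}(c). For the remainder, computing $|f''(x)|\le C_1(t^2/x^{2\alpha+2})|\psi''(t/x^\alpha)|+C_2(t^3/x^{3\alpha+2})|\psi'''(t/x^\alpha)|$ and substituting $u=t/x^\alpha$ reduces the bound to a multiple of $t^{-1/\alpha}\int_0^t u^{1+1/\alpha}|\psi''(u)|\,\dd u+t^{-1/\alpha}\int_0^t u^{2+1/\alpha}|\psi'''(u)|\,\dd u$. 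Each integrand factorises as $u^{1/\alpha-1}$ times a quantity that tends to $0$ at infinity by Lemma~\ref{lem:aux1}(c), so a standard splitting argument gives the required $o(1)$ bound (the contribution near $0$ is controlled by Lemma~\ref{lem:at zero}).

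\noindent The main obstacle is the bookkeeping in the $\alpha\in(1/2,1)$ case: one must keep track simultaneously of a divergent term of order $t^{1/\alpha}$, a spurious divergent term of order $t$ coming from $\psi(t)\sim bt$, and the constant $-\tfrac12\log\theta$. These interact only after identifying the extracted $-\psi(t)$ with the tail $\int_t^\infty u^{-1/\alpha}\psi'(u)\,\dd u$; verifying this cancellation to $o(1)$ precision, while simultaneously showing that all Euler--Maclaurin auxiliary terms are $o(1)$ despite the absence of an $L^1$ bound on $f''$ with uniform constant, is the delicate part.
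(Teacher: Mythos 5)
Your proposal is correct and follows essentially the same route as the paper: the Euler--Maclaurin formula \eqref{eq:EulMas infinite} applied to the same function $f_t(x)=\psi(t/x^\alpha)-(t/x^\alpha)\psi'(t/x^\alpha)$, the same identification $f_t(1)/2=\tfrac12\log\theta+o(1)$ and $f_t'(1)=\alpha t^2\psi''(t)=o(1)$, and the same $o(1)$ bound on the remainder via the substitution $u=t/x^\alpha$ and Lemma~\ref{lem:aux1}(c). Your evaluation of the integral (antiderivative identity after substituting $u=t/x^\alpha$, then splitting $\int_0^t=\int_0^\infty-\int_t^\infty$) is just the change-of-variables version of the paper's decomposition $f_t=f_{t,1}-f_{t,2}$ with $f_{t,1}=(xg_t(x))'$, so the two arguments coincide.
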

\begin{rem}\label{rem:cont}
At the first glance it may seem that the asymptotic is `discontinuous' at $\alpha=1$. However, this is not the case. We shall show below that
\begin{equation}\label{eq:cont}
\lim_{\alpha\to 1-0}(1-\alpha)\kappa_\alpha=b.
\end{equation}
\end{rem}

\begin{assertion}\label{prop:first part2}
Assume that $\eta\leq b$ a.s., $\mmp\{b-\eta\leq x\}\sim \lambda x^r$ as $x\to 0+$ for positive $\lambda$ and $r$, $\me[\eta]=0$ and $\me [\eta^2] 
<\infty$. If $\alpha=1$, then, as $t\to\infty$,
\begin{multline}\label{eq:first 11}
\sum_{k\geq 1}\big(\psi(t/k)-(t/k)\psi^\prime(t/k)\big)=-bt+(r/2)\log t+(r/2)(\log (2\pi)-1)\\-(1/2)\log(\lambda\Gamma(r+1))+o(1),
\end{multline}
whereas if $\alpha\in (1/2, 1)$, then
\begin{align}\label{eq:first 21}
\nonumber
\sum_{k\geq 1}\big(\psi(t/k^\alpha)-(t/k^\alpha)\psi^\prime(t/k^\alpha)\big)&=-\alpha^{-2}(1-\alpha)\kappa_\alpha t^{1/\alpha}+(r/2)\log t\\
&\hspace{0.5cm}+(r/2)(\alpha \log (2\pi)-1 
)
-(1/2)\log(\lambda\Gamma(r+1))+o(1),
\end{align}
where $\kappa_\alpha=\int_0^\infty x^{-1-1/\alpha}\psi(x){\rm d}x<\infty$.
\end{assertion}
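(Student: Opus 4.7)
The strategy is to generalize the proof of Proposition \ref{prop:first part1}, adapting its template to incorporate the additional logarithmic correction in $\psi$ provided by Lemma \ref{lem:aux2}(a).

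The starting point is the identity $\psi(u)-u\psi'(u)=-\int_0^u y\psi''(y)\,dy$, which holds because $\psi(0)=\psi'(0)=0$. Interchanging sum and integral and writing $\lfloor x\rfloor=x-\{x\}$ gives
\[
\sum_{k\ge 1}\bigl[\psi(t/k^\alpha)-(t/k^\alpha)\psi'(t/k^\alpha)\bigr]=-t^{1/\alpha}\int_0^t y^{1-1/\alpha}\psi''(y)\,dy+\int_0^t y\psi''(y)\{(t/y)^{1/\alpha}\}\,dy.
\]
The smooth integral is handled as in Proposition \ref{prop:first part1}: using $\int_0^\infty y^{1-1/\alpha}\psi''(y)\,dy=\alpha\sigma_\alpha^2=\alpha^{-2}(1-\alpha)\kappa_\alpha$ and the refined tail $\psi''(y)=r/y^2+o(1/y^2)$ from Lemma \ref{lem:aux2}(c), the truncation at $t$ produces a correction $\alpha r\, t^{-1/\alpha}+o(t^{-1/\alpha})$. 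After multiplying by $-t^{1/\alpha}$, the smooth part equals $-\alpha^{-2}(1-\alpha)\kappa_\alpha t^{1/\alpha}+\alpha r+o(1)$. In the case $\alpha=1$ the smooth integral simplifies to $-t\psi'(t)=-bt+r+o(1)$, using Lemma \ref{lem:aux2}(b).

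The oscillatory integral is where the logarithmic and Stirling-type terms originate. The substitution $u=(t/y)^{1/\alpha}$ converts it to $\alpha t^2\int_1^\infty\{u\}u^{-2\alpha-1}\psi''(t/u^\alpha)\,du$. In the range where $t/u^\alpha$ is large, the expansion $\psi''(v)=r/v^2+o(1/v^2)$ reduces the integrand to $r\alpha\{u\}/u$ up to a perturbation, and the classical identity
\[
\int_1^N\frac{\{u\}-1/2}{u}\,du\xrightarrow[N\to\infty]{}\tfrac12\log(2\pi)-1,
\]
which is equivalent to Stirling's expansion of $\log N!$, yields a contribution $(r/2)\log t+(r\alpha/2)\log(2\pi)-r\alpha+o(1)$ after substituting $N\asymp t^{1/\alpha}$. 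The complementary regime $u\ge t^{1/\alpha}$, after inverting the substitution, produces an oscillatory integral on $(0,1]$ whose mean value, controlled via Lemma \ref{lem:aux2}(a) (equivalently via $\ell(t)\sim\lambda\Gamma(r+1)t^{-r}$), delivers the constant $-(1/2)\log(\lambda\Gamma(r+1))$, the natural analogue of the $-(1/2)\log\theta$ term in Proposition \ref{prop:first part1} (consistent with Remark \ref{rem:cont}). Assembling all contributions yields \eqref{eq:first 21}; the case \eqref{eq:first 11} is recovered analogously.

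The principal technical challenge is the error analysis in the oscillatory integral. The expansion of $\psi''$ from Lemma \ref{lem:aux2}(c) is only pointwise $o(1/v^2)$, and the naive bound $\int_1^t|\epsilon(t/u)|/u\,du=o(\log t)$ is too weak for the desired $o(1)$ precision. One must therefore exploit the oscillation of $\{u\}-1/2$ via Riemann--Lebesgue-type averaging to upgrade the $o(\log t)$ bound to $o(1)$, and carefully splice the two substitutions around $u=t^{1/\alpha}$ (equivalently $y=1$) so that no double-counting or boundary artefact arises. This matching is the most delicate part of the proof, but it is precisely what supplies both the $(r\alpha/2)\log(2\pi)$ constant (from the Stirling side) and the $-(1/2)\log(\lambda\Gamma(r+1))$ constant (from the bounded side).
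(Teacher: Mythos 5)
Your decomposition
\[
\sum_{k\geq 1}\big(\psi(t/k^\alpha)-(t/k^\alpha)\psi^\prime(t/k^\alpha)\big)=-t^{1/\alpha}\int_0^t y^{1-1/\alpha}\psi^{\prime\prime}(y)\,{\rm d}y+\int_0^t y\psi^{\prime\prime}(y)\{(t/y)^{1/\alpha}\}\,{\rm d}y
\]
is a legitimate alternative to the paper's route (which truncates at $\lfloor t^\beta\rfloor$, applies Euler--Maclaurin to $h_t(x)=f_t(x)+r\log(t/x^\alpha)$ and extracts the $\log(2\pi)$ constant from Stirling), and your treatment of the smooth part and of the principal piece $r\alpha\int_1^{t^{1/\alpha}}\{u\}u^{-1}{\rm d}u$ is sound. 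However, the handling of the remainder contains a genuine error. Writing $\psi''(y)=r/y^2+\epsilon(y)$ on $[1,t]$, you propose to kill $\int_1^t y\epsilon(y)\{(t/y)^{1/\alpha}\}\,{\rm d}y$ by ``exploiting the oscillation of $\{u\}-1/2$''. Averaging can at best replace $\{(t/y)^{1/\alpha}\}$ by its mean $1/2$, leaving $\tfrac12\int_1^t y\epsilon(y)\,{\rm d}y$, which is \emph{not} $o(1)$: it equals $\tfrac12\big(t\psi'(t)-\psi(t)-r\log t\big)-\tfrac12(\psi'(1)-\psi(1))$, and by Lemma \ref{lem:aux2}(a,b) this converges to $-\tfrac{r}{2}-\tfrac12\log(\lambda\Gamma(r+1))-\tfrac12(\psi'(1)-\psi(1))$. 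In other words, the term you are trying to discard as an error is precisely where the constant $-\tfrac12\log(\lambda\Gamma(r+1))$ lives, and it cannot be reached from the pointwise bound $\epsilon(y)=o(1/y^2)$ alone --- one must integrate back up to $\psi$ and invoke Lemma \ref{lem:aux2}(a), exactly as the paper does through $\psi(t)$ and $\int_t^\infty y^{-1-1/\alpha}\psi(y)\,{\rm d}y$.

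Correspondingly, your attribution of $-\tfrac12\log(\lambda\Gamma(r+1))$ to the regime $u\ge t^{1/\alpha}$ (equivalently $y\in(0,1]$) is wrong: there $\psi''$ is evaluated at arguments in $(0,1]$, where only Lemma \ref{lem:at zero} is relevant, and equidistribution gives $\int_0^1 y\psi''(y)\{(t/y)^{1/\alpha}\}\,{\rm d}y\to\tfrac12(\psi'(1)-\psi(1))$, a constant that merely cancels the boundary term above and carries no information about $\lambda$ or $r$. So the architecture of your proof is salvageable, but two pieces are missing: (i) a proof that $\int_1^t y\epsilon(y)\big(\{(t/y)^{1/\alpha}\}-1/2\big)\,{\rm d}y=o(1)$ (a genuine oscillation estimate, nontrivial near $y\asymp t$ where the phase stops oscillating), and (ii) the identification of the resulting mean term via $\int_1^t y\psi''(y)\,{\rm d}y=t\psi'(t)-\psi(t)+\mathrm{const}$ together with Lemma \ref{lem:aux2}(a). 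As written, the argument would not produce the constant $-\tfrac12\log(\lambda\Gamma(r+1))$ at all.
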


Step 3 is implemented with the help of the result given next.
\begin{assertion}\label{prop:second part}
Assume that $\eta\leq b$ a.s., $\me[\eta]=0$ and
$\me[\eta^2]
\in (0,\infty)$. Then
$$
\lim_{t\to\infty} t^{1/(2\alpha)}\me^{(t)}\big[\eee^{-tS_0(\alpha)}\1_{\{S_0(\alpha)>0\}}\big]=(2\pi\sigma_\alpha^2)^{-1/2},
$$
where
\begin{equation}\label{eq:sigma}
\sigma^2_\alpha=\alpha^{-1}\int_0^\infty x^{1-1/\alpha} \psi^{\prime\prime}(x){\rm d}x\in (0,\infty)
\end{equation}
(in particular, $\sigma_1^2=b$).
\end{assertion}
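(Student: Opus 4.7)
The plan is to rewrite the expectation in a form that exposes the relevant scaling, then apply the local central limit theorem for $S_0(\alpha)$ under $\mmp^{(t)}$ (Theorem \ref{thm:Stone}) together with dominated convergence. First, Fubini's theorem (equivalently, integration by parts) gives
\begin{equation*}
\me^{(t)}\big[\eee^{-tS_0(\alpha)}\1_{\{S_0(\alpha)>0\}}\big]=\int_0^\infty \eee^{-v}\,\mmp^{(t)}\{0<S_0(\alpha)\le v/t\}\,dv,
\end{equation*}
so the task reduces to the asymptotic behavior of $\mmp^{(t)}\{0<S_0(\alpha)\le v/t\}$ for fixed $v>0$ as $t\to\infty$.

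Second, I determine the asymptotics of the tilted variance $\sigma_t^2:=\var^{(t)}(S(\alpha))=\sum_{k\ge 1}k^{-2\alpha}\psi''(t/k^\alpha)$. Applying the Euler-Maclaurin formula \eqref{eq:EulMas infinite} to $f(k)=k^{-2\alpha}\psi''(t/k^\alpha)$ and substituting $x=t/k^\alpha$ in the resulting integral gives
\begin{equation*}
\sigma_t^2~\sim~\frac{t^{1/\alpha-2}}{\alpha}\int_0^\infty x^{1-1/\alpha}\psi''(x)\,dx=\sigma_\alpha^2 t^{1/\alpha-2},\qquad t\to\infty,
\end{equation*}
where the boundary and remainder terms are $O(t^{-2})$ by Lemmas \ref{lem:aux1}(c) and \ref{lem:aux2}(c), hence negligible relative to the main term since $1/\alpha-2>-2$. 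Consequently $t\sigma_t\sim\sigma_\alpha t^{1/(2\alpha)}$, which matches the power of $t$ appearing in the claim; the case $\alpha=1$ reduces to $\sigma_1^2=\lim_{x\to\infty}\psi'(x)-\psi'(0)=b$.

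Third, I invoke Theorem \ref{thm:Stone}, which furnishes a density $p_t$ of $S_0(\alpha)$ under $\mmp^{(t)}$ with $\sup_{y\in\mr}|\sigma_t p_t(\sigma_t y)-\phi(y)|\to 0$, where $\phi$ is the standard normal density. Since $(v/t)/\sigma_t\to 0$ for every fixed $v>0$, the uniform density convergence yields
\begin{equation*}
t^{1/(2\alpha)}\,\mmp^{(t)}\{0<S_0(\alpha)\le v/t\}=t^{1/(2\alpha)}\!\int_0^{v/t}\!p_t(s)\,ds~\longrightarrow~\frac{v}{\sigma_\alpha\sqrt{2\pi}}.
\end{equation*}
The same uniform convergence also yields the bound $\sup_x p_t(x)\le C/\sigma_t$ for all large $t$, hence the integrand is dominated by $Cv\eee^{-v}/\sigma_\alpha$. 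Dominated convergence then gives
\begin{equation*}
\lim_{t\to\infty}t^{1/(2\alpha)}\,\me^{(t)}\big[\eee^{-tS_0(\alpha)}\1_{\{S_0(\alpha)>0\}}\big]=\frac{1}{\sigma_\alpha\sqrt{2\pi}}\int_0^\infty v\eee^{-v}\,dv=\frac{1}{\sqrt{2\pi\sigma_\alpha^2}}.
\end{equation*}

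The real difficulty sits inside Theorem \ref{thm:Stone}. The tilted variance $\sigma_t^2$ collapses to $0$ as $t\to\infty$, and the scale $1/t$ on which we must resolve the distribution of $S_0(\alpha)$ is \emph{much smaller} than $\sigma_t$; a classical Berry-Esseen bound only controls the CDF on the scale $\sigma_t$ and is blind to behavior at scale $1/t\ll\sigma_t$. Extracting a density-level estimate at that finer scale — the `superconcentration' phenomenon the authors warn about in Section \ref{sec:structure} — requires a Fourier-analytic argument tailored to the Dirichlet structure of the coefficients $(k^{-\alpha})_{k\ge1}$, and it is this step, rather than the computation above, that does the genuine work.
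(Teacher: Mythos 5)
Your reduction is essentially the paper's: both proofs take Theorem \ref{thm:Stone} as the engine and the proposition follows by integrating the interval asymptotics against $\eee^{-v}$. The packaging differs slightly — you write $\me^{(t)}[\eee^{-tS_0(\alpha)}\1_{\{S_0(\alpha)>0\}}]=\int_0^\infty \eee^{-v}\,\mmp^{(t)}\{0<S_0(\alpha)\le v/t\}\,\dd v$ and pass to the limit under the integral, whereas the paper truncates at $\lfloor t^{1/(4\alpha)}\rfloor h t^{-1}$, discretizes into intervals of length $h/t$, recognizes a Riemann sum, and lets $h\to0+$ at the end. Your Fubini-plus-dominated-convergence version is the cleaner of the two, and your identification of $\var^{(t)}(S(\alpha))\sim\sigma_\alpha^2 t^{1/\alpha-2}$ reproduces \eqref{eq:variance} (proved in the paper inside Lemma \ref{lem:convchar} via direct Riemann integrability rather than Euler--Maclaurin), though that computation is not actually needed once Theorem \ref{thm:Stone} is in hand.

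The one genuine inaccuracy is your reading of Theorem \ref{thm:Stone}. It does \emph{not} furnish a density $p_t$ of $S_0(\alpha)$ with $\sup_y|\sigma_t p_t(\sigma_t y)-\phi(y)|\to0$; it is a statement about probabilities of intervals of length $ht^{-1}$, and in its proof the paper only inverts the characteristic function of the \emph{smoothed} variable $S_0(\alpha)-U_\delta$, whose density is $v_{t,\delta}$. So neither your pointwise limit nor your domination bound $\sup_x p_t(x)\le C/\sigma_t$ is licensed as written. Both are, however, immediate from the theorem as actually stated: taking $x=0$ and $h=v$ gives $t^{1/(2\alpha)}\mmp^{(t)}\{0<S_0(\alpha)\le vt^{-1}\}\to v\,n_\alpha(0)$ for each fixed $v$, and taking $h=1$ gives, for all $t$ large, $\sup_{y}t^{1/(2\alpha)}\mmp^{(t)}\{S_0(\alpha)\in(y,y+t^{-1}]\}\le n_\alpha(0)+1$, whence covering $(0,v/t]$ by $\lceil v\rceil$ such intervals yields the integrable dominating function $(n_\alpha(0)+1)(v+1)\eee^{-v}$. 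With that one-line repair your argument is complete; and your closing remark is accurate — the substantive work lives in Theorem \ref{thm:Stone} (and Lemmas \ref{lem:convchar} and \ref{lem:bound at zero} behind it), exactly as in the paper.
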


With these propositions at hand, we are ready to prove Theorems \ref{thm:main1} and \ref{thm:main2}.
\begin{proof}[Proof of Theorem \ref{thm:main1}]
We only discuss the case $\alpha\in (1/2,1)$, the case $\alpha=1$ being simpler.

Our starting point is representation \eqref{eq:repr}. Invoking Proposition \ref{prop:second part} in combination with $\lim_{x\to\infty}t(x)=+\infty$ we obtain $$\me^{(t(x))}\big[\eee^{-t(x)S_0(\alpha)}\1_{\{S_0(\alpha)>0\}}\big]~\sim~\frac{1}{(2\pi \sigma^2_\alpha (t(x))^{1/\alpha})^{1/2}},\quad x\to\infty.$$ Using \eqref{eq:expansion2} we infer
\begin{multline}
(t(x))^{1/\alpha}-(r^{-1}_\alpha(x-b\gamma_\alpha 
))^{1/(1-\alpha)}=(r^{-1}_\alpha(x-b\gamma_\alpha 
)+o(x^{-\alpha/(1-\alpha)}))^{1/(1-\alpha)}-(r^{-1}_\alpha(x-b\gamma_\alpha 
))^{1/(1-\alpha)}\\=O(x^{1/(1-\alpha)})o(x^{-1/(1-\alpha)})=o(1),\quad x\to\infty.\label{eq:refine}
\end{multline}
This yields
\begin{equation}\label{eq:second factor}
\me^{(t(x))}\big[\eee^{-t(x)S_0(\alpha)}\1_{\{S_0(\alpha)>0\}}\big]~\sim~\Big(\frac{r^{1/(1-\alpha)}_\alpha}{2\pi \sigma^2_\alpha x^{1/(1-\alpha)}}\Big)^{1/2},\quad x\to\infty.
\end{equation}
By Proposition \ref{prop:first part1}, \eqref{eq:refine} and noting that $(1-\alpha)\alpha^{-2}\kappa_\alpha=\alpha\sigma^2_\alpha$,
\begin{multline*}
\exp\Big(\sum_{k\geq 1}(\psi(t(x)/k^\alpha)-(t(x)/k^\alpha)\psi^\prime(t(x)/k^\alpha))\Big)~\sim~
\frac{1}{\theta^{1/2}}\exp\Big(-\frac{(1-\alpha)\kappa_\alpha}{\alpha^2}(t(x))^{1/\alpha}\Big)\\~\sim~
\frac{1}{\theta^{1/2}}\exp\Big(-\alpha \sigma^2_\alpha\Big(\frac{x-b\gamma_\alpha 
}{r_\alpha}\Big)^{1/(1-\alpha)}\Big),\quad x\to\infty.
\end{multline*}
Combining this with \eqref{eq:second factor} proves Theorem \ref{thm:main1} in the case $\alpha\in (1/2,1)$.
\end{proof}

\begin{proof}[Proof of Theorem \ref{thm:main2}]
This proof is analogous to that of Theorem \ref{thm:main1}. We only give a counterpart of \eqref{eq:refine} for $t=t(x)$ satisfying \eqref{eq:expansion22}:
\begin{equation*}
(t(x))^{1/\alpha}=(r^{-1}_\alpha(x-b\gamma_\alpha 
))^{1/(1-\alpha)}-(rr_\alpha^{(2\alpha-1)/(1-\alpha)})/(2(1-\alpha))+o(1),\quad x\to\infty.
\end{equation*}
\end{proof}

\subsection{Proof of Propositions \ref{prop:asymp1} and \ref{prop:asymp2}}

\begin{proof}[Proof of Proposition \ref{prop:asymp1}]
Fix any $\beta>2/(2\alpha-1)>1$. By Lemma \ref{lem:at zero}, $\psi^\prime(t)\sim \me [\eta^2]t$ as $t\to 0$. Hence, $$\sum_{k\geq \lfloor t^\beta\rfloor+1}k^{-\alpha}\psi^\prime(t/k^\alpha)=O\Big(t\sum_{k\geq \lfloor t^\beta\rfloor+1}k^{-2\alpha}\Big)=O(t^{1-(2\alpha-1)\beta})=o(1/t),\quad t\to\infty.$$ Recall that $\psi^\prime(t)=b+L^\prime(t)$, where $L(t)=\log \me [\eee^{-t(b-\eta)}]$ for $t\geq 0$, and write $$\sum_{k=1}^{\lfloor t^\beta\rfloor}k^{-\alpha}\psi^\prime(t/k^\alpha)=b\sum_{k=1}^{\lfloor t^\beta\rfloor}k^{-\alpha}+\sum_{k=1}^{\lfloor t^\beta\rfloor}k^{-\alpha}L^\prime(t/k^\alpha):=A_\alpha(t)+B_\alpha(t).$$ By Theorem 3.2 (a,b) on p.~55 in \cite{Apostol:1976}, in the case $\alpha=1$,
\begin{equation}\label{eq:at}
A_1(t)=b\log (\lfloor t^\beta\rfloor)+b\gamma_1+O(t^{-\beta}),\quad t\to\infty,
\end{equation}
whereas in the case $\alpha\in (1/2,1)$,
\begin{equation}\label{eq:at2}
A_\alpha(t)=b(1-\alpha)^{-1}(\lfloor t^\beta\rfloor)^{1-\alpha}+b\gamma_\alpha+O(t^{-\alpha\beta}),\quad t\to\infty,
\end{equation}
where $\gamma_\alpha$ is the Euler-Mascheroni constant, see \eqref{eq:EulMas<1} and \eqref{eq:EulMas=1}.

We intend to use formula \eqref{eq:EulMas finite} with $f=f_t$, $m=1$ and $n=\lfloor t^\beta\rfloor$, where $f_t(x)=x^{-\alpha}L^\prime(t/x^\alpha)$. For later needs, we note that $$f_t^\prime(x)=-\alpha\big(x^{-(\alpha+1)}L^\prime(t/x^\alpha)+(t/x^{2\alpha+1})L^{\prime\prime}(t/x^\alpha)\big)$$ and  $$f_t^{\prime\prime}(x)=\alpha\big((\alpha+1)x^{-(\alpha+2)}L^\prime(t/x^\alpha)+(3\alpha+1)tx^{-(2\alpha+2)}L^{\prime\prime}(t/x^\alpha)+\alpha t^2 x^{-(3\alpha+2)}L^{\prime\prime\prime}(t/x^\alpha)\big).$$  According to \eqref{eq:EulMas finite},
\begin{multline}
B_\alpha(t)=\int_1^{\lfloor t^\beta\rfloor}x^{-\alpha}L^\prime(t/x^\alpha){\rm d}x+\big(L^\prime(t)+(\lfloor t^\beta\rfloor)^{-\alpha}L^\prime(t/(\lfloor t^\beta\rfloor)^\alpha)\big)/2\\+\alpha\big(L^\prime(t)+tL^{\prime\prime}(t)-(\lfloor t^\beta\rfloor)^{-(\alpha+1)}L^\prime(t/(\lfloor t^\beta\rfloor)^\alpha)-(t/(\lfloor t^\beta\rfloor)^{2\alpha+1})L^{\prime\prime}\big(t/(\lfloor t^\beta\rfloor)^\alpha\big)\big)/12+R(t), \label{eq:inter4}
\end{multline}
where $R(t)\leq (1/12)\int_1^{\lfloor t^\beta \rfloor}|f_t^{\prime\prime}(x)|{\rm d}x$.

By Lemma \ref{lem:aux1}(b,c), $L^\prime(t)=o(1/t)$ and $tL^{\prime\prime}(t)=t\psi^{\prime\prime}(t)=o(1/t)$
as $t\to\infty$. Using $\lim_{u\to 0+}L^\prime(u)=-b$ and the choice of $\beta$ which ensures $\alpha\beta\geq (2\alpha-1)\beta>2>1$ because $\alpha\in (1/2,1]$, we infer $$(\lfloor t^\beta\rfloor)^{-\alpha}L^\prime\big(t/(\lfloor t^\beta\rfloor)^\alpha\big)=O(t^{-\alpha\beta})=o(1/t),\quad t\to\infty.$$ The latter trivially implies that $(\lfloor t^\beta\rfloor)^{-(\alpha+1)}L^\prime\big(t/(\lfloor t^\beta\rfloor)^\alpha\big)=o(1/t)$. According to Lemma \ref{lem:at zero}, $$(t/(\lfloor t^\beta\rfloor)^{2\alpha+1})L^{\prime\prime}\big(t/(\lfloor t^\beta\rfloor)^\alpha\big)=(t/(\lfloor t^\beta\rfloor)^{2\alpha+1})\psi^{\prime\prime}\big(t/(\lfloor t^\beta\rfloor)^\alpha\big)=O(t^{-\beta(2\alpha+1)-1})=o(1/t).$$
Further, substituting $y=t/x^\alpha$, applying the L'H\^opital rule and Lemma \ref{lem:aux1}(b), we obtain
\begin{align*}
\alpha
\int_1^\infty x^{-(\alpha+2)}L^\prime(t/x^\alpha){\rm d}x
&=t^{-(1+1/\alpha)}\int_1^\infty (t/x^\alpha)^{1/\alpha} (\alpha t/x^{\alpha+1})L^\prime(t/x^\alpha){\rm d}x\\
&=t^{-(1+1/\alpha)}\int_0^t y^{1/\alpha}L^\prime(y){\rm d}y~\sim~\alpha(\alpha+1)^{-1}L^\prime(t)\\
&=o(1/t),\quad t\to\infty.
\end{align*}
Similarly,
\begin{align*}
\alpha t\int_1^\infty x^{-(2\alpha+2)}L^{\prime\prime}(t/x^\alpha){\rm d}x
&=t^{-(1+1/\alpha)}\int_1^\infty (t/x^\alpha)^{1+1/\alpha} (\alpha t/x^{\alpha+1})L^{\prime\prime}(t/x^\alpha){\rm d}x\\
&=t^{-(1+1/\alpha)}\int_0^t y^{1+1/\alpha}L^{\prime\prime}(y){\rm d}y~\sim~\alpha(\alpha+1)^{-1}tL^{\prime\prime}(t)\\
&=o(1/t),\quad t\to\infty
\end{align*}
and
\begin{align*}
\alpha t^2\int_1^\infty x^{-(3\alpha+2)}L^{\prime\prime}(t/x^\alpha){\rm d}x
&=
t^{-(1+1/\alpha)}\int_1^\infty (t/x^\alpha)^{2+1/\alpha} (\alpha t/x^{\alpha+1})L^{\prime\prime\prime}(t/x^\alpha){\rm d}x\\
&=t^{-(1+1/\alpha)}\int_0^t y^{2+1/\alpha}L^{\prime\prime\prime}(y){\rm d}y~\sim~\alpha(\alpha+1)^{-1}t^2L^{\prime\prime\prime}(t)\\
&=o(1/t),\quad t\to\infty.
\end{align*}
This proves $R(t)=o(1/t)$. As a result, 
\begin{equation}\label{eq:bt}
B_\alpha(t)=\int_1^{\lfloor t^\beta\rfloor}x^{-\alpha}L^\prime(t/x^\alpha){\rm d}x+o(1/t),\quad t\to\infty.
\end{equation}

By Lemma \ref{lem:at zero}, $\lim_{y\to 0+}y^{-1}\psi^\prime(y)=\me [\eta^2]$. Hence, the integral $c_0(\alpha):=\int_0^1 y^{-1/\alpha}\psi^\prime(y){\rm d}y$ is well-defined and $$\int_0^{t/(\lfloor t^\beta\rfloor)^\alpha}y^{-1/\alpha}\psi^\prime(y){\rm d}y=O(t^{-(\beta \alpha-1)(2-1/\alpha)})=o(t^{-1/\alpha}),\quad t\to\infty$$ (indeed, $(\beta \alpha-1)(2-1/\alpha)>1/\alpha$ by the choice of $\beta$). In view of $L^\prime(y)=o(1/y)$ as $y\to\infty$ (see Lemma \ref{lem:aux1}(b)), we conclude that the integral $c_1(\alpha):=-\int_1^\infty y^{-1/\alpha}L^\prime(y){\rm d}y=-\int_1^\infty y^{-1/\alpha}(\psi^\prime(y)-b){\rm d}y$ is also well-defined and $-\int_t^\infty y^{-1/\alpha}L^\prime(y){\rm d}y=o(t^{-1/\alpha})$ as $t\to\infty$.
Finally,
\begin{multline}
\int_1^{\lfloor t^\beta\rfloor}x^{-\alpha}L^\prime(t/x^\alpha){\rm d}x\\
=\alpha^{-1}t^{-1+1/\alpha}\left(\int_{t/(\lfloor t^\beta\rfloor)^\alpha}^1 y^{-1/\alpha}L^\prime(y){\rm d}y+\int_1^t y^{-1/\alpha}L^\prime(y){\rm d}y\right)\\
=\alpha^{-1}t^{-1+1/\alpha}\left(-b\int_{t/(\lfloor t^\beta\rfloor)^\alpha}^1 y^{-1/\alpha}{\rm d}y+\int_0^1 y^{-1/\alpha}\psi^\prime(y){\rm d}y-\int_0^{t/(\lfloor t^\beta\rfloor)^\alpha}y^{-1/\alpha}\psi^\prime(y){\rm d}y\right.\\
\left.+\int_1^\infty y^{-1/\alpha}L^\prime(y){\rm d}y-\int_t^\infty y^{-1/\alpha}L^\prime(y){\rm d}y\right).\label{eq:inter5}
\end{multline}
In the case $\alpha=1$ we infer $$\int_1^{\lfloor t^\beta\rfloor}x^{-1}L^\prime(t/x){\rm d}x=b\log t-b\log (\lfloor t^\beta\rfloor)+c_0(1)-c_1(1)+o(1/t),\quad t\to\infty.$$
A combination of this, \eqref{eq:at} and \eqref{eq:bt} yields \eqref{eq:formula 1}. In the case $\alpha\in (1/2,1)$ we obtain $$\int_1^{\lfloor t^\beta\rfloor}x^{-\alpha}L^\prime(t/x^\alpha){\rm d}x=(b(1-\alpha)^{-1}+\alpha^{-1}(c_0(\alpha)-c_1(\alpha)))t^{-1+1/\alpha}-b(1-\alpha)^{-1}(\lfloor t^\beta\rfloor)^{1-\alpha} 
+o(1/t)$$ as $t\to\infty$. Recall that $\sigma^2_\alpha=(1-\alpha)\alpha^{-3}\int_0^\infty y^{-1-1/\alpha}\psi(y){\rm d}y$. This entails
\begin{align*}
b(1-\alpha)^{-1}+\alpha^{-1}(c_0(\alpha)-c_1(\alpha))
&=\alpha^{-1} 
\int_0^\infty y^{-1/\alpha}\psi^\prime(y){\rm d}y 
=\alpha^{-2}\int_0^\infty y^{-1-1/\alpha}\psi(y){\rm d}y 
\\
&\hspace{1cm}=\alpha\sigma_\alpha^2/(1-\alpha)  
=r_\alpha.
\end{align*}
Using these together with \eqref{eq:at2} and \eqref{eq:bt} we arrive at \eqref{eq:formula 2}. 

It was explained in Section \ref{sec:structure} that, for each $x\geq 0$, the equation $\sum_{k\geq 1}k^{-\alpha}\psi^\prime(t/k^\alpha)=x$ has a unique solution $t=t(x)$. To determine its asymptotic behaviour in the case $\alpha=1$, write
\begin{equation}\label{eq:original}
b\log t+q+o(1/t)=x
\end{equation}
or, equivalently, $t^b \eee^{o(1/t)}=\eee^{x-q}$. As a consequence, $t(x)=\exp((x-q)/b)(1+\varepsilon(x))$, where $\varepsilon$ satisfies $\lim_{x\to\infty}\varepsilon(x)=0$. Plugging this into \eqref{eq:original} we infer $\varepsilon(x)=o(\eee^{-x/b})$ as $x\to\infty$ and thereupon \eqref{eq:expansion}.

Let now $\alpha\in (1/2,1)$. Starting with
\begin{equation}\label{eq:original2}
r_\alpha t^{-1+1/\alpha}+b\gamma_\alpha 
+o(1/t)=x
\end{equation}
we conclude that $t(x)=(r_\alpha^{-1}(x-b\gamma_\alpha 
)+\delta(x))^{\alpha/(1-\alpha)}$ for some $\delta$ satisfying $\lim_{x\to\infty}\delta(x)=0$. Plugging this expression into \eqref{eq:original2} we obtain $\delta(x)=o(x^{-\alpha/(1-\alpha)})$ as $x\to\infty$. Thus, representation \eqref{eq:expansion2} does indeed hold.
\end{proof}

\begin{proof}[Proof of Proposition \ref{prop:asymp2}]
The proof of Proposition \ref{prop:asymp1} goes through with the exception of a few places that we now point out. The major distinction in the present setting is that $-L^\prime(t)\sim r/t$ as $t\to\infty$ rather than $L^\prime(t)=o(1/t)$. By Lemma \ref{lem:aux2}, in formula \eqref{eq:inter4} $L^\prime(t)=-rt^{-1}+o(1/t)$ and $L^\prime(t)+tL^{\prime\prime}(t)=o(1/t)$ as $t\to\infty$. Also, $R(t)=o(1/t)$ as $t\to\infty$. Indeed,
\begin{multline*}
\int_1^\infty |f_t^{\prime\prime}(x)|{\rm d}x=t^{-(1+1/\alpha)}\int_0^t\big|(\alpha+1)y^{1/\alpha}L^\prime(y)+(3\alpha+1) y^{1+1/\alpha}L^{\prime\prime}(y)+\alpha y^{2+1/\alpha}L^{\prime\prime\prime}(y)\big|{\rm d}y\\=o(1/t),\quad t\to\infty
\end{multline*}
because the integrand is $o(y^{-1+1/\alpha})$ by Lemma \ref{lem:aux2}. Hence, formula \eqref{eq:bt} transforms into
\begin{equation*}
B_\alpha(t)=\int_1^{\lfloor t^\beta\rfloor}x^{-\alpha}L^\prime(t/x^\alpha){\rm d}x-rt^{-1}/2+o(1/t),\quad t\to\infty.
\end{equation*}
By Lemma \ref{lem:aux2}(b), in formula \eqref{eq:inter5}
$$-\alpha^{-1}t^{-1+1/\alpha}\int_t^\infty y^{-1/\alpha}L^\prime(y){\rm d}y=rt^{-1}+o(1/t),\quad t\to\infty.$$ Combining pieces together we conclude that, in comparison to the case $\mmp\{\eta=b\}\in (0,1)$ treated in Proposition \ref{prop:asymp1}, the asymptotic expansions of $\sum_{k\geq 1}k^{-\alpha}\psi^\prime(t/k^\alpha)$ have the additional summand $rt^{-1}/2$, that is, formulae \eqref{eq:formula 11} and \eqref{eq:formula 21} do indeed hold.

The argument leading to \eqref{eq:expansion11} and \eqref{eq:expansion22} is similar to that used to prove \eqref{eq:expansion} and \eqref{eq:expansion2}. For instance, to obtain \eqref{eq:expansion11} we represent the solution $t$ to
\begin{equation}\label{eq:repr2}
b\log t+q+r/(2t)+o(1/t)=x
\end{equation}
in the form $t(x)=\exp((x-q)/b)(1+\varepsilon(x))$ with $\lim_{x\to\infty}\varepsilon(x)=0$. Substituting this into \eqref{eq:repr2} we obtain \eqref{eq:expansion11}.

\end{proof}

\subsection{Proof of Propositions \ref{prop:first part1} and \ref{prop:first part2}}

We start by addressing the claim made in Remark \ref{rem:cont}.

\noindent {\sc Proof of \eqref{eq:cont}}. According to Lemma \ref{lem:aux1}(a), given $\varepsilon>0$ there exists $B>0$ such that $|\psi(x)-bx|<\varepsilon x$ whenever $x\geq B$. Using such a $B$ write $$(1-\alpha)\kappa_\alpha=(1-\alpha)\Big(\int_0^B x^{-1-1/\alpha}\psi(x){\rm d}x +\int_B^\infty x^{-1-1/\alpha}\psi(x){\rm d}x\Big).$$ The first relation of Lemma \ref{lem:at zero} ensures that $\int_0^Bx^{-2}\psi(x){\rm d}x<\infty$, whence $$\lim_{\alpha\to 1-0}(1-\alpha)\int_0^B x^{-1-1/\alpha}\psi(x){\rm d}x=0.$$ Further, $$(1-\alpha)\int_B^\infty x^{-1-1/\alpha}\psi(x){\rm d}x\leq (1-\alpha)(b+\varepsilon)\int_B^\infty x^{-1/\alpha}{\rm d}x\leq \alpha (b+\varepsilon)B^{1-1/\alpha}.$$ Thus, $\limsup_{\alpha\to 1-0}(1-\alpha)\kappa_\alpha\leq b$. The proof of the converse inequality for the lower limit is analogous.

\begin{proof}[Proof of Proposition \ref{prop:first part1}]
For a fixed $t>0$, put $f_t(x):=\psi(t/x^\alpha)-(t/x^\alpha)\psi^\prime(t/x^\alpha)$ for $x>0$. Then
\begin{equation}\label{eq:limit}
\lim_{x\to\infty}f_t(x)=\lim_{x\to\infty}f_t^\prime(x)=0.
\end{equation}
The former is a consequence of $\psi(0)=0$ and $\lim_{y\to 0}y\psi^\prime(y)=0$, see Lemma \ref{lem:at zero}. The latter follows from $f_t^\prime(x)=\alpha (t^2/x^{2\alpha+1})\psi^{\prime\prime}(t/x^\alpha)$ and the fact that $\lim_{y\to 0}\psi^{\prime\prime}(y)=\me [\eta^2]<\infty$ which holds by Lemma \ref{lem:at zero}.

In view of \eqref{eq:limit}, an application of formula \eqref{eq:EulMas infinite} with $f=f_t$ yields $$\sum_{j\geq 1}f_t(j)=\int_1^\infty f_t(x){\rm d}x+(f_t(1))/2-(f_t^\prime(1))/12+R_1(t),$$ where $|R_1(t)|\leq (1/12)\int_1^\infty |f_t^{\prime\prime}(x)|{\rm d}x$. By Lemma \ref{lem:at zero}(a,b), $f_t(1)=\psi(t)-t\psi^\prime(t)=\log \theta+o(1)$ as $t\to\infty$. Further, by Lemma \ref{lem:aux1}(c), $f_t^\prime(1)=\alpha t^2 \psi^{\prime\prime}(t)\to 0$ as $t\to\infty$. Now we intend to prove that
\begin{equation}\label{eq:rt}
\lim_{t\to\infty} R_1(t)=0.
\end{equation}
To this end, noting that
$$
f_t^{\prime\prime}(x)=-\alpha t^2 \big((2\alpha+1)x^{-2\alpha-2}\psi^{\prime\prime}(t/x^\alpha)+\alpha t x^{-3\alpha-2}\psi^{\prime\prime\prime}(t/x^\alpha)\big),
$$
we obtain $$\int_1^\infty (\alpha t/x^{\alpha+1})(t/x^\alpha)^{(\alpha+1)/\alpha}\psi^{\prime\prime}(t/x^\alpha){\rm d}x=t^{-1/\alpha}\int_0^t x^{1+1/\alpha}\psi^{\prime\prime}(x){\rm d}x~\to~0,\quad t\to\infty
$$
and
$$\int_1^\infty (\alpha t/x^{\alpha+1})(t/x^\alpha)^{(2\alpha+1)/\alpha}\big|\psi^{\prime\prime\prime}(t/x^\alpha)\big| {\rm d}x=t^{-1/\alpha}\int_0^t x^{2+1/\alpha}\big|\psi^{\prime\prime\prime}(x)\big|{\rm d}x~\to~0,\quad t\to\infty.
$$
Here, the limit relations are secured by $\lim_{y\to 0}y^2\psi^{\prime\prime}(y)=0$ and $\lim_{y\to 0}y^3|\psi^{\prime\prime\prime}(y)|=0$, respectively, see Lemma \ref{lem:aux1}(c). The proof of \eqref{eq:rt} is complete.

Write $f_t(x)=\big(\psi(t/x^\alpha)-(\alpha t/x^\alpha)\psi^\prime(t/x^\alpha)\big)-((1-\alpha) t/x^\alpha)\psi^\prime(t/x^\alpha)=:f_{t,1}(x)-f_{t,2}(x)$. Observe that $f_t=f_{t,1}$ in the case $\alpha=1$ and that $f_{t,1}(x)=(xg_t(x))^\prime$, where $g_t(x):=\psi(t/x^\alpha)$. This yields
\begin{equation}\label{eq:inter3}\int_1^\infty f_{t,1}(x){\rm d}x=\lim_{y\to\infty}(yg_t(y))-g_t(1)=-\psi(t)=-bt-\log\theta+o(1),\quad t\to\infty.
\end{equation}
Here, $\lim_{y\to\infty}yg_t(y)=0$ by the first formula in Lemma \ref{lem:at zero} and the last equality is ensured by Lemma \ref{lem:aux1}(a). Combining fragments together we arrive at \eqref{eq:first 1} in the case $\alpha=1$.

Assume now that $\alpha\in (1/2, 1)$. Changing the variable $y=x/t^\alpha$ and then integrating by parts we infer
\begin{multline*}
\int_1^\infty f_{t,2}(x){\rm d}x=\alpha^{-1}(1-\alpha)t^{1/\alpha}\int_0^t y^{-1/\alpha}\psi^\prime(y){\rm d}y\\=\alpha^{-1}(1-\alpha)t^{1/\alpha}\Big(t^{-1/\alpha}\psi(t)+\alpha^{-1}\int_0^\infty y^{-1-1/\alpha}\psi(y){\rm d}y-\alpha^{-1}\int_t^\infty y^{-1-1/\alpha}\psi(y){\rm d}y\Big).
\end{multline*}
In view of Lemma \ref{lem:aux1}(a) and the first relation in Lemma \ref{lem:at zero} the integral $\kappa_\alpha=\int_0^\infty y^{-1-1/\alpha}\psi(y){\rm d}y$ converges. Since $$t^{1/\alpha}\int_t^\infty y^{-1-1/\alpha}\psi(y){\rm d}y=
t^{1/\alpha}\int_t^\infty y^{-1-1/\alpha}(by+\log \theta+o(1)){\rm d}y=\alpha(1-\alpha)^{-1}bt+\alpha \log \theta+o(1)$$ as $t\to\infty$, we conclude that $$\int_1^\infty f_{t,2}(x){\rm d}x=\alpha^{-1}(1-\alpha)\psi(t)+\alpha^{-2}(1-\alpha)\kappa_\alpha t^{1/\alpha}-\alpha^{-1}bt-\alpha^{-1}(1-\alpha)\log \theta+o(1),\quad t\to\infty.$$ Recalling Lemma \ref{lem:aux1}(a), this in combination with \eqref{eq:inter3} proves
\begin{multline*}
\int_1^\infty f_t(x){\rm d}x=\int_1^\infty (f_{t,1}(x)-f_{t,2}(x)){\rm d}x\\=-\alpha^{-1}\psi(t)-\alpha^{-2}(1-\alpha)\kappa_\alpha t^{1/\alpha}+\alpha^{-1}bt+\alpha^{-1}(1-\alpha)\log \theta+o(1)\\=-\alpha^{-2}(1-\alpha)\kappa_\alpha t^{1/\alpha}-\log \theta+o(1),\quad t\to\infty.
\end{multline*}
The proof of Proposition \ref{prop:first part1} is complete.
\end{proof}

\begin{proof}[Proof of Proposition \ref{prop:first part2}]
Fix any $\beta>2/(2\alpha-1)$. By Lemma \ref{lem:at zero}, $\psi(t)\sim \me [\eta^2]t^2/2$ and $\psi^\prime(t)\sim \me [\eta^2]t$ as $t\to 0$. Using
$$-t\sum_{k\geq \lfloor t^\beta\rfloor+1}k^{-\alpha}\psi^\prime(t/k^\alpha)\leq \sum_{k\geq \lfloor t^\beta\rfloor+1}\big(\psi(t/k^\alpha)-(t/k^\alpha)\psi^\prime(t/k^\alpha)\big)\leq \sum_{k\geq \lfloor t^\beta\rfloor+1}\psi(t/k^\alpha)$$ we conclude that $$\sum_{k\geq \lfloor t^\beta\rfloor+1}\big(\psi(t/k^\alpha)-(t/k^\alpha)\psi^\prime(t/k^\alpha)\big)=o(1),\quad t\to\infty.$$ Indeed, $$\sum_{k\geq \lfloor t^\beta\rfloor+1}\psi(t/k^\alpha)=O\Big(t^2\sum_{k\geq \lfloor t^\beta\rfloor+1}k^{-2\alpha}\Big)=O(t^{2-\beta(2\alpha-1)})=o(1),\quad t\to\infty,$$ and an analogous asymptotic estimate holds true for the lower bound involving $\psi^\prime$.

Recalling that $f_t(x)=\psi(t/x^\alpha)-(t/x^\alpha)\psi^\prime(t/x^\alpha)$ and setting
\begin{align*}
h_t(x)
:&=f_t(x)+r\log (t/x^\alpha)\\
&=\psi(t/x^\alpha)-(t/x^\alpha)\psi^\prime(t/x^\alpha)+r\log (t/x^\alpha),
\end{align*}
note that $$h_t^\prime(x)=(\alpha t^2/x^{1+2\alpha})\psi^{\prime\prime}(t/x^\alpha)-r\alpha/x$$ and $$h_t^{\prime\prime}(x)=-\alpha(((2\alpha+1)t^2/x^{2\alpha+2})\psi^{\prime\prime}(t/x^\alpha)+(\alpha t^3/x^{3\alpha+2})\psi^{\prime\prime\prime}(t/x^\alpha)-r/x^2).$$ We shall use a representation
\begin{multline}\label{eq:decomp}
\sum_{k=1}^{\lfloor t^\beta\rfloor} \big(\psi(t/k^\alpha)-(t/k^\alpha)\psi^\prime(t/k^\alpha)\big)=\Big(\sum_{k=1}^{\lfloor t^\beta\rfloor}h_t(k)-r\int_1^{\lfloor t^\beta\rfloor}\log (t/x^\alpha){\rm d}x\Big)\\+r\Big(\int_1^{\lfloor t^\beta\rfloor}\log (t/x^\alpha){\rm d}x-\sum_{k=1}^{\lfloor t^\beta\rfloor}\log (t/k^\alpha)\Big)=:C_1(t)+C_2(t).
\end{multline}
By Stirling's formula,
\begin{align*}
C_2(t)
&=r\Big(-\log t+\alpha\Big(\sum_{k=1}^{\lfloor t^\beta\rfloor}\log k-\int_1^{\lfloor t^\beta\rfloor}\log x\,{\rm d}x\Big)\Big)\\
&=-r\log t+r\alpha\Big((\lfloor t^\beta\rfloor+1/2)\log (\lfloor t^\beta\rfloor)-\lfloor t^\beta\rfloor+(1/2)\log (2\pi)\Big)
+o(1)\\
&\hspace{1cm}-\lfloor t^\beta\rfloor \log (\lfloor t^\beta\rfloor)+\lfloor t^\beta\rfloor-1)\\
&=-r\log t+(r\alpha/2)\log (\lfloor t^\beta\rfloor)+r\alpha((1/2)\log (2\pi)-1)+o(1),\quad t\to\infty.
\end{align*}
Next, applying formula \eqref{eq:EulMas finite} with $f=h_t$, $m=1$ and $n=\lfloor t^\beta\rfloor$ and recalling that
$h_t(x)=f_t(x)+r\log(t/x^\alpha)$, we conclude that 
\begin{equation*}
C_1(t)=\int_1^{\lfloor t^\beta\rfloor}f_t(x){\rm d}x+(h_t(1)+h_t(\lfloor t^\beta\rfloor))/2+(h_t^\prime(\lfloor t^\beta\rfloor)-h_t^\prime(1))/12+R_2(t),
\end{equation*}
where $R_2(t)\leq (1/12)\int_1^{\lfloor t^\beta \rfloor}|h_t^{\prime\prime}(x)|{\rm d}x$. By Lemma \ref{lem:aux2}, as $t\to\infty$,
$$
h_t(1)=\psi(t)-t\psi^\prime(t)+r\log t
=r+\log\left(\lambda\Gamma(r+1)\right)+o(1)
$$
and
$$
h^\prime_t(1)=\alpha(t^2\psi^{\prime\prime}(t)-r)=o(1).
$$
Further, 
by Lemma \ref{lem:at zero}, as $t\to\infty$,
\begin{align*}
h_t(\lfloor t^\beta\rfloor)&=\psi(t/(\lfloor t^\beta\rfloor)^\alpha)-(t/(\lfloor t^\beta\rfloor)^\alpha)\psi^\prime(t/(\lfloor t^\beta\rfloor)^\alpha)+r(\log t-\alpha \log (\lfloor t^\beta\rfloor))\\
&=r(\log t-\alpha \log (\lfloor t^\beta\rfloor))+o(1)
\end{align*}
and $$h^\prime_t(\lfloor t^\beta\rfloor)=(\alpha t^2/(\lfloor t^\beta\rfloor)^{1+2\alpha})\psi^{\prime\prime}(t/(\lfloor t^\beta\rfloor)^\alpha)-r\alpha/\lfloor t^\beta\rfloor=o(1).$$ The relation $R_2(t)=o(1)$ as $t\to\infty$ follows from
\begin{multline*}
\int_1^\infty |h_t^{\prime\prime}(x)|{\rm d}x=t^{-1/\alpha}\int_1^\infty\big|(2\alpha+1)(t/x^\alpha)^{1+1/\alpha}(-\alpha t/x^{\alpha+1})\psi^{\prime\prime}(t/x^\alpha)\\+\alpha (t/x^\alpha)^{2+1/\alpha}(-\alpha t/x^{\alpha+1})\psi^{\prime\prime\prime}(t/x^\alpha)-r(t/x^\alpha)^{-1+1/\alpha}(-\alpha/x^{\alpha+1})\big|{\rm d}x\\=t^{-1/\alpha}\int_0^t \big|(2\alpha+1)y^{1+1/\alpha}\psi^{\prime\prime}(y)+\alpha y^{2+1/\alpha}\psi^{\prime\prime\prime}(y)-ry^{-1+1/\alpha}\big|{\rm d}y=o(1),\quad t\to\infty
\end{multline*}
because the integrand is $o(y^{-1+1/\alpha})$ by Lemma \ref{lem:aux2}.

Assume that $\alpha=1$. Since $f_t(x)=(xg_t(x))^\prime$, where $g_t(x)=\psi(t/x)$, we obtain
\begin{align*}
\int_1^{\lfloor t^\beta\rfloor}f_t(x){\rm d}x
&=\lfloor t^\beta\rfloor g_t(\lfloor t^\beta\rfloor)-g_t(1)
=\lfloor t^\beta\rfloor \psi(t/\lfloor t^\beta\rfloor)-\psi(t)\\
&=-bt+r\log t-\log(\lambda\Gamma(r+1))+o(1),\quad t\to\infty
\end{align*}
having utilized Lemma \ref{lem:aux2} and $\lfloor t^\beta\rfloor \psi(t/\lfloor t^\beta\rfloor)\sim \me [\eta^2]t^{2-\beta}/2=o(1)$ as $t\to\infty$, see Lemma \ref{lem:at zero}. Combining fragments together we obtain \eqref{eq:first 11}.

Assume now that $\alpha\in (1/2,1)$. Mimicking the argument used at the beginning of the proof for the sum one can show that $\int_{\lfloor t^\beta\rfloor}^\infty f_t(x){\rm d}x=o(1)$, whence
$$
\int_1^{\lfloor t^\beta\rfloor} f_t(x){\rm d}x=\int_1^\infty f_t(x){\rm d}x+o(1),\quad t\to\infty.
$$
From the proof of Proposition \ref{prop:first part1} we know that $$\int_1^\infty f_t(x){\rm d}x=-\alpha^{-1}\psi(t)-\alpha^{-2}(1-\alpha)\kappa_\alpha t^{1/\alpha}+\alpha^{-2}(1-\alpha)t^{1/\alpha}\int_t^\infty y^{-1-1/\alpha}\psi(y){\rm d}y.$$ By Lemma \ref{lem:aux2}(a), as $t\to\infty$,
\begin{multline*}
t^{1/\alpha}\int_t^\infty y^{-1-1/\alpha}\psi(y){\rm d}y=t^{1/\alpha}\int_t^\infty y^{-1-1/\alpha}(by-r\log y+
\log(\lambda \Gamma(r+1))+o(1)){\rm d}y\\=b\alpha(1-\alpha)^{-1}t-r\alpha \log t-r\alpha^2+\alpha\log(\lambda\Gamma(r+1))+o(1).
\end{multline*}
To calculate the integral involving $\log$ we have used the fact that $z\mapsto \alpha^{-2}\eee^{-z/\alpha}z$, $z>0$ is a density of the gamma distribution with parameters $1/\alpha$ and $2$. The corresponding distribution tail is $z\mapsto \eee^{-z/\alpha}(z/\alpha+1)$, $z\geq 0$. Thus, $$\int_t^\infty y^{-1-1/\alpha}\log y\,{\rm d}y=\int_{\log t}^\infty \eee^{-z/\alpha}z\,{\rm d}z=\alpha^2 t^{-1/\alpha}(\alpha^{-1}\log t+1).$$ This together with another application of Lemma \ref{lem:aux2} yields
\begin{align*}
&\int_1^{\lfloor t^\beta\rfloor}f_t(x){\rm d}x\\
&\hspace{1cm}=-b\alpha^{-1}t+r\alpha^{-1}\log t-
\alpha^{-1}\log(\lambda\Gamma(r+1))-\alpha^{-2}(1-\alpha)\kappa_\alpha t^{1/\alpha}+b\alpha^{-1}t\\
&\hspace{2cm}-r(1-\alpha)\alpha^{-1}\log t-r(1-\alpha)+\alpha^{-1}(1-\alpha)\log(\lambda\Gamma(r+1))+o(1)\\
&\hspace{1cm}=-\alpha^{-2}(1-\alpha)\kappa_\alpha t^{1/\alpha}+r\log t-r(1-\alpha)-\log(\lambda\Gamma(r+1))+o(1),\quad t\to\infty.
\end{align*}
Collecting pieces together we arrive at \eqref{eq:first 21}.
\end{proof}

\subsection{Proof of Proposition \ref{prop:second part}}

Here is a slight extension of formula \eqref{eq:change2}: for any bounded measurable $g:\mr\to \mathbb{C}$
\begin{equation}\label{eq:change}
\me^{(t)}[g(\eta_k)]=\frac{\me [\eee^{tS(\alpha)}g(\eta_k)]}{\me [\eee^{tS(\alpha)}]}=\frac{\me [\eee^{t\eta_k/k^\alpha}g(\eta_k)]\prod_{j\neq k}\me [\eee^{t\eta_j/j^\alpha}]}{\me [\eee^{t\eta_k/k^\alpha}]\prod_{j\neq k}\me [\eee^{t\eta_j/j^\alpha}]}=\frac{\me [\eee^{t\eta_k/k^\alpha}g(\eta_k)]}{\me [\eee^{t\eta_k/k^\alpha}]},
\end{equation}
where the second equality is justified by independence of $\eta_1$, $\eta_2,\ldots$

\begin{lemma}\label{lem:convchar}
Under the assumptions of either Theorem \ref{thm:main1} or Theorem \ref{thm:main2}, 
\begin{equation}\label{eq:conv}
\lim_{t\to\infty}\me^{(t)}[\eee^{{\rm i}ut^{1-1/(2\alpha)}S^{(t)}_0(\alpha)}]=\eee^{-\sigma^2_\alpha u^2/2},\quad u\in\mr
\end{equation}
with $\sigma^2_\alpha$ as defined in \eqref{eq:sigma}.
\end{lemma}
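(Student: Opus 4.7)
The strategy is to exploit the fact that under $\mmp^{(t)}$ the variables $\eta_1,\eta_2,\ldots$ remain independent (as follows from taking $g$ of product form in \eqref{eq:change2}), use \eqref{eq:change} to factor the characteristic function, and then expand each factor to second order in a small parameter. Setting $v=v(t):=ut^{1-1/(2\alpha)}$, one gets
$$
\log\me^{(t)}\bigl[\eee^{\mathrm{i}vS_0^{(t)}(\alpha)}\bigr]
=\sum_{k\ge 1}\Delta_k(t),\qquad
\Delta_k(t):=\psi\Bigl(\frac{t+\mathrm{i}v}{k^\alpha}\Bigr)-\psi\Bigl(\frac{t}{k^\alpha}\Bigr)-\frac{\mathrm{i}v}{k^\alpha}\,\psi'\Bigl(\frac{t}{k^\alpha}\Bigr),
$$
the subtracted linear term coming from the centering $S_0^{(t)}(\alpha)=S(\alpha)-\sum_k k^{-\alpha}\psi'(t/k^\alpha)$. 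The goal is then to identify the limit of $\sum_k\Delta_k(t)$.

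The leading order of each $\Delta_k(t)$ comes from the quadratic term in the Taylor expansion of $\psi$ about $t/k^\alpha$, namely $-\tfrac{v^2}{2k^{2\alpha}}\psi''(t/k^\alpha)$. So the main analytic task is to prove
$$
t^{2-1/\alpha}\sum_{k\ge 1}k^{-2\alpha}\psi''(t/k^\alpha)\longrightarrow\sigma_\alpha^2,\qquad t\to\infty,
$$
which, multiplied by $-v^2/2$, yields the target $-u^2\sigma_\alpha^2/2$. For this I would mirror the Euler--Maclaurin approach used in the proofs of Propositions \ref{prop:asymp1}--\ref{prop:first part2}: split the sum at some $\lfloor t^\beta\rfloor$ with $\beta>2/(2\alpha-1)$, handle the tail via $\psi''(y)\to\me[\eta^2]$ as $y\to 0$ (Lemma \ref{lem:at zero}), and compare the head to $\int_1^{\lfloor t^\beta\rfloor}x^{-2\alpha}\psi''(t/x^\alpha)\,{\rm d}x$ using the decay estimates $y^2\psi''(y)=o(1)$ and $y^3|\psi'''(y)|=o(1)$ as $y\to\infty$ from Lemmas \ref{lem:aux1}(c) and \ref{lem:aux2}(c). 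The substitution $y=t/x^\alpha$ turns the resulting integral into $\alpha^{-1}t^{-2+1/\alpha}\int_0^\infty y^{1-1/\alpha}\psi''(y)\,{\rm d}y=\sigma_\alpha^2 t^{-2+1/\alpha}$, matching \eqref{eq:sigma}.

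The cleanest way to deal with the remainder is not to Taylor expand $\psi$ at complex arguments but rather to rewrite $\Delta_k(t)=\log\me^{(t)}[\eee^{\mathrm{i}vX_k/k^\alpha}]$ with $X_k:=\eta_k-\psi'(t/k^\alpha)$. Under $\mmp^{(t)}$, the $X_k$ are independent, mean zero, with $\me^{(t)}[X_k^2]=\psi''(t/k^\alpha)$, and $|X_k|\le 2b$ a.s.\ since $\eta_k\le b$ and $0\le\psi'(t/k^\alpha)\le b$. The elementary inequality $|\eee^{\mathrm{i}\xi}-1-\mathrm{i}\xi+\xi^2/2|\le |\xi|^3/6$ together with $\me^{(t)}[|X_k|^3]\le 2b\,\psi''(t/k^\alpha)$ gives
$$
\Bigl|\me^{(t)}\bigl[\eee^{\mathrm{i}vX_k/k^\alpha}\bigr]-1+\tfrac{v^2}{2k^{2\alpha}}\psi''(t/k^\alpha)\Bigr|\le \frac{b|v|^3}{3k^{3\alpha}}\psi''(t/k^\alpha).
$$
One then checks that the bracketed correction is uniformly $o(1)$ as $t\to\infty$ (using $s^2\psi''(s)\to 0$ at $s\to\infty$ and $\psi''(s)=O(1)$ at $s\to 0$ via the same change of variable $s=t/k^\alpha$), so $\Delta_k(t)$ equals its linearization plus the same cubic error up to a further $O(\cdot^2)$ term. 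Summing, the cubic contribution is controlled by $|v|^3 t^{-3/(2\alpha)}\cdot\alpha^{-1}\int_0^\infty s^3\psi''(s)\,{\rm d}s\cdot t^{-1+1/\alpha}$-type bounds (finite after splitting near $0$ and $\infty$), giving an overall $O(t^{-1/(2\alpha)})\to 0$ remainder.

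I expect the main obstacle to be the technical verification of Step~3 (the sum-to-integral passage with a $o(1)$ remainder), since one needs uniform control of $\psi''$ across the full range of arguments $t/k^\alpha$ from near $0$ (use Lemma \ref{lem:at zero}) to near $\infty$ (use Lemmas \ref{lem:aux1}(c), \ref{lem:aux2}(c)), and the Euler--Maclaurin error term from \eqref{eq:EulMas finite} must be shown to be $o(t^{-2+1/\alpha})$. The cubic remainder bound in Step~4 is then routine given the boundedness of $X_k$.
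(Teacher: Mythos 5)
Your overall architecture (factor the transform over $k$ by independence, identify the leading quadratic term $-\tfrac{v^2}{2}\sum_k k^{-2\alpha}\psi''(t/k^\alpha)$ with $v=ut^{1-1/(2\alpha)}$, show $t^{2-1/\alpha}\sum_k k^{-2\alpha}\psi''(t/k^\alpha)\to\sigma_\alpha^2$, and kill a cubic remainder of order $O(t^{-1/(2\alpha)})$) matches the paper's proof of \eqref{eq:variance} and \eqref{eq:lindeberg}. However, there is a genuine gap in your remainder estimate. You claim $|X_k|\le 2b$ a.s.\ because $\eta_k\le b$ and $0\le\psi'(t/k^\alpha)\le b$. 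But the hypotheses only bound $\eta$ \emph{from above}; $\eta$ may be unbounded below (only $\me[\eta^2]<\infty$ is assumed), so $X_k=\eta_k-\psi'(t/k^\alpha)$ has no a.s.\ lower bound, and the inequality $\me^{(t)}[|X_k|^3]\le 2b\,\psi''(t/k^\alpha)$ on which your whole cubic bound rests is unjustified. Worse, if $\me[|\eta|^3]=\infty$ the tilted third absolute moments $\me^{(t)}[|X_k|^3]$ blow up as the tilting parameter $t/k^\alpha\to 0$ (i.e.\ for large $k$), so this Lyapunov-type bound cannot be repaired by a constant. A fix along your lines would require a Lindeberg truncation, using $|\eee^{\mathrm{i}\xi}-1-\mathrm{i}\xi+\xi^2/2|\le\min(|\xi|^3/6,|\xi|^2)$ and splitting on $\{|X_k|\le M\}$ versus $\{|X_k|>M\}$ — exactly the device the paper employs in Lemma \ref{lem:bound at zero} for a different purpose.

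The paper avoids complex arguments and third moments of the tilted variables altogether: it proves convergence of the \emph{real} moment generating function $\me^{(t)}[\eee^{ut^{1-1/(2\alpha)}S_0(\alpha)}]\to\eee^{\sigma_\alpha^2u^2/2}$, where the exponent is exactly $\sum_k\big(\psi((t+v)/k^\alpha)-\psi(t/k^\alpha)-(v/k^\alpha)\psi'(t/k^\alpha)\big)$ with real $v$, so the mean value theorem expresses the remainder through $\psi'''$ at intermediate real points; the required bounds $|\psi'''(s)|=o(s^{-3})$ as $s\to\infty$ (Lemmas \ref{lem:aux1}(c), \ref{lem:aux2}(c)) and $s|\psi'''(s)|\to0$ as $s\to0+$ (Lemma \ref{lem:at zero}) hold without any lower bound on $\eta$. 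Weak convergence to the normal law then follows, and \eqref{eq:conv} is recovered via the L\'evy continuity theorem. Also note that for the leading term the paper uses direct Riemann integrability of $x\mapsto x^{-2\alpha}\psi''(x^{-\alpha})$ rather than Euler--Maclaurin; your Euler--Maclaurin route would work too but is heavier than needed, since here only the first-order limit of the Riemann sum is required, not an expansion with $o(1/t)$ precision.
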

\begin{proof}
It is enough to prove that, for each $u\in\mr$,
\begin{equation}\label{eq:conv4}
\lim_{t\to\infty}\me^{(t)}[\eee^{ut^{1-1/(2\alpha)}S^{(t)}_0(\alpha)}]=\eee^{\sigma^2_\alpha u^2/2}.
\end{equation}
Indeed, \eqref{eq:conv4} ensures that the $\mmp^{(t)}$-distributions of the variables $t^{1-1/(2\alpha)}S_0^{(t)}(\alpha)$ converge weakly as $t\to\infty$ to the centered normal distribution with variance $\sigma^2_\alpha$. Relation \eqref{eq:conv} then follows by the L\'{e}vy continuity theorem for characteristic functions.

Under $\mmp^{(t)}$, the random variables $\eta_1$, $\eta_2,\ldots$ are still independent but not identically distributed. The former follows from \eqref{eq:change2}, and the latter follows from \eqref{eq:change}. Anyway, under $\mmp^{(t)}$, the variable $S_0(\alpha)=S(\alpha)-\me^{(t)}S(\alpha)$ is an infinite sum of independent centered random variables with finite second moments. Using this in combination with \eqref{eq:change} yields  
\begin{multline*}
\me^{(t)}[\eee^{ut^{1-1/(2\alpha)}S_0(\alpha)}]=\prod_{k\geq 1}\me^{(t)}[\exp((ut^{1-1/(2\alpha)}/k^\alpha)(\eta_k-\me^{(t)}[\eta_k]))]\\=\prod_{k\geq 1}\frac{\me\exp(((t+ut^{1-1/(2\alpha)})/k^\alpha)\eta-(ut^{1-1/(2\alpha)}/k^\alpha)\psi^\prime(t/k^\alpha)}{\me [\eee^{t\eta/k^\alpha}]}\\=\exp\Big(\sum_{k\geq 1}\big(\psi((t+ut^{1-1/(2\alpha)})/k^\alpha)-\psi(t/k^\alpha)-(ut^{1-1/(2\alpha)}/k^\alpha)\psi^\prime(t/k^\alpha)\big)\Big).
\end{multline*}
By the mean value theorem for twice differentiable function we further obtain, for some $\theta_k=\theta_k(t,u)\in (0,1)$,
\begin{multline*}
\me^{(t)}[\eee^{ut^{1-1/(2\alpha)}S_0(\alpha)}]=\exp\Big((t^{2-1/\alpha}u^2/2)\sum_{k\geq 1}k^{-2\alpha}\psi^{\prime\prime}((t+\theta_k ut^{1-1/(2\alpha)})/k^\alpha)\Big)\\=\exp\Big((t^{2-1/\alpha}u^2/2)\sum_{k\geq 1}k^{-2\alpha}\psi^{\prime\prime}(t/k^\alpha)\Big)\\\times \exp\Big((t^{2-1/\alpha}u^2/2)\sum_{k\geq 1}k^{-2\alpha}(\psi^{\prime\prime}((t+\theta_k ut^{1-1/(2\alpha)})/k^\alpha)-\psi^{\prime\prime}(t/k^\alpha))\Big).
\end{multline*}
By the mean value theorem for differentiable functions, for some $\vartheta_k=\vartheta_k(t,u)\in (0,1)$,
$$\sum_{k\geq 1}k^{-2\alpha}(\psi^{\prime\prime}((t+\theta_k ut^{1-1/(2\alpha)})/k^\alpha)-\psi^{\prime\prime}(t/k^\alpha))=ut^{1-1/(2\alpha)}\sum_{k\geq 1}k^{-3\alpha}\psi^{\prime \prime\prime}((t+\vartheta_k ut^{1-1/(2\alpha)})/k^\alpha).$$ Thus, \eqref{eq:conv4} follows if we can prove that 
\begin{equation}\label{eq:variance}
\lim_{t\to\infty} t^{2-1/\alpha}\sum_{k\geq 1}k^{-2\alpha}\psi^{\prime\prime}(t/k^\alpha)=\sigma_\alpha^2\in (0,\infty)
\end{equation}
and that, for each fixed $u\in\mr$,  
\begin{equation}\label{eq:lindeberg}
\lim_{t\to\infty} t^{3-3/(2\alpha)}\sum_{k\geq 1}k^{-3\alpha}|\psi^{\prime\prime\prime}((t+\vartheta_kut^{1-1/(2\alpha)})/k^\alpha)|=0.
\end{equation}

\noindent {\sc Proof of \eqref{eq:variance}}. 
We intend to show that the function $h$ defined by $h(x):=x^{-2\alpha}\psi^{\prime\prime}(x^{-\alpha})$ is directly Riemann integrable (dRi) on $[0,\infty)$. The function $\psi^{\prime\prime}$ is continuous on $[0,\infty)$. In view of Lemma \ref{lem:aux1}(c), for some $C_1>C_2>0$, $\psi^{\prime\prime}(x^{-\alpha})\leq C_1x^{2\alpha}$ for $x\in (0,1]$ and $\psi^{\prime\prime}(x^{-\alpha})\leq C_2$ for $x>0$. Hence, the function $h$ is continuous and bounded on $[0,\infty)$ and $0\leq h(x)\leq h_1(x)$ for $x>0$, where $h_1(x):=C_1\1_{[0,1]}(x)+C_2x^{-2\alpha}\1_{(1,\infty)}(x)$ for $x\geq 0$. Being a Lebesgue integrable nonincreasing function on $[0,\infty)$, $h_1$ is dRi on $[0,\infty)$, hence, so is $h$.

As a consequence, $$\lim_{t\to\infty}t^{-1/\alpha}\sum_{k\geq 1}t^2k^{-2\alpha}\psi^{\prime\prime}(tk^{-\alpha})=\int_0^\infty x^{-2\alpha}\psi^{\prime\prime}(x^{-\alpha}){\rm d}x=\alpha^{-1}\int_0^\infty x^{1-1/\alpha}\psi^{\prime\prime}(x){\rm d}x=\sigma_\alpha^2\in (0,\infty),$$ which proves \eqref{eq:variance}. Observe that $\sigma_1^2=\int_0^\infty \psi^{\prime\prime}(x){\rm d}x=\lim_{s\to\infty}\psi^\prime(s)-\lim_{s\to 0}\psi^\prime(x)=b$.

\noindent {\sc Proof of \eqref{eq:lindeberg}}. Fix any $u\geq 0$. The proof for fixed $u<0$ is analogous. By Lemmas \ref{lem:aux1}(c) and \ref{lem:aux2}(c), for large enough $t_1>0$ there exists $c_1>0$ such that $|\psi^{\prime\prime\prime}(t)|\leq c_1 t^{-3}$ whenever $t\geq t_1$. Hence, for positive integer $k\leq (t/t_1)^{1/\alpha}\leq ((t+\vartheta_kut^{1-1/(2\alpha)})/t_1)^{1/\alpha}$ $$|\psi^{\prime\prime\prime}((t+\vartheta_k ut^{1-1/(2\alpha)})/k^\alpha)|\leq c_1k^{3\alpha}/(t+\vartheta_kut^{1-1/(2\alpha)})^3 \leq c_1k^{3\alpha}t^{-3}.$$ This entails $$t^{3-3/(2\alpha)}\sum_{k=1}^{(t/t_1)^{1/\alpha}} k^{-3\alpha}|\psi^{\prime\prime\prime}((t+\vartheta_kut^{1-1/(2\alpha)})/k^\alpha)|\leq c_1 t^{-3/(2\alpha)}(t/t_1)^{1/\alpha}~\to~0,\quad t\to\infty.$$ Put $t_2:=2t_1$. By Lemma \ref{lem:at zero}, there exists $c_2>0$ such that $|\psi^{\prime\prime\prime}(t)|\leq c_2 t^{-1}$ whenever $t\in (0, t_2]$. For large enough $t$ and any $k\in\mn$, $t+\vartheta_kut^{1-1/(2\alpha)}\leq 2t$. Hence, for such $t$ and $k\geq (2t/t_2)^{1/\alpha}\geq ((t+\vartheta_kut^{1-1/(2\alpha)})/t_2)^{1/\alpha}$ $$|\psi^{\prime\prime\prime}((t+\vartheta_kut^{1-1/(2\alpha)})/k^\alpha)|\leq c_2k^\alpha /(t+\vartheta_kut^{1-1/(2\alpha)}) \leq c_2k^{\alpha}t^{-1}.$$ As a consequence,
\begin{multline*}
t^{3-3/(2\alpha)}\sum_{k\geq (t/t_1)^{1/\alpha}} k^{-3\alpha}|\psi^{\prime\prime\prime}((t+\vartheta_kut^{1-1/(2\alpha)})/k^\alpha)|\leq c_2 t^{2-3/(2\alpha)}\sum_{k\geq (t/t_1)^{1/\alpha}}k^{-2\alpha}\\=t^{2-3/(2\alpha)}O(t^{1/\alpha-2})~\to~0,\quad t\to\infty.
\end{multline*}
This finishes the proof of \eqref{eq:lindeberg}.

The proof of Lemma \ref{lem:convchar} is complete. 
\end{proof}

We need a local limit theorem which resembles Stone's local limit theorem for standard random walks attracted to a Brownian motion. While Stone's theorem deals with intervals of fixed length $h$, we treat intervals of length $ht^{-1/2}$ as $t\to\infty$.
\begin{thm}\label{thm:Stone}
Assume that $\eta\leq b$ a.s., $\me[\eta]=0$ and
$\me[\eta^2] 
\in (0,\infty)$. Then, for each $h>0$, $$\lim_{t\to\infty}\sup_{x\in\mr} \big|t^{1/(2\alpha)}\mmp^{(t)}\{S_0(\alpha)\in (xt^{-1+1/(2\alpha)}, xt^{-1+1/(2\alpha)}+ht^{-1}]\}-hn_\alpha(x)|=0,$$ where $n_\alpha(x):=(2\pi\sigma_\alpha^2)^{-1/2}\eee^{-x^2/(2\sigma_\alpha^2)}$ for $x\in\mr$.
\end{thm}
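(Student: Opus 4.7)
After the change of variable $Y_t := t^{1-1/(2\alpha)}S_0^{(t)}(\alpha)$ and $\delta_t := ht^{-1/(2\alpha)}$, the statement becomes the shrinking-interval local limit theorem
\begin{equation*}
\sup_{x\in\mr}\big|\delta_t^{-1}\mmp^{(t)}\{Y_t\in(x,x+\delta_t]\}-n_\alpha(x)\big|\to 0,\quad t\to\infty.
\end{equation*}
The global CLT $\varphi_t(u):=\me^{(t)}[e^{iuY_t}]\to e^{-\sigma_\alpha^2u^2/2}$ is already in hand from Lemma~\ref{lem:convchar}, so the natural route is Fourier inversion via Esseen-type smoothing.

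Fix a probability density $K$ with smooth, compactly supported Fourier transform $\hat K$, set $K_\epsilon(y):=\epsilon^{-1}K(y/\epsilon)$, and sandwich $\1_{(x,x+\delta_t]}$ above and below by convolutions of slightly enlarged or reduced interval indicators with $K_\epsilon$. This squeezes $\mmp^{(t)}\{Y_t\in(x,x+\delta_t]\}$ between quantities of the form $(2\pi)^{-1}\int_{|u|\le 1/\epsilon}\hat G^\pm(u)\varphi_t(u)\,du$ with $|\hat G^\pm(u)|\le C(\delta_t\wedge|u|^{-1})|\hat K(\epsilon u)|$. Split the Fourier integration at $|u|\le A$ and $A<|u|\le 1/\epsilon$. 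On the compact band, Lemma~\ref{lem:convchar} and dominated convergence (using $|\varphi_t|\le 1$) deliver the Gaussian main term $\delta_t n_\alpha(x)+o(\delta_t)$, uniformly in $x\in\mr$.

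The principal obstacle is the uniform-in-$t$ control of the high-frequency tail, of the form
\begin{equation*}
\sup_{t\ge T_0}\int_{A}^{1/\epsilon}\frac{|\varphi_t(u)|}{|u|}\,du\;=\;o(\delta_t)
\end{equation*}
for a suitable coupling of $A$ and $\epsilon$ to $t$. From the factorization supplied by the proof of Lemma~\ref{lem:convchar},
\begin{equation*}
|\varphi_t(u)|^2=\prod_{k\ge 1}|\tilde\chi_k(ut^{1-1/(2\alpha)}k^{-\alpha})|^2,
\end{equation*}
where $\tilde\chi_k$ is the characteristic function of $\eta$ under the exponential tilt of parameter $t/k^\alpha$, together with the identity $1-|\tilde\chi_k(v)|^2=\tilde\me_k\otimes\tilde\me_k[1-\cos(v(\eta-\eta'))]$ and the elementary bound $1-\cos y\ge c(y^2\wedge 1)$, one derives a lower bound for $1-|\tilde\chi_k(v_k)|^2$ that behaves like $\psi''(t/k^\alpha)(v_k^2\wedge 1)$ in the `strong-tilt' regime $k\lesssim t^{1/\alpha}$ (where Lemmas~\ref{lem:aux1}(c) and \ref{lem:aux2}(c) supply the size of $\psi''$), and like $1-|\chi(v_k)|^2$ in the complementary `weak-tilt' regime $k\gtrsim t^{1/\alpha}$, where $\tilde\chi_k$ is close to the untilted characteristic function $\chi$ of $\eta$. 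A Riemann-sum comparison in the variable $\tau=k/t^{1/\alpha}$ identifies $k\sim t^{1/\alpha}$ as the transition scale both for the strength of the tilt and for $v_k\sim u$, and produces the required decay of $|\varphi_t|$ in $|u|$.

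Balancing $A$ and $\epsilon$ against $\delta_t$ yields the claimed uniform convergence. Uniformity in $x\in\mr$ is automatic because $x$ enters the Fourier expression only through the modulus-free phase $e^{-iux}$.
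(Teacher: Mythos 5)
Your overall architecture --- smoothing, Fourier inversion, a low/high frequency split, and exploiting the product structure of $\varphi_t(u)=\me^{(t)}[\eee^{{\rm i}uY_t}]$ --- is the same as the paper's, and the low-frequency part is fine. The gap is in the high-frequency estimate, which you correctly single out as the principal obstacle but whose proposed resolution would fail. First, the inequality $1-\cos y\ge c(y^2\wedge 1)$ is simply false: $1-\cos y$ vanishes at $y=2\pi$. More substantively, any argument extracting a fixed positive amount of decay from a single factor $1-|\tilde\chi_k(v_k)|^2$ with $|v_k|$ of order one breaks down when $\eta$ is lattice --- and the Rademacher case is squarely inside the hypotheses (it is the motivating example of the paper). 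For Rademacher $\eta$ one has $|\chi(v)|=|\cos v|$, which returns to $1$ at every multiple of $\pi$, and the tilted characteristic functions inherit this; so in your ``weak-tilt regime'' the factors with $v_k$ near a period contribute nothing, and a Riemann-sum comparison in $\tau=k/t^{1/\alpha}$ does not repair this.

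The paper's resolution (Lemma \ref{lem:bound at zero} together with the analysis of $I_1$ and $I_2$ in the proof of Theorem \ref{thm:Stone}) is to \emph{never} use factors with $|v_k|$ of order one: it proves $|\me^{(t)}[\eee^{{\rm i}v\eta_k}]|\le\eee^{-cv^2}$ uniformly over all tilts $t/k^\alpha\in[0,1]$ (i.e.\ all $k\ge t^{1/\alpha}$) but only for $|v|\le\rho$ with a fixed small $\rho$, and then checks that restricting the product to the indices with $|v_k|\le\rho$ still accumulates enough exponent: for $|u|\le\rho t^{1/(2\alpha)}$ every $k\ge t^{1/\alpha}$ qualifies and one gets $\exp\big(-cu^2t^{2-1/\alpha}\sum_{k\ge t^{1/\alpha}}k^{-2\alpha}\big)\le\eee^{-c'u^2}$, while for $|u|>\rho t^{1/(2\alpha)}$ one keeps only $k\ge(\rho^{-1}|u|t^{1-1/(2\alpha)})^{1/\alpha}$ and gets $\eee^{-c''|u|^{1/\alpha}}$. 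This second regime is unavoidable in your scheme as well, since the smoothing scale $\epsilon$ must be $o(\delta_t)$ and hence $1/\epsilon\gg t^{1/(2\alpha)}$, yet your sketch stops at the transition scale $|u|\sim t^{1/(2\alpha)}$. A smaller miscalibration: what is actually needed is $\lim_{A\to\infty}\limsup_{t\to\infty}\int_{|u|>A}|\varphi_t(u)|\,{\rm d}u=0$ against the uniformly bounded smoothing multiplier, not $\int_A^{1/\epsilon}|\varphi_t(u)|/|u|\,{\rm d}u=o(\delta_t)$, which is far stronger and not required. Finally, the Esseen sandwich can be dispensed with altogether: $\delta^{-1}\mmp^{(t)}\{S_0(\alpha)\in(x,x+\delta]\}$ is \emph{exactly} the density of $S_0(\alpha)-U_\delta$ with $U_\delta$ uniform on $(0,\delta)$, so uniform smoothing is exact and introduces no approximation error to control.
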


We first prove an auxiliary result.
\begin{lemma}\label{lem:bound at zero}
Under the assumptions of Theorem \ref{thm:Stone}, for all $t>0$, there exist positive constants $c$ and $\rho$ such that
$$\sup_{k\geq t^{1/\alpha}}\big|\me^{(t)}[\eee^{{\rm i}u\eta_k}]\big|\leq \eee^{-cu^2}$$ for all $u\in\mr$ satisfying $|u|\leq \rho$.
\end{lemma}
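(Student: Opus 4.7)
The plan is to exploit the change-of-measure identity \eqref{eq:change}, which gives
$$
\me^{(t)}[\eee^{{\rm i}u\eta_k}] = \frac{\me[\eee^{(t/k^\alpha)\eta}\eee^{{\rm i}u\eta}]}{\me[\eee^{(t/k^\alpha)\eta}]} =: \phi_s(u), \qquad s:=t/k^\alpha,
$$
so that $\phi_s$ is the characteristic function of $\eta$ under the exponential tilt with parameter $s$. The crucial observation is that as $k$ ranges over $\{k\in\mn : k\ge t^{1/\alpha}\}$, the tilt parameter $s$ lies in $(0,1]$, a set whose closure $[0,1]$ is compact \emph{independently of $t$}. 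Hence it suffices to exhibit a single $\rho>0$ and $c>0$ such that $|\phi_s(u)|\le \eee^{-cu^2}$ uniformly in $s\in[0,1]$ for $|u|\le\rho$.

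The standard symmetrization trick gives
$$
|\phi_s(u)|^2 = \me^{(s)}\otimes \me^{(s)}\bigl[\cos\bigl(u(\eta-\tilde\eta)\bigr)\bigr], \qquad 1-|\phi_s(u)|^2 = \me^{(s)}\otimes \me^{(s)}\bigl[1-\cos\bigl(u(\eta-\tilde\eta)\bigr)\bigr],
$$
where $\tilde\eta$ is an independent copy of $\eta$ under the tilted law. Since $\eta\le b$ a.s., the tilted law has the same support as the original one, so $|\eta-\tilde\eta|\le 2b$ a.s. Choosing $\rho:=\pi/(2b)$ ensures $|u(\eta-\tilde\eta)|\le \pi$ for $|u|\le\rho$, and the elementary inequality $1-\cos y\ge 2y^2/\pi^2$ on $[-\pi,\pi]$ then yields
$$
1-|\phi_s(u)|^2 \ge \frac{2u^2}{\pi^2}\me^{(s)}\bigl[(\eta-\tilde\eta)^2\bigr] = \frac{4u^2}{\pi^2}\psi''(s).
$$

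To finish, I need a uniform lower bound $\inf_{s\in[0,1]}\psi''(s)>0$. This follows directly from the standing assumptions: the function $\psi''$ is continuous on $[0,\infty)$ with $\psi''(0)=\me[\eta^2]>0$ and $\psi''(s)>0$ throughout $[0,\infty)$ (as recorded in the introduction under the standing assumptions, and verified at $s=0$ by Lemma~\ref{lem:at zero}). Compactness of $[0,1]$ then gives $\delta:=\inf_{s\in[0,1]}\psi''(s)>0$. Shrinking $\rho$ if necessary so that $4\delta u^2/\pi^2\le 1/2$ on $|u|\le\rho$, and using $1-x\le \eee^{-x}$, we obtain $|\phi_s(u)|^2\le 1-4\delta u^2/\pi^2\le \eee^{-4\delta u^2/\pi^2}$, whence the claim with $c:=2\delta/\pi^2$.

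The argument is largely routine, so there is no single hard step; the main conceptual point — and the place where one could go wrong — is recognizing that the supremum over $k\ge t^{1/\alpha}$ is really a supremum over the \emph{$t$-free} compact parameter range $s\in(0,1]$, which is what makes the constants $c$ and $\rho$ independent of $t$. Everything else is a standard bounded-support characteristic-function estimate.
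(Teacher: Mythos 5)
Your overall strategy is sound and its key structural point agrees with the paper's proof: via \eqref{eq:change} the problem reduces to bounding the tilted characteristic function $\phi_s(u)=\me[\eee^{(s+{\rm i}u)\eta}]/\me[\eee^{s\eta}]$ uniformly over the $t$-free range $s\in[0,1]$, and the uniform positivity $\inf_{s\in[0,1]}\psi''(s)>0$ is exactly the quantity the paper also isolates. However, there is a genuine gap in the middle step. You assert that ``the tilted law has the same support as the original one, so $|\eta-\tilde\eta|\le 2b$ a.s.'' This is false in general: the standing assumption is only $\eta\le b$ a.s., i.e.\ $\eta$ is bounded \emph{from the right}; it may be unbounded from below (the abstract and Theorem \ref{thm:Stone} require only $\me[\eta^2]<\infty$). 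Consequently $\eta-\tilde\eta$ need not lie in $[-2b,2b]$, the choice $\rho=\pi/(2b)$ does not force $|u(\eta-\tilde\eta)|\le\pi$, and the pointwise inequality $1-\cos y\ge 2y^2/\pi^2$ cannot be applied as you do. The resulting bound $1-|\phi_s(u)|^2\ge \tfrac{4u^2}{\pi^2}\psi''(s)$ is therefore unjustified.

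The symmetrization route can be repaired, but it costs the extra work you skipped: truncate, writing
$$
1-|\phi_s(u)|^2\ \ge\ \me^{(s)}\otimes\me^{(s)}\bigl[(1-\cos(u(\eta-\tilde\eta)))\1_{\{|\eta-\tilde\eta|\le M\}}\bigr]\ \ge\ \frac{2u^2}{\pi^2}\,\me^{(s)}\otimes\me^{(s)}\bigl[(\eta-\tilde\eta)^2\1_{\{|\eta-\tilde\eta|\le M\}}\bigr]
$$
for $|u|\le\pi/M$, and then show that for $M$ large enough the truncated symmetrized second moment is bounded below uniformly in $s\in[0,1]$. This is doable because $2\psi''(s)\ge 2\delta>0$ and the discarded tail $\me^{(s)}\otimes\me^{(s)}[(\eta-\tilde\eta)^2\1_{\{|\eta-\tilde\eta|>M\}}]$ tends to $0$ as $M\to\infty$ uniformly in $s\in[0,1]$, since the tilting densities satisfy $\eee^{s\eta}/\me[\eee^{s\eta}]\le \eee^{b}/\inf_{s\in[0,1]}\me[\eee^{s\eta}]$. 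The paper instead handles the one-sided boundedness directly by Taylor-expanding $\me[\eee^{({\rm i}u+s)\eta}]$ and controlling the remainder with the inequality $|\eee^{{\rm i}x}-1-{\rm i}x+2^{-1}x^2|\le\min(|x|^3/6,|x|^2)$ together with a truncation at $|\eta|\le|u|^{-\kappa}$ — the very device your argument is missing. As written, your proof only covers the case where $\eta$ is bounded on both sides.
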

\begin{proof}
Fix any $u\in\mr$. Using formula \eqref{eq:change} with $g(x)=\eee^{{\rm i}ux}$ we infer $$\big|\me^{(t)}[\eee^{{\rm i}u\eta_k}]\big|=\frac{\big|\me [\eee^{({\rm i}u+t/k^\alpha)\eta_k}]\big|}{\me [\eee^{t\eta_k/k^\alpha}]}$$ and thereupon $$\sup_{k\geq t^{1/\alpha}}\big|\me^{(t)}[\eee^{{\rm i}u\eta_k}]\big|\leq \sup_{s\in [0,1]}\frac{\big|\me [\eee^{({\rm i}u+s)\eta}]\big|}{\me [\eee^{s\eta}]}.$$

Write $$\me [\eee^{({\rm i}u+s)\eta}]=\me [\eee^{s\eta}]+{\rm i}u\me [\eta \eee^{s\eta}]-2^{-1}u^2\me [\eta^2\eee^{s\eta}]+\me [\eee^{s\eta}(\eee^{{\rm i}u\eta}-1-{\rm i}u\eta+2^{-1}u^2\eta^2)].$$ By Lemma 3.3.7 on p.~115 in \cite{Durrett:2010},
\begin{equation}\label{eq:durr}
\big|\eee^{{\rm i}x}-1-{\rm i}x+ 2^{-1}x^2\big|\leq \min(|x|^3/6, |x|^2),\quad x\in\mr.
\end{equation}
As a consequence, 
for any $\kappa\in (0,1)$,
\begin{multline*}
\big|\me [\eee^{s\eta}(\eee^{{\rm i}u\eta}-1-{\rm i}u\eta+2^{-1}u^2\eta^2)]\big|\leq 6^{-1}|u|^3\me [|\eta|^3\eee^{s\eta}\1_{\{|\eta|\leq |u|^{-\kappa}\}}]+u^2 \me[\eta^2\eee^{s\eta}\1_{\{|\eta|>|u|^{-\kappa}\}}]\\\leq 6^{-1}|u|^{3-\kappa}\me [\eta^2\eee^{s\eta}]+u^2 \me[\eta^2\eee^{s\eta}\1_{\{|\eta|>|u|^{-\kappa}\}}].
\end{multline*}
Observe that $1/A:=\inf_{s\in [0,1]}\me [\eee^{s\eta}]\in (0,\infty)$ and that $\eee^{s\eta}\leq \eee^{sb}$ a.s. Hence,
\begin{equation*}
\sup_{s\in [0,1]}\frac{\big|\me [\eee^{s\eta}(\eee^{{\rm i}u\eta}-1-{\rm i}u\eta+2^{-1}u^2\eta^2)]\big|}{\me [\eee^{s\eta}]}\leq A\eee^b (6^{-1}|u|^{3-\kappa}\me [\eta^2]+u^2 \me[\eta^2\1_{\{|\eta|>|u|^{-\kappa}\}}].
\end{equation*}
This proves
\begin{equation}\label{eq:inter1}
\lim_{u\to 0} u^{-2}\sup_{s\in [0,1]}\,\frac{\big|\me [\eee^{s\eta}(\eee^{{\rm i}u\eta}-1-{\rm i}u\eta+2^{-1}u^2\eta^2)]\big|}{\me [\eee^{s\eta}]}=0.
\end{equation}
Further, with $\varphi(s):=\me [\eee^{s\eta}]$,
\begin{multline*}
\frac{\me [\eee^{s\eta}]+{\rm i}u\me [\eta \eee^{s\eta}]-2^{-1}u^2\me [\eta^2\eee^{s\eta}]}{\me [\eee^{s\eta}]}\eee^{-{\rm i}u \psi^\prime(s)}\\=\Big(1+{\rm i}u\psi^\prime(s)-2^{-1}u^2\frac{\varphi^{\prime\prime}(s)}{\varphi(s)}\Big)\big(1-{\rm i}u\psi^\prime(s)-2^{-1}u^2 (\psi^\prime(s))^2\big)\\+\Big(1+{\rm i}u\psi^\prime(s)-2^{-1}u^2\frac{\varphi^{\prime\prime}(s)}{\varphi(s)}\Big)(\eee^{-{\rm i}u\psi^\prime(s)}-1+{\rm i}u\psi^\prime(s)+2^{-1}u^2 (\psi^\prime(s))^2)=:K_1(s,u)+K_2(s,u).
\end{multline*}
Since $\me[\eta]=0$, the function $\psi$ is nondecreasing on $[0,\infty)$, whence $\psi^\prime(s)\geq 0$ for $s\geq 0$. In view of \eqref{eq:durr}, $$\big|\eee^{-{\rm i}u\psi^\prime(s)}-1+{\rm i}u\psi^\prime(s)+2^{-1}u^2 (\psi^\prime(s))^2\big|\leq 6^{-1}|u|^3 (\psi^\prime(s))^3$$ and thereupon 
$$\lim_{u\to 0}u^{-2}\sup_{s\in [0,1]}|K_2(s,u)|=0.$$ Observe that $$\frac{\varphi^{\prime\prime}(s)}{\varphi(s)}-(\psi^\prime(s))^2=\psi^{\prime\prime}(s)$$ and that $\psi^{\prime\prime}(s)\geq 0$ by convexity of $\psi$. With this at hand, we conclude that $$K_1(s,u)=1-2^{-1}u^2 \psi^{\prime\prime}(s)+2^{-1}{\rm i}u^3\psi^\prime(s)\psi^{\prime\prime}(s)+4^{-1}u^4 (\psi^\prime(s))^2\frac{\varphi^{\prime\prime}(s)}{\varphi(s)}$$ and that $$\lim_{u\to 0}u^{-2}\sup_{s\in [0,1]}|K_1(s,u)-1+2^{-1}u^2\psi^{\prime\prime}(s)|=0.$$
The function $\psi$ is strictly log-convex on $[0,\infty)$ with $\psi^{\prime\prime}(0)=\me[\eta^2]$. This entails $1/B:=\inf_{s\in [0,1]}\psi^{\prime\prime}(s)\in (0,\infty)$.
Thus,
\begin{multline*}
\sup_{s\in [0,1]}\Big|\frac{\me [\eee^{s\eta}]+{\rm i}u\me [\eta \eee^{s\eta}]-2^{-1}u^2\me [\eta^2\eee^{s\eta}]}{\me [\eee^{s\eta}]}\Big|=\sup_{s\in [0,1]}\Big|\frac{\me [\eee^{s\eta]}+{\rm i}u\me [\eta \eee^{s\eta}]-2^{-1}u^2\me [\eta^2\eee^{s\eta}]}{\me [\eee^{s\eta}]}\eee^{-{\rm i}u \psi^\prime(s)}\Big|\\ \leq \sup_{s\in [0,1]}|K_1(s,u)-1+2^{-1}u^2\psi^{\prime\prime}(s)|+\sup_{s\in[0,1]}|1-2^{-1}u^2\psi^{\prime\prime}(s)|+\sup_{s\in [0,1]}|K_2(s,u)|\\\leq 1-2^{-1}u^2 \inf_{s\in [0,1]}\psi^{\prime\prime}(s)+o(u^2),\quad u\to 0.
\end{multline*}
This in combination with \eqref{eq:inter1} shows that
\begin{equation*}
\sup_{s\in [0,1]}\frac{\big|\me [\eee^{({\rm i}u+s)\eta}]\big|}{\me [\eee^{s\eta}]}\leq 1-(2B)^{-1}u^2+o(u^2),\quad u\to 0,
\end{equation*}
where the term $o(u^2)$ is uniform in $s\in [0,1]$. In particular, there exists $u_0>0$ such that $|o(u^2)|\leq (4B)^{-1}u^2$ whenever $|u|\leq u_0$. Hence,
\begin{equation*}
\sup_{s\in [0,1]}\frac{\big|\me [\eee^{({\rm i}u+s)\eta}]\big|}{\me [\eee^{s\eta}]}\leq 1-(4B)^{-1}u^2\leq \eee^{-u^2/(4B)}
\end{equation*}
whenever $|u|\leq \min (u_0, (2B)^{1/2})$.
\end{proof}

We are ready to prove Theorem \ref{thm:Stone}.

\begin{proof}[Proof of Theorem \ref{thm:Stone}]
For $x\in\mr$, $\delta>0$ and $t>0$, put $$v_{t,\,\delta}(x):=\delta^{-1}\mmp^{(t)}\{S_0(\alpha)\in (x, x+\delta]\}.$$ Observe that $v_{t,\,\delta}$ is a 
density of the $\mmp^{(t)}$-distribution of $S_0(\alpha)-U_\delta$, where $S_0(\alpha)$ and $U_\delta$ are $\mmp^{(t)}$-independent, and $U_\delta$ has a uniform distribution on $(0,\delta)$. The corresponding characteristic function is
\begin{equation}\label{eq:char}
\int_\mr \eee^{{\rm i}zx}v_{t,\,\delta}(x){\rm d}x=\me^{(t)}\big[\eee^{{\rm i}zS_0(\alpha)}\big]\frac{1-\eee^{-{\rm i}\delta z}}{{\rm i}\delta z},\quad z\in\mr.
\end{equation}
Later in the proof we shall show that this characteristic function is absolutely integrable on $\mr$. This entails
\begin{equation}\label{eq:basic}
v_{t,\,\delta}(x)=\frac{1}{2\pi}\int_\mr \eee^{-{\rm i}zx}\me^{(t)}\big[\eee^{{\rm i}zS_0(\alpha)}\big]\frac{1-\eee^{-{\rm i}\delta z}}{{\rm i}\delta z}{\rm d}z,\quad x\in\mr.
\end{equation}

Changing the variable $z=ut^{1-1/(2\alpha)}$ we obtain $$v_{t,\,\delta}(xt^{1/(2\alpha)-1})=\frac{t^{1-1/(2\alpha)}}{2\pi}\int_\mr \eee^{-{\rm i}ux}\me^{(t)}\big[\eee^{{\rm i}ut^{1-1/(2\alpha)}S_0(\alpha)}\big]\frac{1-\eee^{-{\rm i}\delta ut^{1-1/(2\alpha)}}}{{\rm i}\delta ut^{1-1/(2\alpha)}}{\rm d}u.$$ This yields
\begin{multline*}
t^{1/(2\alpha)}\mmp^{(t)}\{S_0(\alpha)\in (xt^{-1+1/(2\alpha)}, xt^{-1+1/(2\alpha)}+ht^{-1}]\}=ht^{-1+1/(2\alpha)}v_{t,\,ht^{-1}}(xt^{-1+1/(2\alpha)})\\=\frac{h}{2\pi}\int_\mr \eee^{-{\rm i}ux}\me^{(t)}\big[\eee^{{\rm i}ut^{1-1/(2\alpha)}S_0(\alpha)}\big]\frac{1-\eee^{-{\rm i}hut^{-1/(2\alpha)}}}{{\rm i}hut^{-1/(2\alpha)}}{\rm d}u.
\end{multline*}
Noting that $n_\alpha(x)=(2\pi)^{-1}\int_\mr \eee^{-{\rm i}xu}\eee^{-\sigma_\alpha^2 u^2/2}{\rm d}u$, it suffices to prove that, for any $A>0$,
\begin{equation}\label{eq:1}
\lim_{t\to\infty}\sup_{x\in\mr}\Big|\int_{-A}^A \eee^{-{\rm i}ux}\Big(\me^{(t)}\big[\eee^{{\rm i}ut^{1-1/(2\alpha)}S_0(\alpha)}\big]\frac{1-\eee^{-{\rm i}hut^{-1/(2\alpha)}}}{{\rm i}hut^{-1/(2\alpha)}}-\eee^{-\sigma_\alpha^2 u^2/2}\Big){\rm d}u\Big|=0
\end{equation}
and that
\begin{equation}\label{eq:2}
\lim_{A\to\infty}\limsup_{t\to\infty}\sup_{x\in\mr}\Big|\int_{|u|>A} \eee^{-{\rm i}ux}\me^{(t)}\big[\eee^{{\rm i}ut^{1-1/(2\alpha)}S_0(\alpha)}\big]\frac{1-\eee^{-{\rm i}hut^{-1/(2\alpha)}}}{{\rm i}hut^{-1/(2\alpha)}}{\rm d}u\Big|=0;
\end{equation}
\begin{equation}\label{eq:3}
\lim_{A\to\infty}\sup_{x\in\mr}\Big|\int_{|u|>A} \eee^{-{\rm i}ux} \eee^{-\sigma_\alpha^2 u^2/2}{\rm d}u\Big|=0.
\end{equation}

\noindent {\sc Proof of \eqref{eq:1}}. The supremum in \eqref{eq:1} does not exceed $$\int_{-A}^A \Big|\me^{(t)}\big[\eee^{{\rm i}ut^{1-1/(2\alpha)}S_0(\alpha)}\big]\frac{1-\eee^{-{\rm i}hut^{-1/(2\alpha)}}}{{\rm i}hut^{-1/(2\alpha)}}-\eee^{-\sigma_\alpha^2 u^2/2}\Big|{\rm d}u.$$
By Lemma \ref{lem:convchar}, $\lim_{t\to\infty} \me^{(t)}\big[\eee^{{\rm i}ut^{1-1/(2\alpha)}S_0(\alpha)}\big]=\eee^{-\sigma_\alpha^2 u^2/2}$ for $u\in\mr$. The characteristic functions $u\mapsto (1-\eee^{-{\rm i}hut^{-1/(2\alpha)}})/({\rm i}hut^{-1/(2\alpha)})$ converge as $t\to\infty$ to $r$ the characteristic function of degenerate at $0$ distribution ($r(u)=1$ for $u\in\mr$). Since the convergence of characteristic functions is locally uniform, the latter integral converges to $0$ as $t\to\infty$.

\noindent {\sc Proof of \eqref{eq:2}}. Let $\rho$ be as in Lemma \ref{lem:bound at zero}. Write, for large $t$,
\begin{multline*}
\sup_{x\in\mr}\Big|\int_{|u|>A} \eee^{-{\rm i}ux}\me^{(t)}\big[\eee^{{\rm i}ut^{1-1/(2\alpha)}S_0(\alpha)}\big]\frac{1-\eee^{-{\rm i}hut^{-1/(2\alpha)}}}{{\rm i}hut^{-1/(2\alpha)}}{\rm d}u\Big|\\\leq \int_{A<|u|\leq \rho t^{1/(2\alpha)}}\big|\me^{(t)}\big[\eee^{{\rm i}ut^{1-1/(2\alpha)}S_0(\alpha)}\big]\big|{\rm d}u+\int_{|u|>\rho t^{1/(2\alpha)}}\big|\me^{(t)}\big[\eee^{{\rm i}ut^{1-1/(2\alpha)}S_0(\alpha)}\big]\big|{\rm d}u\\=:I_1(t,A)+I_2(t).
\end{multline*}
Observe that $$\big|\me^{(t)}\big[\eee^{{\rm i}ut^{1-1/(2\alpha)}S_0(\alpha)}\big]\big|=\prod_{k\geq 1}\big|\me^{(t)}\big[\eee^{{\rm i}ut^{1-1/(2\alpha)}k^{-\alpha}(\eta_k-\me^{(t)}\eta_k)}\big]\big|=\prod_{k\geq 1}\big|\me^{(t)}\big[\eee^{{\rm i}ut^{1-1/(2\alpha)}k^{-\alpha}\eta_k}\big]\big|.$$

\noindent {\sc Analysis of $I_1$}. Put $c_2:=\inf_{t\geq 1} t^{2-1/\alpha}\sum_{k\geq t^{1/\alpha}}k^{-2\alpha}$ and note that $c_2>0$ because
$$\lim_{t\to\infty} t^{2-1/\alpha}\sum_{k\geq t^{1/\alpha}}k^{-2\alpha}=(2\alpha-1)^{-1}.$$ If $|u|\leq \rho t^{1/(2\alpha)}$, then $|u|t^{1-1/(2\alpha)}/k^\alpha\leq \rho$ for all $k\geq t^{1/\alpha}$ and, according to Lemma \ref{lem:bound at zero}, for $t\geq 1$, $$\big|\me^{(t)}\big[\eee^{{\rm i}ut^{1-1/(2\alpha)}S_0(\alpha)}\big]\big|\leq \prod_{k\geq t^{1/\alpha}}\big|\me^{(t)}\big[\eee^{{\rm i}ut^{1-1/(2\alpha)}k^{-\alpha}\eta_k}\big]\big|\leq \exp(-cu^2 t^{2-1/\alpha}\sum_{k\geq t^{1/\alpha}}k^{-2\alpha})\leq \eee^{-cc_2 u^2}.$$ Since the function $u\mapsto \eee^{-cc_2 u^2}$ is integrable on $\mr$ and, for large $t$, $I_1(t,A)\leq \int_{|u|>A}\eee^{-cc_2 u^2}{\rm d}u$, we conclude that
$\lim_{A\to\infty}\limsup_{t\to\infty}I_1(t,A)=0$.

\noindent {\sc Analysis of $I_2$}. If $|u|>\rho t^{1/(2\alpha)}$, then $|u|t^{1-1/(2\alpha)}/k^\alpha\leq \rho$ whenever $k^\alpha\geq \rho^{-1}|u|t^{1-1/(2\alpha)}>t$. Invoking Lemma \ref{lem:bound at zero} once again we obtain, for $t\geq 1$,
\begin{multline*}
\big|\me^{(t)}\big[\eee^{{\rm i}ut^{1-1/(2\alpha)}S_0(\alpha)}\big]\big|\leq \prod_{k\geq (\rho^{-1}|u|t^{1-1/(2\alpha)})^{1/\alpha}}\big|\me^{(t)}\big[\eee^{{\rm i}ut^{1-1/(2\alpha)}k^{-\alpha}\eta_k}\big]\big|\\\leq \exp\Big(-cu^2 t^{2-1/\alpha}\sum_{k\geq (\rho^{-1}|u|t^{1-1/(2\alpha)})^{1/\alpha}}k^{-2\alpha})\leq \eee^{-cc_2\rho^{2-1/\alpha}|u|^{1/\alpha}t^{1/\alpha-1/(2\alpha^2)}}\leq \eee^{-cc_2\rho^{2-1/\alpha}|u|^{1/\alpha}}.
\end{multline*}
Thus, for $t\geq 1$,
$I_2(t)\leq \int_{|u|> \rho t^{1/(2\alpha)}}\eee^{-cc_2\rho^{2-1/\alpha}|u|^{1/\alpha}}{\rm d}u$ and thereupon $\lim_{t\to\infty} I_2(t)=0$.

The proof of relation \eqref{eq:3} is trivial, hence omitted. The proof of \eqref{eq:2} is complete. We note in passing that while dealing with \eqref{eq:2} we have shown that the characteristic function given in \eqref{eq:char} is absolutely integrable, thereby justifying \eqref{eq:basic}.

The proof of Theorem \ref{thm:Stone} is complete.
\end{proof}

\begin{proof}[Proof of Proposition \ref{prop:second part}]
Fix any $h\in (0,1)$ and write
\begin{multline*}
t^{1/(2\alpha)}\me^{(t)}\big[\eee^{-tS_0(\alpha)}\1_{\{S_0(\alpha)>0\}}\big]=t^{1/(2\alpha)}\int_{(0, \lfloor t^{1/(4\alpha)}\rfloor ht^{-1}]}\eee^{-tx}{\rm d}\mmp^{(t)}\{S_0(\alpha)\leq x\}\\+t^{1/(2\alpha)}\int_{(\lfloor t^{1/(4\alpha)}\rfloor ht^{-1}, \infty)}\eee^{-tx}{\rm d}\mmp^{(t)}\{S_0(\alpha)\leq x\}.
\end{multline*}
The second term is dominated by $t^{1/(2\alpha)}\eee^{-\lfloor t^{1/(4\alpha)}\rfloor h}$ and as such is negligible as $t\to\infty$. The first term is equal to
\begin{multline*}
t^{1/(2\alpha)}\sum_{k=0}^{\lfloor t^{1/(4\alpha)}\rfloor-1}\int_{(kh/t, (k+1)h/t]}\eee^{-tx}{\rm d}\mmp^{(t)}\{S_0\leq x\}\\\leq \sum_{k=0}^{\lfloor t^{1/(4\alpha)}\rfloor-1} \eee^{-kh}\big(t^{1/(2\alpha)}\mmp^{(t)}\{S_0(\alpha)\in (kh/t, (k+1)h/t]\}-hn_\alpha(kh t^{-1/(2\alpha)})\big)\\+h\sum_{k=0}^{\lfloor t^{1/(4\alpha)}\rfloor-1} \eee^{-kh}n_\alpha(kh t^{-1/(2\alpha)}).
\end{multline*}
By Theorem \ref{thm:Stone} with $x=kht^{-1/(2\alpha)}$, the first term on the right-hand side vanishes as $t\to\infty$. Given $\varepsilon>0$, $|n_\alpha(kh t^{-1/(2\alpha)})-n_\alpha(0)|\leq \varepsilon$ for all positive integers $k\leq \lfloor t^{1/(4\alpha)}\rfloor-1$ and large enough $t$. By a standard reasoning it follows that $\lim_{t\to\infty} \sum_{k=0}^{\lfloor t^{1/(4\alpha)}\rfloor-1} \eee^{-kh}(n_\alpha(kh t^{-1/(2\alpha)})-n_\alpha(0))=0$. Thus, we have proved that
$$\limsup_{t\to\infty} t^{1/(2\alpha)}\me^{(t)}\big[\eee^{-tS_0(\alpha)}\1_{\{S_0(\alpha)>0\}}\big]\leq n_\alpha(0) h(1-\eee^{-h})^{-1}.$$ Letting $h$ tend to $0+$ we infer $$\limsup_{t\to\infty} t^{1/(2\alpha)}\me^{(t)}\big[\eee^{-tS_0(\alpha)}\1_{\{S_0(\alpha)>0\}}\big]\leq n_\alpha(0)=(2\pi\sigma_\alpha^2)^{-1/2}.$$ The converse inequality for the limit inferior follows analogously. The proof of Proposition \ref{prop:second part} is complete.
\end{proof}

\section{Proofs of Theorems \ref{thm:dens1} and \ref{thm:dens2}}

As we have already mentioned in the introduction, the distribution of $S(\alpha)$ is absolutely continuous with a smooth density $g_\alpha$. 
For $t>0$, define $g_\alpha^{(t)}$ by
$$
g_\alpha^{(t)}(x)=\frac{\eee^{tx}g_\alpha(x)}{\me[\eee^{tS(\alpha)}]}, 
\quad x\in\mathbb{R}
$$
or equivalently 
\begin{equation}\label{eq:Cramer-inverse}
g_\alpha(x)=\me[\eee^{tS(\alpha)}]\eee^{-tx}g^{(t)}_\alpha(x),\quad x\in\mathbb{R}.
\end{equation}
The so defined $g_\alpha^{(t)}$ is a density of the the $\mmp^{(t)}$-distribution of $S(\alpha)$. It follows from the proof of Theorem \ref{thm:Stone} that the characteristic function $z\mapsto \me^{(t)}[\eee^{{\rm i}zS_0(\alpha)}]$, $x\in\mr$ is absolutely integrable. Hence, an application of the Fourier inversion formula yields
\begin{align*}
g_\alpha^{(t)}(\me^{(t)}[S(\alpha)])
=\frac{1}{2\pi}
\int_\mr 
\me^{(t)}[\eee^{{\rm i}zS_0(\alpha)}]{\rm d}z.
\end{align*}
Substituting $z=ut^{1-1/(2\alpha)}$, we then have
\begin{align*}
g_\alpha^{(t)}(\me^{(t)}[S(\alpha)])
=\frac{t^{1-1/(2\alpha)}}{2\pi}
\int_\mr 
\me^{(t)}[\eee^{{\rm i}ut^{1-1/(2\alpha)}S_0(\alpha)}]{\rm d}u.
\end{align*}
Combining this representation with the equality
$$
\frac{1}{\sqrt{2\pi\sigma_\alpha^2}}
=\frac{1}{2\pi}\int_\mr 
\eee^{-\sigma_\alpha^2u^2/2}{\rm d}u,
$$
we infer
$$
t^{1/(2\alpha)-1}g_\alpha^{(t)}(\me^{(t)}[S(\alpha)])
=\frac{1}{\sqrt{2\pi\sigma_\alpha^2}}
+\frac{1}{2\pi}\int_\mr 
\left(\me^{(t)}[\eee^{{\rm i}ut^{1-1/(2\alpha)}S_0(\alpha)}]
-\eee^{-\sigma_\alpha^2u^2/2}\right){\rm d}u.
$$
Arguing as in 
the proof of Theorem~\ref{thm:Stone}, we obtain 
$$
\lim_{t\to\infty}
\int_\mr 
\left(\me^{(t)}[\eee^{{\rm i}ut^{1-1/(2\alpha)}S_0(\alpha)}]
-\eee^{-\sigma_\alpha^2u^2/2}\right){\rm d}u=0
$$
and thereupon 
$$
g_\alpha^{(t)}(\me^{(t)}[S(\alpha)])~\sim~\frac{t^{1-1/(2\alpha)}}{\sqrt{2\pi\sigma_\alpha^2}},\quad t\to\infty.
$$
\bigskip
Plugging this into \eqref{eq:Cramer-inverse} yields
$$
g_\alpha(\me^{(t)}[S(\alpha)])~\sim~\me[\eee^{tS(\alpha)}]\eee^{-t\me^{(t)}[S(\alpha)]}\frac{t^{1-1/(2\alpha)}}{\sqrt{2\pi\sigma_\alpha^2}},\quad t\to\infty.
$$
Using the same $t=t(x)$ as before, that is, a unique solution to $\me^{(t)}[S(\alpha)]=x$, see \eqref{eq:t(x)},
we obtain
$$
g_\alpha(x)~\sim~\me[\eee^{t(x)S(\alpha)}]\eee^{-xt(x)}\frac{(t(x))^{1-1/(2\alpha)}}{\sqrt{2\pi\sigma_\alpha^2}},\quad x\to\infty.
$$
By Proposition \ref{prop:second part} and the argument given in Section \ref{sec:structure},
$$
\mmp\{S(\alpha)>x\}~\sim~\me[\eee^{t(x)S(\alpha)}]\eee^{-xt(x)} \frac{(t(x))^{-1/(2\alpha)}}{\sqrt{2\pi\sigma_\alpha^2}},\quad x\to\infty,
$$
whence $$g_\alpha(x)~\sim~ t(x)\mmp\{S(\alpha)>x\},\quad x\to\infty.$$ With this at hand, the desired result is now secured by 
the already known asymptotics of $\mmp\{S(\alpha)>x\}$ given in Theorems \ref{thm:main1} and \ref{thm:main2} and the asymptotic of $t(x)$ given in Propositions \ref{prop:asymp1} and \ref{prop:asymp2}.   

\noindent \textbf{Acknowledgment.} The authors thank Zakhar Kabluchko for very fruitful discussions on the topic of the paper and for a pointer to \cite{Rice:1973}.\\
A part of this work was done while A.I. was visiting Bielefeld in June 2023. Grateful acknowledgment is
made for financial support and hospitality.

\end{document}